\documentclass[11pt]{amsart}
\usepackage[english]{babel}
\usepackage[utf8]{inputenc}
\usepackage[T1]{fontenc}
\usepackage[nomath]{lmodern}

\usepackage{amssymb,amsmath,amsthm,amsfonts}
\usepackage{thmtools,mathtools}
\usepackage{microtype}
\allowdisplaybreaks
\usepackage{float}

\usepackage[dvipsnames]{xcolor}
\usepackage{tikz,tikz-cd}
\usepackage{fmtcount}
\usepackage[colorlinks=true,citecolor=blue,urlcolor=blue,linkcolor=blue,breaklinks,pdfencoding=auto,psdextra,pagebackref]{hyperref}
\usepackage[margin=1in]{geometry}
\usepackage[normalem]{ulem}
\usepackage{nicematrix}

\addto\extrasenglish{}
\addto\extrasenglish{}
\addto\extrasenglish{\def\equationautorefname~#1\null{\textnormal{(#1)}\null}}
\declaretheorem[name=Theorem,refname={Theorem},style=plain,numberwithin=section]{theorem}
\declaretheorem[name=Proposition,refname={Proposition},style=plain,sibling=theorem]{proposition}
\declaretheorem[name=Lemma,refname={Lemma},style=plain,sibling=theorem]{lemma}
\declaretheorem[name=Conjecture,refname={Conjecture},style=plain,sibling=theorem]{conjecture}
\declaretheorem[name=Definition,refname={Definition},style=definition,sibling=theorem]{definition}
\declaretheorem[name=Remark,refname={Remark},style=definition,sibling=theorem]{remark}
\declaretheorem[name=Example,refname={Example},style=definition,sibling=theorem]{example}
\declaretheorem[name=Corollary,refname={Corollary},style=plain,sibling=theorem]{corollary}

\declaretheorem[name=Theorem,refname={Theorem},style=plain,numberwithin=section]{theoremLetter}

\declaretheorem[name=Corollary,refname={Corollary},style=plain,sibling=theoremLetter]{corollaryLetter}

\declaretheorem[name=Problem,refname={Problem},style=plain,sibling=theoremLetter]{problemLetter}

\newcounter{chr}
\loop\ifnum\value{chr}<27
  \expandafter\edef\csname b\Alph{chr}\endcsname{\noexpand\mathbf{\Alph{chr}}}
  \expandafter\edef\csname c\Alph{chr}\endcsname{\noexpand\mathcal{\Alph{chr}}}
  \expandafter\edef\csname f\Alph{chr}\endcsname{\noexpand\mathfrak{\Alph{chr}}}
\addtocounter{chr}{1}
\repeat

\def\m{\mathfrak m}
\def\p{\mathfrak p}

\def\tS{{\tilde S}}

\def\Aut{\operatorname{Aut}}
\def\Bl{\operatorname{Bl}}

\def\disc{\operatorname{disc}}

\def\div{\operatorname{div}}

\def\Gr{\operatorname{Gr}}
\def\Hilb{\operatorname{Hilb}}
\def\Hom{\operatorname{Hom}}

\def\Id{\operatorname{Id}}

\def\NS{\operatorname{NS}}

\def\OGr{\operatorname{OGr}}

\def\Pic{\operatorname{Pic}}

\def\rk{\operatorname{rk}}
\def\Sing{\operatorname{Sing}}
\def\SL{\operatorname{SL}}

\def\Spec{\operatorname{Spec}}
\def\Stab{\operatorname{Stab}}
\def\Supp{\operatorname{Supp}}
\def\Sym{\operatorname{Sym}}

\def\KKK{{\mathrm{K3}}}
\def\Kum{{\mathrm{Kum}}}

\let\ordexists\exists
\def\exists{\operatorname{\ordexists}}
\let\ordforall\forall
\def\forall{\operatorname{\ordforall}}

\def\set#1{{\left\{{#1}\right\}}}
\def\setmid#1#2{{\left\{{#1}\;\middle|\;{#2}\right\}}}

\def\tilde{\widetilde}

\def\setminus{\smallsetminus}

\def\git{/\!\!/}

\def\bw#1{{\mathchoice%
 {\textstyle{\bigwedge\mkern-4.5mu^{#1}\mkern1mu}}%
 {\textstyle{\bigwedge\mkern-4.5mu^{#1}\mkern1mu}}%
 {\scriptstyle{\bigwedge\mkern-5mu^{#1}}}%
 {\scriptscriptstyle{\bigwedge\mkern-5mu^{#1}}}%
}}

\def\longarrow#1#2{\mathchoice{#2}{#1}{#1}{#1}}
\def\to{\longarrow{\rightarrow}{\longrightarrow}}
\def\simto{\longarrow{\xrightarrow\sim}{\stackrel\sim\longrightarrow}}
\def\into{\longarrow{\hookrightarrow}{\lhook\joinrel\longrightarrow}}
\def\onto{\longarrow{\twoheadrightarrow}{\relbar\joinrel\twoheadrightarrow}}
\let\shortmapsto\mapsto
\def\mapsto{\longarrow{\shortmapsto}{\longmapsto}}

\def\Kum{{\mathrm{Kum}}}
\def\OG{{\mathrm{OG}}}

\def\SO{{\mathrm{SO}}}

\def\sm{{\mathrm{sm}}}
\def\CY{\mathrm{CY}}

\newcommand{\Syz}{\mathrm{Syz}}
\newcommand{\Pf}{\mathrm{Pf}}
\newcommand{\pf}{\mathrm{pf}}
\newcommand{\red}{\mathrm{red}}
\newcommand\Mcoble[1]{\cM_{\mathrm{Coble#1}}}
\newcommand{\coble}{\mathfrak{c}}

\begin{document}
\title{Coble type hypersurfaces and hyperkähler fourfolds}
\author[B.~Piroddi, Á.D.~Ríos~Ortiz, A.~Rojas, and J.~Song]{Benedetta Piroddi, Ángel David Ríos Ortiz, Andrés Rojas, and Jieao Song}
\date{\today}

\begin{abstract}
A classical result, already observed by Coble, asserts that a genus 2 Jacobian can be embedded in $\bP^8$ as the singular locus of a unique cubic hypersurface; similarly, the Kummer of a genus 3 Jacobian is embedded in $\bP^7$ as the singular locus of a unique quartic hypersurface.
We present a precise analogue of these results in the context of hyperkähler fourfolds:
for the general member in the $20$-dimensional locally complete families of polarized hyperkähler fourfolds of $\KKK^{[2]}$-type with squares $4$ or $6$ and divisibility~$1$,
we establish the existence of a unique \emph{Coble type hypersurface}.
As a consequence, we describe several geometric aspects of the corresponding moduli spaces.
\end{abstract}

\address{Benedetta Piroddi:
Fachrichtung Mathematik, Universität des Saarlandes \hfill \newline\texttt{}
\indent Universität Campus, Gebäude E2.4, 66123 Saarbrücken, Germany}
\email{{\tt piroddi@math.uni-sb.de}}

\address{Ángel David Ríos Ortiz: Université Paris Cité and Sorbonne Université, CNRS, IMJ-PRG \hfill\newline\texttt{}
\indent F-75013 Paris, France}
\email{{\tt riosortiz@imj-prg.fr}}

\address{Andrés Rojas: Departament de Matemàtiques i Informàtica, Universitat de Barcelona \hfill \newline\texttt{}
\indent Gran Via de les Corts Catalanes 585,
08007 Barcelona, Spain}
\email{{\tt andresrojas@ub.edu}}

\address{Jieao Song: Dipartimento di Matematica
``Federigo Enriques'',
Università degli Studi di Milano \hfill \newline\texttt{}
\indent Via Cesare Saldini 50, 20133 Milano, Italy}
\email{{\tt jieao.song@unimi.it}}

\maketitle

\section{Introduction}
A beautiful result on the projective geometry of abelian varieties, already observed by Coble in the early 20th century \cite{Coble:cremona, Coble:theta} (see also \cite{Barth,Laszlo,Beauville}), states that for a genus $2$ curve $C$, the complete linear system $|3\Theta|$ embeds its Jacobian $JC$ as the singular locus of a unique cubic hypersurface in $\bP^8$. Similarly, for a non-hyperelliptic genus $3$ curve $C$, the linear system $|2\Theta|$ on $JC$ embeds the Kummer variety $JC/\langle \pm1\rangle$ as the singular locus of a unique quartic hypersurface in $\bP^7$.
These \emph{Coble hypersurfaces} have far-reaching applications to abelian varieties and their moduli spaces, theta group and theta representations, and the geometry of moduli spaces of vector bundles on curves \cite{van-der-Geer,Narasimhan-Ramanan,Ortega,Nguyen,Gruson-Sam-Weyman,Gruson-Sam,Benedetti-Bolognesi-Faenzi-Manivel:Coble-quadric}.

The goal of this paper is to present a natural analogue of Coble's results in the context of hyperkähler fourfolds of K3$^{[2]}$-type, namely, those that are deformation equivalent to the Hilbert square of a K3 surface. 
Following \cite[Section~3.5]{Debarre:survey}, let us denote by $\cM^{(\gamma)}_{2d}$ the moduli space of polarized hyperkähler fourfolds $(X,H)$ of K3$^{[2]}$-type such that $q(H) = 2d$ and $\div(H) = \gamma$ with respect to the Beauville--Bogomolov--Fujiki quadratic form $q$ on $H^2(X,\bZ)$.
We recall that $\cM^{(\gamma)}_{2d}$ is an irreducible quasiprojective variety of dimension $20$ for $\gamma=1$ and all $2d\ge 2$, or $\gamma=2$ and $2d \equiv 6\pmod 8$. Moreover, the polarization of a general element in $\cM^{(\gamma)}_{2d}$ is very ample whenever $2d\geq 4$ (see \cite[Corollary~B]{projective-models} for $2d=4$ and \cite[Corollary~3.9]{Debarre:survey} for all other cases).

Our main results concern the two families $\cM_4^{(1)}$ and $\cM_6^{(1)}$ and read as follows.

\begin{theoremLetter}
\label{thm:coble-quartic}
For a general $(X,H)\in \cM_4^{(1)}$ and the corresponding embedding 
\[
X\into \bP^{9}=\bP H^0(X, H)^\vee,
\]
there exists a unique quartic hypersurface $Y\subset \bP^{9}$ such that, scheme theoretically, $X$ coincides with the singular locus of $Y$.
\end{theoremLetter}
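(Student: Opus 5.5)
The strategy is to find $Y$ as the unique quartic vanishing to order two along $X$, i.e.\ to show $h^0(\bP^9, I_X^2(4)) = 1$. We then check that its singular locus is exactly $X$ as a scheme.

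\textbf{Step 1: the expected count.} By Riemann--Roch for $\KKK^{[2]}$-type, $h^0(X,kH)=\binom{q(kH)/2+3}{2}=\binom{2k^2+3}{2}$. Hence $h^0(H)=10$, $h^0(2H)=55$, $h^0(3H)=210$, $h^0(4H)=595$. Projective normality of a general $X$ (known from the very-ampleness results cited) gives $h^0(I_X(2))=0$, $h^0(I_X(3))=220-210=10$ and $h^0(I_X(4))=715-595=120$. Quartics singular along $X$ are the kernel of
\[
H^0(I_X(4))\to H^0(N^\vee_X(4H)),
\]
and $N^\vee_X$ has rank $5$. I would compute $\chi(N_X^\vee(4H))$ from the Euler sequence and the conormal sequence $0\to N^\vee\to \Omega_{\bP^9}|_X\to\Omega_X\to 0$, using Hirzebruch--Riemann--Roch with $c_2(X)$ and the Fujiki relations $H^4=12$ and $c_2\cdot H^2=30\,q(H)=120$. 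I expect this to give a value of $119$, so that the naive count leaves a $1$-dimensional kernel. Existence of at least one such quartic then reduces to surjectivity onto the expected image, or more simply to a vanishing $h^1(I_X^2(4))=0$ together with an Euler characteristic count on $I_X^2(4)$.

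\textbf{Step 2: existence and uniqueness via a special member.} Both existence and uniqueness are semicontinuous statements: the condition $h^0(I_X^2(4))\ge 1$ is closed, while $\le 1$ is open. So it is natural to work with a special member of $\cM_4^{(1)}$, for instance a Hilbert square $S^{[2]}$ of a quartic K3 with a suitable polarization $2h-\delta$ type class. A convenient alternative is a degenerate model known from the literature on $\cM_4^{(1)}$, namely the double EPW-type or Iliev--Manivel description. In that model one can write down a quartic explicitly. The natural candidate is a discriminant-type quartic, built from the $10$ cubics containing $X$: a quartic of the form $\sum \partial$-combinations of those cubics, or a determinantal or Pfaffian expression in the syzygy matrix of $I_X$. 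Every quartic generated this way lies in $I_X^2$ automatically. Computing the minimal resolution of $I_X$ (cubics with linear syzygies) and using the self-dual/Gorenstein structure should produce a skew-symmetric $10\times 10$ linear-in-coordinates syzygy matrix. Its Pfaffian-type invariant then vanishes doubly on $X$, which is the classical Coble mechanism. For uniqueness I would check $h^0(I_X^2(4))=1$ on one explicit member, for example by computer algebra over a finite field. Irreducibility of $\cM_4^{(1)}$ then spreads both properties to the general member.

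\textbf{Step 3: the singular locus.} We need $\Sing Y = X$ scheme-theoretically, meaning the ideal generated by the ten partials $\partial_i F$ saturates to $I_X$. Since the partials are cubics in $I_X$, and $h^0(I_X(3))=10$, it suffices to show that the ten partials are linearly independent. That makes them span $H^0(I_X(3))$, and these cubics generate $I_X$ scheme-theoretically once we know $I_X$ is generated in degree $3$. Generation in degree $3$ follows from the resolution in Step 2. Linear independence of the partials fails only if $Y$ is a cone, and a cone would force $X$ to be degenerate; this is an open condition, so it too can be checked on the special member. The main obstacle I expect is Step 2: producing the quartic conceptually, or controlling the vanishing $h^1$ so that the $1$-dimensionality is an honest computation and not a heuristic. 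Failing a conceptual argument, I would fall back on the explicit computation on one example combined with semicontinuity.
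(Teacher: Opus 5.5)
Your plan has genuine gaps at both the existence step and the singular-locus step.

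\textbf{Existence.} The condition $h^0(\cI_X^2(4))\ge 1$ is \emph{closed}. So checking it on one special member and invoking irreducibility of $\cM_4^{(1)}$ does not give it for the general member.

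Your Euler-characteristic heuristic cannot fill this. In fact $\chi(\cN_X^\vee(4))=\chi(\cN_X)=139$, not $119$: one has $h^0(\cN_X)=119$, but $h^2(\cN_X)=20$, being the kernel of $H^3(\cT_X)\to H^3(\cT_{\bP^9}|_X)\cong\bC$. Hence $\chi(\cI_X^2(4))=-19$, and no Euler-characteristic count predicts the quartic.

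Nor can the quartic come from a skew-symmetric matrix of linear syzygies among the ten cubics. For general $X$ these cubics are the polars of $Y$, and they have no linear syzygy at all: the homogeneous ideal needs $20$ extra quartic generators (compare \autoref{cor:no-syzygy}).

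Your suggested special members also do not lie in $\cM_4^{(1)}$. On the Hilbert square of a quartic K3, the nef classes of square $4$ are not ample (this is the boundary divisor $\cD_4$), and double EPW sextics have square $2$.

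What works is the following. Specialize to $(S^{[2]},L_2-2\delta)$ with $S$ of genus $7$. There $H^0(\cI_{S^{[2]}}(3))=q\cdot H^0(\cO(1))$, which forces $h^0(\cI^2(4))=1$ (only $q^2$). The degeneracy-locus description inside $Q$ gives $h^1(\cI^2(4))=0$. Quartic normality and the sequence $0\to\cI_X^2(4)\to\cI_X(4)\to\cN_X^\vee(4)\to 0$ then give $h^0(\cI^2_X(4))+h^0(\cN^\vee_X(4))=120+h^1(\cI^2_X(4))$. At the special point $h^1(\cI_X^2(4))=0$ and $h^0(\cN_X^\vee(4))=119$, so upper semicontinuity of both forces $h^0(\cI_X^2(4))=1$ generically.

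\textbf{Singular locus.} Your Step 3 rests on a false claim: the homogeneous ideal of $X$ is \emph{not} generated in degree $3$ (it needs the $10$ cubics plus $20$ quartics). So linear independence of the partials says nothing about whether they cut out $X$ scheme-theoretically, and that is precisely the content of the theorem.

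Openness from a special point does not rescue this either. At $S^{[2]}$ the cubics only cut out the $8$-dimensional quadric $Q$, and on $\cD_4$ they cut out $\Sym^2\bP^3$.

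The missing idea is to use the first-order term of the family $F=q^2+tf_1+O(t^2)$.
\begin{itemize}
\item Since $x_i\partial_j(q^2)=x_j\partial_i(q^2)$, each $x_i\partial_jF-x_j\partial_iF$ is divisible by $t$. Hence the quartics $f_{ij}=x_i\partial_jf_1-x_j\partial_if_1$, and by Euler also $f_1$, vanish on the flat limit $\cZ_0$ of $\Sing(\cY_\eta)$.
\item The $f_{ij}$ are the $2\times 2$ minors of the Jacobian of $(q,f_1)$, so $S^{[2]}\subset\cZ_0\subset\Sing(Q\cap Z(f_1))$.
\item A general first-order deformation leaves $Q$, so $f_1\notin(q)$. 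Since $D_7(\varphi)$ is the unique quartic section of $Q$ singular along $S^{[2]}$, this gives $Q\cap Z(f_1)=D_7(\varphi)$, whose singular locus is exactly $S^{[2]}$.
\end{itemize}
Thus $\cZ_0=\cX_0$, and flatness of $\cZ$ together with a comparison of Hilbert polynomials yields $\Sing(\cY_\eta)=\cX_\eta$.
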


\begin{theoremLetter}
\label{thm:coble-cubic}
For a general $(X,H)\in \cM_6^{(1)}$ and the corresponding embedding 
\[
X\into \bP^{14}=\bP H^0(X, H)^\vee,
\]
there exists a unique cubic hypersurface $Y\subset \bP^{14}$ such that, scheme theoretically, $X$ coincides with the singular locus of $Y$.
\end{theoremLetter}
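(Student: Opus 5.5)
We prove the statement for $\cM_6^{(1)}$; the quartic case for $\cM_4^{(1)}$ runs in parallel with cubics replaced by quartics. The idea is to reduce existence and uniqueness of $Y$ to cohomology of the ideal sheaf of $X$. A hypersurface $Y=\{F=0\}$ of degree $3$ is singular along $X$ exactly when all partials $\partial F/\partial x_i$ vanish on $X$, that is, when $F\in H^0(\bP^{14},I_X^{2}(3))$. In characteristic zero Euler's formula then gives $F|_X=0$ automatically. So the first task is to show
\[
h^0(\bP^{14}, I_X^2(3))=1 .
\]

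To get this, we will use the dimension counts that follow from Riemann--Roch on a $\KKK^{[2]}$-type fourfold, $\chi(X,kH)=\binom{q(kH)/2+3}{2}$. For $q(H)=6$ this gives $h^0(H)=15$, $h^0(2H)=\binom{15}{2}=105$, and $h^0(3H)=\binom{30}{2}=435$. By Kodaira vanishing these equal the corresponding Euler characteristics. For a general member we expect $X$ to be projectively normal and cut out by quadrics. Then $h^0(I_X(2))=120-105=15$. Note this equals $h^0(H)$, which suggests a natural map from the space of quadrics through $X$ to $H^0(H)$, giving the structure of a Coble-type duality.

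Next we analyse cubics in $I_X^2$ through the exact sequence
\[
0\to I_X^2(3)\to I_X(3)\to N_X^\vee(3)\to 0 .
\]
This gives $H^0(I_X^2(3))=\ker\bigl(H^0(I_X(3))\to H^0(N_X^\vee(3))\bigr)$. We have $h^0(I_X(3))=680-435=245$. The task is to compute $h^0(N^\vee_X(3))$ and the rank of this map. Using the conormal sequence $0\to N^\vee_X\to \Omega_{\bP}|_X\to\Omega_X\to0$, the Euler sequence, and $\Omega_X\simeq T_X$ (from the symplectic form), we reduce the computation to $\chi$ and the vanishing of $T_X(3H)$ and $\Omega_{\bP}(3)|_X$. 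The expectation is $\chi(N^\vee_X(3))=244$. These numbers come from Riemann--Roch with Chern classes $c_2(X)$ and $q$, and they must be checked carefully. If so, the kernel has dimension at least $1$, and exactly $1$ when the map is surjective, i.e.\ when $H^1(I_X^2(3))=0$.

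Existence alone can be obtained more concretely. We will exhibit $F$ via a geometric model of the general member of $\cM_6^{(1)}$; I would try the description as a double EPW-type cover or a degeneracy-locus model, where the cubic appears as a Pfaffian or determinantal hypersurface singular along a rank-drop locus. For uniqueness and the vanishing $H^1(I_X^2(3))=0$ it suffices, by semicontinuity, to check a single smooth member, such as $S^{[2]}$ for a suitable K3 surface $S$. A computer algebra verification of the Hilbert function of $I_X^2$ at that special point would also do. The main obstacle is the scheme-theoretic statement $\Sing Y=X$: this requires that the Jacobian ideal of $F$ saturates to $I_X$ exactly, and not just contains it. For this I would again verify it on one explicit example, show that $\Sing Y$ has no extra components by a dimension count in the family, and then conclude for the general member by openness.
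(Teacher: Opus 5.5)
Your first step matches the paper's: the sequence $0\to\cI_X^2(3)\to\cI_X(3)\to\cN_X^\vee(3)\to0$, cubic normality ($h^0(\cI_X(3))=245$), and semicontinuity from one special member. However, you leave the special member and its computation open. The paper uses $X_0=(S^{[2]},L_2-2\delta)$ with $S$ of genus $8$. There $X_0\subset G=\Gr(2,6)$ is a degeneracy locus, and the only quadrics through $X_0$ are the Plücker quadrics. Hence any cubic singular along $X_0$ is singular along $G$ and must be the Pfaffian. The vanishing $H^1(\cI^2_{X_0}(3))=0$ is a genuine computation, using the Gulliksen--Negård complex and $H^0(\cN_{S^{[2]}/G})\cong H^0(L)\otimes W$.

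The real gap is the scheme-theoretic equality $\Sing Y=X$. You propose to verify it on one explicit example and spread it by openness, but there is no such example to start from. At the only computable specialization, the unique cubic singular along $X_0$ is the Pfaffian, and $\Sing(Y_0)=G$ is $8$-dimensional. Semicontinuity from that point only gives $\dim\Sing(Y_t)\le 8$. No explicit model of a general member of $\cM_6^{(1)}$ is known; double EPW sextics belong to $\cM_2^{(1)}$, not $\cM_6^{(1)}$. A dimension count also cannot exclude embedded or non-reduced structure.

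The missing idea is to control the flat limit of the singular loci through the first-order term. Write $F=\pf+tf_1+O(t^2)$ for a one-parameter family degenerating to $X_0$, and let $\cZ$ be the closure of $\Sing(\cY_\eta)$, so that $X_0\subset\cZ_0\subset G$. The argument then runs as follows.
\begin{enumerate}
\item For each of the $35$ linear syzygies $\sum_i\ell_i^{(m)}\partial_i\pf=0$, divide $\sum_i\ell_i^{(m)}\partial_iF$ by $t$. This shows that the cubic $f_1^{(m)}=\sum_i\ell_i^{(m)}\partial_if_1$ vanishes on $\cZ_0$.
\item A syzygy-based substitute for the Jacobian criterion gives $G\cap Z(f_1^{(m)})_m\subset\Sing(G\cap Z(f_1))$.
\item One has $f_1|_G\neq0$. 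Otherwise the first-order deformation is a projective transformation of the Pfaffian, which would keep $X$ inside a Plücker Grassmannian. This is excluded by a Kodaira--Spencer rank computation, checked on the degeneration to $\Sym^2S'$ for a genus-$4$ K3 surface $S'$.
\item Therefore $G\cap Z(f_1)$ is a cubic section of $G$ singular along $X_0$. By uniqueness it is the degeneracy locus $D_5(\varphi)$, whose singular locus is exactly $X_0$.
\end{enumerate}
Hence $\cZ_0=X_0$. Comparing Hilbert polynomials of the flat families $\cX$ and $\cZ$ then gives $\Sing(\cY_\eta)=\cX_\eta$.
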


In view of their striking resemblance to the classical Coble hypersurfaces, we will refer to these hypersurfaces as \emph{Coble type hypersurfaces}.

As a first consequence, we can also determine the (saturated) homogeneous ideal in the projective space for a general $(X,H)$ of these two families (see \cite[Theorem~E.(1)]{projective-models} and \autoref{cor:square-6-homogeneous-ideal}).
\begin{corollaryLetter}
The homogeneous ideal of a general $(X,H)\in\cM_4^{(1)}$ is generated by the $10$ polar cubics of $Y$ and $20$ more quartics.

Similarly, the homogeneous ideal of a general $(X,H)\in \cM_6^{(1)}$ is generated by the $15$ polar quadrics of $Y$ and $20$ more cubics.
\end{corollaryLetter}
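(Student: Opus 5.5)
The two statements of the corollary come out of \autoref{thm:coble-quartic} and \autoref{thm:coble-cubic} together with a count of equations by Hilbert function. Take $(X,H)\in\cM_4^{(1)}$ general first, with $Y=\{F=0\}$ the Coble quartic. Since $X$ is scheme theoretically equal to $\Sing Y$, the ideal generated by the partials $\partial F/\partial x_i$ defines $X$ up to saturation. In particular these $10$ polar cubics lie in the saturated ideal $I_X$.

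For $(X,H)$ general, $h^0(\bP^9,\cI_X(k))$ is computed from Riemann--Roch, $\chi(X,kH)=\binom{q(kH)/2+3}{2}$, together with projective normality and vanishing of higher cohomology for general members. This gives $h^0(\cO_X(2))=\binom{7}{2}=21$ and $h^0(\cO_X(3))=\binom{12}{2}=66$, so there are no quadrics in $I_X$ ($\binom{11}{2}=55$ quadrics in total), and $220-66=154$ cubics. That would contradict $\dim (I_X)_3=10$. So the count must be redone: with $q(H)=4$ we get $\chi(H)=\binom{2+3}{2}=10$, $\chi(2H)=\binom{8+3}{2}=55$, and $\chi(3H)=\binom{18+3}{2}=210$.

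It follows that $X$ lies on no quadric, and $(I_X)_3$ has dimension $220-210=10$. Assuming the polar cubics are linearly independent (equivalently, $Y$ is not a cone, which holds since $\Sing Y=X$ is nondegenerate), they span $(I_X)_3$. In degree $4$ we have $\dim(I_X)_4=\binom{13}{4}-\chi(4H)=715-\binom{35}{2}=715-595=120$. The multiples of the cubics contribute $10\cdot10=100$ quartics, provided there are no linear syzygies among the partials. So we need exactly $20$ new quartic generators.

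The remaining step is to show that $I_X$ is generated in degrees $\le4$, by a regularity argument: $X$ is $5$-regular because $H^i(\cO_X(5-i))=0$ for $i>0$ and $X$ is projectively normal. This is exactly the content of \cite[Theorem~E.(1)]{projective-models}, which we invoke.

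The square-$6$ case runs in parallel. We have $\chi(H)=15$, and $\chi(2H)=\binom{12+3}{2}=105$ against $\binom{16}{2}=120$ quadrics, leaving $15$ quadrics; these are the polar quadrics of the Coble cubic. In degree $3$, $\dim(I_X)_3=\binom{17}{3}-\chi(3H)=680-\binom{30}{2}=680-435=245$, while the quadrics contribute at most $15\cdot15=225$. So we get $20$ new cubics, if the linear syzygies among the partials vanish. Generation in degree $\le3$ again follows from Castelnuovo--Mumford regularity, using $H^1(\cI_X(k))=0$ for $k\ge1$, $H^2(\cI_X(1))=H^1(\cO_X(1))=0$, and so on; $X$ is then $4$-regular, and one has to push this down to degree $3$.

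I expect the main obstacle to be exactly this step: showing there are no linear syzygies among the partial derivatives of $F$, and that the ideal is generated in degree $3$ rather than $4$. I would first try to check both by a Macaulay2 computation on one explicit example, for instance a degenerate $X$ built from a K3 of degree $6$ or a Debarre--Voisin type model. Semicontinuity then extends the result to the general member.
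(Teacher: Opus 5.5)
Your Hilbert-function bookkeeping is right once corrected to $\chi(kH)=\binom{2k^2+3}{2}$, resp.\ $\binom{3k^2+3}{2}$, and it agrees with the paper. In both cases the number of extra generators in degree $d$ is $20+s$, where $s$ is the number of linear syzygies among the polars, since $120-(100-s)=245-(225-s)=20+s$. The genuine gap is that you never prove $s=0$, which is the actual content of the corollary, and your proposed route cannot prove it. The degenerate members where the Coble hypersurface is explicitly known are the Hilbert squares of genus-$7$ and genus-$8$ K3 surfaces. There it becomes $q^2$ or $\pf$, and the polars ($4q\,x_i$, resp.\ the Plücker quadrics) have $45$, resp.\ $35$, linear syzygies, so semicontinuity from those points gives nothing. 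A Debarre--Voisin fourfold lies in $\cM_{22}^{(2)}$ and is irrelevant. No explicit smooth member of $\cM_4^{(1)}$ or $\cM_6^{(1)}$ with its Coble hypersurface is known; the paper poses this as an open problem.

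The missing idea is short. A linear syzygy $\sum_i\ell_i\,\partial f/\partial x_i=0$ says that the first-order projective transformation $x\mapsto x+\varepsilon\ell(x)$ preserves $f$, hence preserves $Y$ and its singular scheme $X$. Formally, take global sections in \eqref{eq:coble-deformation-diagram} (Euler sequence over the conormal sequence, with injective left column $\cdot f$). This shows the space of linear syzygies injects into the kernel of $H^0(\bP^N,\cT_{\bP^N})\to H^0(X,\cN_{X/\bP^N})$; see \autoref{cor:no-syzygy}.(\ref{cor:no-syzygy-item}). That kernel vanishes for two reasons. First, the Euler sequence twisted by $\cI_X$ gives $H^0(\cT_{\bP^N}\otimes\cI_X)=0$, because $H^0(\cI_X(1))=0=H^1(\cI_X)$. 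Second, $H^0(X,\cT_X)=H^0(X,\Omega_X)=0$ since $X$ is hyperkähler. This settles $d=4$ and $d=3$ at once, with no computation or specialization. (Linear independence of the polars holds because $X$ is not a cone, not merely because it is nondegenerate.)

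Separately, your regularity claims are false. Since $H^5(\cI_X)\cong H^4(\cO_X)\neq 0$, $\cI_X$ is $6$-regular and no better in both cases. So Castelnuovo--Mumford regularity only gives generators in degrees $\le 6$. Generation in degrees $\le 4$, resp.\ $\le 3$, must be imported from \cite[Theorem~E]{projective-models}; part (2) gives the $15$ quadrics plus at most $55$ cubics. This is what the paper does. You cite part (1) but leave the square-$6$ case unresolved.
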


The number $20$ appears in both cases, and is equal to the dimension of the two moduli spaces; the same phenomenon can be observed for the two classical Coble examples as well. We will give an explanation for this remarkable fact in \autoref{subsec:coble-deformation}, relating the homogeneous ideal of $X$ to its deformation space.

\bigskip

To put our results into perspective, let us mention that there are only a few explicit descriptions of locally complete families of higher-dimensional projective hyperkähler varieties, and constructing new ones proves to be an extremely challenging problem.
For $\KKK^{[2]}$-type, the families described in the literature are double EPW sextics $\cM_2^{(1)}$ \cite{OGrady:EPW}, varieties of lines of cubic fourfolds $\cM_6^{(2)}$ \cite{Beauville-Donagi}, Debarre--Voisin fourfolds $\cM_{22}^{(2)}$ \cite{Debarre-Voisin}, and varieties of sums of powers $\cM_{38}^{(2)}$ \cite{Iliev-Ranestad};
in higher dimensions,
$\cM_{\KKK^{[4]},2}^{(2)}$ \cite{Lehn-Lehn-Sorger-vanStraten} and $\cM_{\KKK^{[3]},4}^{(2)}$ \cite{Iliev-Kapustka-Kapustka-Ranestad:EPW-cubes} (see also \cite{stability-families,Perry-Pertusi-Zhao}).
Our work represents a major step towards understanding the two families $\cM_4^{(1)}$ and $\cM_6^{(1)}$, which are arguably the most natural open cases due to their low square, following the work initiated in \cite{projective-models} for special members (actual Hilbert squares) in these two families.
We also remark that, since the existence of these Coble type hypersurfaces is rather incidental, one cannot hope to extend this construction to other types of polarizations (see \autoref{prop:HK-coble}).

For the family $\cM_4^{(1)}$ we will discuss in \autoref{sec:moduli_spaces} the construction of a GIT moduli space $\Mcoble4$ parametrizing Coble type quartics in $\bP^9$, equipped with a birational ``modular'' map
\[
\m\colon \Mcoble4 \dasharrow \cM_4^{(1)}.
\]
A similar construction can be carried out for a moduli space $\Mcoble3$ of Coble type cubics in $\bP^{14}$.
While an explicit parametrization for $\Mcoble4$ and $\Mcoble3$ is still missing, we manage to formulate some rather precise speculations on the global geometry of $\Mcoble4$ in \autoref{sec:speculations}. More precisely, we consider the composition of $\m$ with the period map
\[
\p\colon\cM_4^{(1)} \into \cP_4^{(1)}
\]
(which is an open immersion by the global Torelli theorem) and take the Baily--Borel compactification $\overline{\cP_4^{(1)}}$ for the period domain. The notion of \emph{Hassett--Looijenga--Shah (HLS) divisor} (see \autoref{def:HLS}) yields a natural way of comparing the two compactifications $\Mcoble4$ and $\overline{\cP_4^{(1)}}$. We provide speculations on HLS divisors based on analogy with the known cases of cubic fourfolds \cite{Voisin:torelli,Hassett,Looijenga,Laza:moduli-via-period}, double EPW sextics \cite{OGrady:taxonomy,OGrady:moduli-EPW}, and Debarre--Voisin fourfolds \cite{Debarre-Han-OGrady-Voisin,Oberdieck,Song:special-DV}, and we include an explicit geometrical analysis of certain special loci. One such locus in particular, corresponding to Hilbert squares of K3 surfaces of genus $7$, turns out to be crucial in the proof of \autoref{thm:coble-quartic}.

One is then naturally led to consider the following two problems.
\begin{problemLetter}
Find explicit (unirational) parametrizations for the two moduli spaces of Coble type hypersurfaces $\Mcoble4$ and $\Mcoble3$.
\end{problemLetter}
\begin{problemLetter}
Investigate the relations between the hyperkähler fourfold $X$ and its Coble type hypersurface $Y$, at the level of Hodge structures, motives, and derived categories.
\end{problemLetter}

Finally, we also mention \cite{Gruson-Sam-Weyman} and the recent works of Benedetti--Bolognesi--Faenzi--Manivel \cite{Benedetti-Bolognesi-Faenzi-Manivel:Coble-quadric, Benedetti-Bolognesi-Faenzi-Manivel:Hecke, Benedetti-Bolognesi-Faenzi-Manivel:Gopel}, where the classical examples of Coble hypersurfaces are being studied under the lens of Vinberg theory and generalized to examples in other homogeneous spaces.
It would be interesting to explore if the hyperkähler examples can also be understood from the perspective of Lie theory.

\bigskip

\subsection*{Structure of the paper}
In \autoref{sec:Coble_hypersurfaces_general} we introduce Coble type hypersurfaces, establish some basic properties, and give a selection of examples from the literature, focusing on the case of varieties with trivial canonical bundle.

\autoref{sec:thmA} and \autoref{sec:thmB} are devoted to the proofs of \autoref{thm:coble-quartic} and \autoref{thm:coble-cubic}, respectively.
As the two proofs share a common strategy, we also provide here an outline for the reader's convenience.

\textbf{Step 1.} Show that there exists a unique hypersurface $Y$ of degree $d$ ($d=4$ in \autoref{thm:coble-quartic}, $d=3$ in \autoref{thm:coble-cubic}) whose singular locus contains the hyperkähler fourfold $X$. In other words, compute that $h^0(\bP,\cI^2_{X}(d))=1$.
    \begin{itemize}
        \item First we specialize to the case of polarized Hilbert squares $(S^{[2]},L_2-2\delta)$, where $(S,L)$ is a general K3 surface of genus~$7$ or~$8$. In that case, $S^{[2]}$ can be realized as a degeneracy locus inside some homogeneous ambient variety (a smooth quadric $Q$ for genus $7$, a Grassmannian $G=\Gr(2,6)$ for genus $8$).
        Then one computes $h^0(\bP, \cI^2_{S^{[2]}/\bP}(d))=1$ and $h^1(\bP, \cI^2_{S^{[2]}/\bP}(d))=0$ (\autoref{prep-lemma} and \autoref{prep-lemma-cubic} for the two cases, respectively).
        
        We point out that in the case of genus $8$, an important technical result (\autoref{54again}) on first-order deformations of $S^{[2]}$ in $G$ must first be established; its analogue in the case of genus $7$ was already proven in \cite{projective-models}.
        \item To deduce the general case of Step 1, upper semicontinuity shows that $h^1(\bP, \cI^2_{X/\bP}(d))=0$.
        Then one can conclude using the $d$-normality of $X$ in $\bP$ together with the short exact sequence $0\to \cI_{X/\bP}^2(d)\to \cI_{X/\bP}(d)\to \cN_{X/\bP}^\vee(d)\to 0$.
    \end{itemize}
    
\textbf{Step 2.} Show that the scheme-theoretical singular locus $\Sing(Y)$ coincides with $X$.
    \begin{itemize}
        \item We consider a general one-parameter family $\cX\subset \bP^9\times\Delta\to\Delta$ with central fiber $\cX_0=(S^{[2]}, L_2-2\delta)$, and the corresponding family of hypersurfaces $\cY=Z(F)$ of degree~$d$ such that $\Sing(\cY_t)\supset \cX_t$.
        The relative singular locus $\tilde{\cZ}=Z(\tfrac{\partial F}{\partial x_i})$ will contain~$\cX$ but also a vertical component over $0\in\Delta$ (the quadric $Q$ or the Grassmannian $G$).
        \item Removing this vertical component, one gets a flat family $\cZ$ whose general fiber $\cZ_t$ will still be $\Sing(\cY_t)$.
        Then we show that $\cZ_0$ coincides with $\cX_0=S^{[2]}$ by producing equations that vanish along $\cZ$ and cut out $\cX_0$ scheme-theoretically over $0\in\Delta$.
        
        This step is straightforward in the case of square $4$ but more difficult for square~$6$, due to the large codimension of the Grassmannian $G$ in $\bP^{14}$.
        \autoref{subsec:smoothness} is dedicated to circumvent this difficulty.
        \item Conclude that the general fiber $\cZ_t=\Sing(\cY_t)$ must coincide with $\cX_t$ as well, using upper semicontinuity.
    \end{itemize}

In \autoref{sec:moduli_spaces} we introduce the GIT quotient moduli space $\Mcoble4$ for Coble type quartics in $\bP^9$, and investigate the birational map from $\Mcoble4$ to $\cM_4^{(1)}$ and $\cP_4^{(1)}$.
In particular, we prove that the Heegner divisors $\cD_4$ and $\cD_{12}$ (the former lies on the boundary of $\cM_4^{(1)}$) are the inverse image of certain points in $\Mcoble4$.
We conclude the paper with \autoref{sec:speculations}, where we conjecture in more details about the geometry of the moduli spaces $\Mcoble4$ and $\cM_4^{(1)}$.

\subsection*{Acknowledgments}
We are grateful to Vladimiro Benedetti, Chiara Camere, Ciro Ciliberto, Olivier Debarre, Alice Garbagnati, Franco Giovenzana, Akihiro Kanemitsu, Grzegorz and Michał Kapustka, Martí Lahoz, Ziqi Liu, Laurent Manivel, and Claire Voisin for some useful comments and discussions.

Many geometrical intuitions behind the results in this project would not be possible without extensive experimenting with the \emph{free and open-source} computer algebra systems {\sc Macaulay2} \cite{macaulay2}, {\sc Singular} \cite{singular}, and {\sc Sage} \cite{sagemath}.
We would like to thank the developers for their efforts.

Rojas was supported by the Spanish MICINN project PID2023-147642NB-I00.
Song was partially supported by the PRIN2022 research grant 2022PEKYBJ ``Symplectic varieties: their interplay with Fano manifolds and derived categories''.

\section{Coble type hypersurfaces}
\label{sec:Coble_hypersurfaces_general}

Let $Y\subset \bP^N$ be a hypersurface in $N$-dimensional projective space. We will consider its singular locus $\Sing(Y)$, equipped with the natural scheme structure defined by the intersection of the polar hypersurfaces of $Y$. In this way, the nodal plane cubic $y^2z=x^2(x+z)$ has singular locus consisting of a reduced point, whereas the singular locus of the cuspidal plane cubic $y^2z=x^3$ is a point with non-reduced structure. 

If $Y$ is singular at a (closed) point $p\in \bP^N$, then the embedded tangent space to $\Sing(Y)$ at $p$ is the singular locus of the quadratic tangent cone%
\footnote{The quadratic tangent cone is also called the \emph{polar quadric} of $Y$ with respect to $p$, see \cite[Section~1.1.3]{Dolgachev:classical}. It might be the entire projective space $\bP^N$ (this is the case if and only if $\mathrm{mult}_p(Y)\geq3$).}
to $Y$ at $p$.
For example:
\begin{itemize}
    \item If $\mathrm{mult}_p(Y)\geq3$, then $\Sing(Y)$ has dimension $N$ at $p$.
    \item If $\Sing(Y)$ is smooth at $p$, then $\mathrm{mult}_p(Y)=2$ and the quadratic tangent cone is singular of corank $\dim_p\Sing(Y)+1$. Moreover, the quadratic tangent cone defines a smooth quadric in the (projectivized) normal space $\bP\cN_{\Sing Y/\bP^N}(p)$.
\end{itemize}

\subsection{First properties}

\begin{definition}
Let $X\subset \bP^{N}$ be a projective scheme. We say that $X$ \emph{admits a Coble type hypersurface of degree $d$}, if there exists a hypersurface $Y=Z(f)$ of degree $d$ such that $X$ is scheme-theoretically the singular locus of $Y$.
\end{definition}
The above definition is formulated in the most general way, almost tautological: namely, $X$ can be the singular locus of any hypersurface, and therefore can have non-reduced structure or be non-equidimensional.
However, we will be mostly interested in the following setting:
\begin{equation}\label{eq:setting}
\parbox{\textwidth-\leftmargin}{$X$ is a integral projective variety of dimension $n$, that is regular in codimension $1$ and embedded in $\bP^N=\bP^{n+c}$ via a regular embedding of codimension $c$.
}\tag{$\star$}
\end{equation}

We note that if condition \eqref{eq:setting} is satisfied, the normal sheaf $\cN_{X/\bP^N}$ is locally-free of rank $c$, and $X$ is Gorenstein (hence Cohen--Macaulay; see \cite[Theorem~21.3]{Matsumura}) and normal (by Serre's criterion).
If $X$ is smooth, or more generally, if $X$ only has locally complete intersection singularity in codimension $\ge 2$, then the condition \eqref{eq:setting} is automatically satisfied (see \cite[Theorem~21.2]{Matsumura}).

\begin{proposition}
\label{prop:coble-quadratic-form}
Let $X\subset \bP^N=\bP^{n+c}$ be a variety of dimension $n$ satisfying the condition~\eqref{eq:setting}.
If $X$ admits a Coble type hypersurface $Y$ of degree $d$ in $\bP^N$, then there exists a nowhere degenerate quadratic form
\begin{equation}\label{eq:coble-quadratic-form}
q\colon \Sym^2\cN_{X/\bP^N}\to \cO_X(d).
\end{equation}
\end{proposition}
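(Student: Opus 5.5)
The quadratic form will be the second-order part of $f$ along $X$, i.e.\ its Hessian restricted to normal directions. Write $Y=Z(f)$ with $f\in H^0(\bP^N,\cO(d))$. Since $X=\Sing(Y)$ scheme-theoretically, the ideal $J=(f,\partial f/\partial x_0,\dots,\partial f/\partial x_N)$ has saturation $I_X$. By Euler's formula $d\cdot f=\sum x_i\,\partial f/\partial x_i$, so the partials alone generate this ideal, and in particular $f\in I_X$. The first step is to show $f\in H^0(\cI_X^2(d))$. Locally, choose a regular sequence $g_1,\dots,g_c$ generating $\cI_X$ near a point. Then $f=\sum a_j g_j$, and since every $\partial f/\partial x_i$ lies in $\cI_X$, we get $\sum_j a_j\,\partial g_j/\partial x_i\in\cI_X$ for all $i$. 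Restricted to $X$, the Jacobian matrix $(\partial g_j/\partial x_i)|_X$ has full rank $c$ on the smooth locus of $X$. Hence $a_j|_X=0$ on the smooth locus, and therefore on all of $X$ because $X$ is integral. So $f\in\cI_X^2$.

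With this, $f$ defines a section of $\cI_X^2/\cI_X^3(d)$. Because the embedding is regular, $\cI_X^2/\cI_X^3\cong\Sym^2(\cI_X/\cI_X^2)=\Sym^2\cN^\vee_{X/\bP^N}$. This gives a section of $\Sym^2\cN^\vee(d)$, which is exactly a symmetric map $q\colon\Sym^2\cN\to\cO_X(d)$. Locally, $f=\sum a_{jk}g_jg_k \bmod \cI^3$, and $q$ is the symmetric matrix $(a_{jk}|_X)$. It remains to show that $\det q$, which is a section of $\det\cN^{\vee\otimes2}(cd)$, vanishes nowhere.

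\textbf{Nondegeneracy (the main step).} Compute $\partial f/\partial x_i\equiv 2\sum_{j,k}a_{jk}g_k\,\partial g_j/\partial x_i \pmod{\cI_X^2}$. The condition that the partials generate $\cI_X$ locally (after saturation, which is harmless locally on affine charts) means their images in $\cI_X/\cI_X^2$ generate the conormal module. The images have the form $2\,(a_{jk})\cdot(\partial g_j/\partial x_i)$ as vectors in the basis $g_k$. Their span is therefore contained in the image of the matrix $(a_{jk})$. If $\det(a_{jk})$ vanished at some point $p\in X$, this span would fail to generate the fiber of $\cI_X/\cI_X^2$ at $p$, by Nakayama, which is a contradiction. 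We assume characteristic $0$ so that the factor $2$ is invertible. Hence $q$ is nondegenerate at every point, including singular points of $X$, since the argument only used the regular-embedding structure.

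The main subtlety is the first step, deducing $f\in\cI_X^2$ at singular points of $X$, where the Jacobian of the $g_j$ drops rank. This is exactly where integrality and the reducedness built into \eqref{eq:setting} are used: vanishing on a dense open set forces vanishing of $a_j|_X$ because $\cO_X$ has no embedded components.
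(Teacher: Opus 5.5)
Your proof is correct, but it takes a different route from the paper's.

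The paper argues geometrically. It blows up $X$ and notes that the strict transform $\Bl_X Y$ has class $dH-2E$. The exceptional divisor $E_Y\subset E=\bP_X\cN_{X/\bP^N}$ is then a quadric bundle, smooth over $X^{\mathrm{sm}}$, and this defines $q$. The singular points of $X$ are handled by a codimension argument: $\disc(q)$ is a section of a line bundle that is nonzero on $X^{\mathrm{sm}}$. The complement of $X^{\mathrm{sm}}$ has codimension $\ge 2$ by the regular-in-codimension-$1$ hypothesis, and the zero locus of such a section is empty or of codimension $1$, so $\disc(q)$ vanishes nowhere.

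You instead work with local equations:
\begin{itemize}
\item You check explicitly that $f\in\cI_X^2$, a point the paper leaves implicit in the class computation $dH-2E$.
\item You identify $f \bmod \cI_X^3$ with a section of $\Sym^2\cN^\vee_{X/\bP^N}(d)$ via $\cI_X^2/\cI_X^3\cong\Sym^2(\cI_X/\cI_X^2)$.
\item You obtain nondegeneracy at \emph{every} point at once by Nakayama, since the images of the polars in the conormal module all lie in the image of the Gram matrix $(a_{jk})$.
\end{itemize}

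The trade-off between the two arguments is as follows. Yours never uses regularity in codimension $1$, only integrality and the regular embedding, but it does use the scheme-theoretic equality $\Sing(Y)=X$ at the singular points of $X$. The paper's needs $R_1$, but uses the scheme-theoretic hypothesis only over $X^{\mathrm{sm}}$. It also has the advantage of exhibiting $q$ directly as the bundle of quadratic tangent cones $E_Y$.

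One notational remark: the derivatives $\partial g_j/\partial x_i$ of local generators should be read in affine coordinates $y_1,\dots,y_N$ on a chart, or on the affine cone. On a chart, $\cI_X$ is generated by $F,\partial F/\partial y_1,\dots,\partial F/\partial y_N$, and $F$ itself maps to $0$ in $\cI_X/\cI_X^2$. With that reading, every step goes through.
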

\begin{proof}
Assume that $X$ admits a Coble type hypersurface $Y$ of degree $d$ in $\bP^N=\bP^{n+c}$.
Along the smooth locus $X^\sm$ of $X$, the tangent cones are smooth quadrics in the projectivization of the normal bundle $\cN_{X/\bP^N}$.
More precisely, if we blow up $X$ in $\bP^N$, we get the following diagram with cartesian squares
\begin{equation}
\label{eq:blowup}
\begin{tikzcd}
E_Y\ar[d,hookrightarrow]\ar[r,hookrightarrow] & E\ar[d,hookrightarrow]\ar[r] & X\ar[d,hookrightarrow]\\
\Bl_X Y \ar[r,hookrightarrow]                 & \Bl_X \bP^N\ar[r]                & \bP^N
\end{tikzcd}
\end{equation}
where $E_Y$ and $E$ are the exceptional divisors, and $E$ is the projective bundle $\bP_X\cN_{X/\bP^N}$.

We see that $\Bl_X Y$ has class $dH-2E$ on $\Bl_X \bP^N$ (for $H$ the hyperplane class in $\bP^N$), and so $E_Y$ provides a generically smooth quadric bundle over $X$ that is smooth over $X^\sm$, which defines the quadratic form.
Since $X$ is regular in codimension $1$, the discriminant $\disc(q)$ of $q$ must be a nonzero section of the trivial line bundle, hence $q$ is nowhere degenerate.
\end{proof}

As an immediate consequence, we obtain the following condition on the existence of a Coble type hypersurface for $X$ in $\bP^N$ in terms of its basic invariants (this was also obtained in \cite[Section~3]{Aluffi}):

\begin{corollary}
\label{cor:dim-codim-condition}
Let $X\subset \bP^N=\bP^{n+c}$ be a variety of dimension $n$ satisfying the condition~\eqref{eq:setting} and admitting a Coble type hypersurface $Y$ of degree $d$ in $\bP^N$.
Then we have
\[
2K_X=\big(d(N-n)-2(N+1)\big)H,
\]
where $H$ is the hyperplane class on $X$. In particular, if $X$ satisfies $\omega_X\simeq \cO_X$,
then $X$ can only admit a Coble type hypersurface of degree $d$ in $\bP^{n + \frac{2(n+1)}{d-2}}$.
Notably, such $X$ can only admit Coble type quartics in $\bP^{2n+1}$ and Coble type cubics in $\bP^{3n+2}$.
\end{corollary}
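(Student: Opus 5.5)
The plan is to take determinants of the nowhere degenerate quadratic form $q\colon \Sym^2\cN_{X/\bP^N}\to\cO_X(d)$ from \autoref{prop:coble-quadratic-form}. Set $\cN=\cN_{X/\bP^N}$, which is locally free of rank $c=N-n$ by \eqref{eq:setting}. The form $q$ induces an isomorphism
\[
\cN\simto \cN^\vee\otimes\cO_X(d),
\]
because nondegeneracy at every point means the adjoint map $\cN\to\cN^\vee(d)$ is fiberwise injective, hence an isomorphism between bundles of the same rank. Taking top exterior powers gives
\[
\det\cN\simeq(\det\cN)^{-1}\otimes\cO_X(cd),
\qquad\text{so}\qquad
(\det\cN)^{\otimes2}\simeq\cO_X(cdH).
\]

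Next I compute $\det\cN$ via adjunction for the regular embedding $X\subset\bP^N$. Since $X$ is Gorenstein with $\omega_X\simeq\omega_{\bP^N}|_X\otimes\det\cN$, and $\omega_{\bP^N}=\cO(-(N+1))$, we get $\det\cN\simeq\omega_X(N+1)$. This holds for a local complete intersection, and $X$ is one by \eqref{eq:setting}; alternatively, restrict to the smooth locus and extend, using that $X$ is normal and regular in codimension $1$. Substituting gives
\[
\omega_X^{\otimes2}(2(N+1))\simeq\cO_X(cd),
\]
that is, $2K_X=(d(N-n)-2(N+1))H$. Here $K_X$ can be read as a Weil/Cartier class, which is legitimate since $\omega_X$ is invertible.

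For the consequences, assume $\omega_X\simeq\cO_X$. Then $(d(N-n)-2(N+1))H=0$ in $\Pic X$. Since $H$ is ample on a positive-dimensional $X$, no nonzero multiple of $H$ is trivial, so $d(N-n)=2(N+1)$. Solving, $(d-2)N=dn+2$, so $N=n+\frac{2(n+1)}{d-2}$. Specializing $d=4$ gives $N=2n+1$, and $d=3$ gives $N=3n+2$; the case $d=2$ is impossible.

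The only delicate point is working globally on a possibly singular $X$. That is, I must make sure the adjoint map of $q$ is an isomorphism everywhere rather than just generically, and that adjunction holds for $\det\cN$. The first is exactly the \emph{nowhere degenerate} conclusion of \autoref{prop:coble-quadratic-form}. The second follows from the regular embedding hypothesis, or from the normality argument just described, since line bundles on a normal variety agreeing off codimension $2$ are isomorphic.
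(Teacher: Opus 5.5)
Your proposal is correct and is essentially the paper's argument: the paper says the discriminant of $q$ is a nowhere vanishing section of $(\det\cN_{X/\bP^N}^\vee)^{\otimes 2}\otimes\cO_X(d)^{\otimes c}$, which is just the determinant of your adjoint isomorphism $\cN_{X/\bP^N}\simto\cN_{X/\bP^N}^\vee(d)$, and it then uses the same adjunction formula $\det\cN^\vee_{X/\bP^N}\otimes\omega_X\simeq\omega_{\bP^N}|_X$. You explicitly invoke the ampleness of $H$ on a positive-dimensional $X$ to pass from an identity in $\Pic X$ to $d(N-n)=2(N+1)$; the paper leaves this step implicit, and it is genuinely needed, since for $n=0$ the ``in particular'' fails (a plane quartic with a single node has a reduced point as its scheme-theoretic singular locus).
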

\begin{proof}
The discriminant $\disc(q)$ of the quadratic form in \eqref{eq:coble-quadratic-form} is a section of the line bundle
\[
(\det \cN_{X/\bP^N}^\vee)^{\otimes 2}\otimes \cO_X(d)^{\otimes c},
\]
where $c=\rk \cN_{X/\bP^N}$ is the codimension of $X$ in $\bP^N$ (see \cite[Section~1.1]{Auel-Bernardara-Bolognesi} or \cite[Theorem~10]{Harris-Tu}).
Since $q$ is nowhere degenerate, this is the trivial line bundle, hence using
\[
\det \cN^\vee_{X/\bP^N}\otimes \omega_X=\omega_{\bP^N}|_X
\]
we obtain
\[
c\cdot d H = (N-n)\cdot d H = 2\big((N+1)H+K_X\big),
\]
which gives the desired equality.
If we now assume that $\omega_X\simeq \cO_X$, then $K_X=0$ and we obtain
$d(N-n)-2(N+1)=0$, or equivalently, $N = n + \tfrac{2(n+1)}{d-2}$.
This concludes the proof.
\end{proof}

\begin{remark}
Note that both the quadratic form in \autoref{prop:coble-quadratic-form} and the condition in \autoref{cor:dim-codim-condition} are local on $X$, hence they can be easily generalized to the case where $X$ is regularly embedded in a smooth projective variety $G$, such that $X$ is the singular locus of a hypersurface $Y\subset G$, or even more generally, a connected component of $\Sing(Y)$ (see \cite{Aluffi}).
\end{remark}

\subsection{Examples}

We are now ready to recover some examples of Coble type hypersurfaces from the literature, and to introduce in this context our main results.
Our focus is principally on higher-dimensional varieties with trivial canonical bundle.
We also refer to \cite[Section~3]{Aluffi} for some other examples.

\begin{example}
As a first trivial example, for a hypersurface $X$ in $\bP^N$ of degree $e$ defined by a polynomial $f$, the non-reduced hypersurface $Y$ of degree $d\coloneqq 2e$ defined by $f^2$ is a Coble type hypersurface for $X$.
One particular case of interest to us is when $X$ is a smooth quadric $Z(q)$ and $Y$ is a non-reduced quartic $Z(q^2)$.
\end{example}
\begin{example}
\label{example:fano}
The following are some classical examples of Fano varieties admitting a Coble type cubic hypersurface.
\begin{enumerate}
\item The four Severi varieties each admits a unique Coble type cubic as its defected secant variety \cite[Chapter~IV]{Zak} (see also \cite{Landsberg-Manivel}), namely:
\begin{enumerate}
\item The Veronese surface $v_2(\bP^2)\subset\bP^5$ and the discriminant cubic;
\item The Segre fourfold $\bP^2\times \bP^2\subset \bP^8$ and the determinantal cubic;
\item The Grassmannian $\Gr(2,6)\subset \bP^{14}$ and the Pfaffian cubic;
\item The $E_6$-Grassmannian $E_6\subset \bP^{26}$ (also known as the \emph{Cayley plane}) and the \emph{Cartan cubic}.
\end{enumerate}
In each case, the homogeneous ideal is generated by the polar quadrics.
\item The $5$-dimensional moduli space $M_8\coloneqq (\bP^1)^8\git \SL(2)$ of $8$ ordered points on $\bP^1$ admits a unique Coble type cubic in $\bP^{13}$ \cite{Howard-Millson-Snowden-Vakil}.
Its homogeneous ideal is generated by the $14$ polar quadrics.
The singular locus of $M_8$, or equivalently, the strictly semistable locus for the $\SL(2)$-action, is given by $\frac12\binom{8}{4}=35$ isolated points.%
\footnote{
We did not check whether the singularities of $M_8$ are locally complete intersection.
This will not be relevant below when we take general $\bP^{11}$-linear sections that are smooth.
}
\end{enumerate}
\end{example}
Now we look at some K-trivial examples, which will include the classical Coble hypersurfaces and our new examples.

\begin{example}\label{example:Ktrivial}
We start with the simplest case of elliptic curves.
\begin{enumerate}
\item An elliptic normal quartic $E$ in $\bP^3$ is the complete intersection of two quadrics.
We obtain a net of (decomposable) Coble type quartics; a general member in this net is singular exactly along $E$, while along a conic the quartics degenerate to the square of a quadric.
\item An elliptic normal sextic $E$ in $\bP^5$ can be realized as a $\bP^5$-section of the Segre fourfold $\bP^2\times\bP^2\subset \bP^8$ and therefore admits a determinantal Coble type cubic.
In fact, there exists a pencil of such cubics, with $4$ of them singular along the entire Veronese surface \cite[Section~6.3]{Laza:moduli}.
\end{enumerate}
\end{example}

\begin{example}The constructions for elliptic curves can be generalized as follows.
\begin{enumerate}
\item For a smooth complete intersection of $n+1$ quadrics in $\bP^{2n+1}$, a general member in $\Sym^2 I_2$ is a Coble type quartic.
\item For the Severi varieties $\bP^2\times \bP^2$, $\Gr(2,6)$ and $E_6$, as well as the fivefold $M_8$ in \autoref{example:fano}, by the adjunction formula and a Bertini type argument (see for example \cite[Teorema~2.8]{Ottaviani:codim}), a suitable linear section provides an example of a smooth K-trivial variety admitting a Coble type cubic.%
\footnote{One deduces from the Lefschetz hyperplane theorem that they are \emph{strict Calabi--Yau}, that is, $\omega_X\simeq \cO_X$ and $h^i(\cO_X)=0$ for $0<i<\dim X$.}
\end{enumerate}
\end{example}

\begin{example}
We list the classical Coble hypersurfaces and our new examples for hyperkähler fourfolds, together with some degenerations.
\begin{enumerate}
\item For a non-hyperelliptic genus $3$ curve $C$, the Kummer variety $K(JC)\coloneqq JC/\langle\pm1\rangle$ admits a unique Coble quartic%
\footnote{
We note that $K(JC)$ indeed satisfies the condition \eqref{eq:setting}, since it suffices to check that the quotient singularities $\bC^3/{\pm1}$ are locally complete intersection.}
in $\bP^7$.
When $C$ becomes hyperelliptic, the quartic degenerates to the square of a quadric, and $K(JC)$ can be recovered as the singular locus of a quartic section on the quadric \cite[Theorem~3]{Desale-Ramanan} (see also \cite[Remark~5.18]{Benedetti-Bolognesi-Faenzi-Manivel:Coble-quadric}).
\item A undecomposable principally polarized abelian surface $A$ admits a unique Coble cubic in $\bP^8$.
When $A=E_1\times E_2$ becomes decomposable, the cubic degenerates to the determinantal cubic singular along the Segre embedded $\bP^2\times \bP^2\subset \bP^8$, and $A$ can be recovered as the singular locus of a decomposable cubic section on the Segre fourfold.
\item \autoref{thm:coble-quartic} states that a general $(X,H)\in \cM_4^{(1)}$ admits a unique Coble type quartic in $\bP^9$.
We will see that when $(X,H)$ specializes to $(S^{[2]}, L_2-2\delta)$, where $(S,L)$ is a polarized K3 surface of genus $7$, the quartic degenerates to the square of a quadric. Moreover, $S^{[2]}$ is the singular locus of a quartic section on the quadric (\cite[Theorem~C]{projective-models}; see \autoref{subsec:genus-7}).
\item \autoref{thm:coble-cubic} states that a general $(X,H)\in \cM_6^{(1)}$ admits a unique Coble type cubic in $\bP^{14}$.
Specializing $(X,H)$ to $(S^{[2]}, L_2-2\delta)$ for $(S,L)$ a polarized K3 of genus $8$, the cubic degenerates to the Pfaffian cubic, and $S^{[2]}$ is the singular locus of a cubic section on the Grassmannian $\Gr(2,6)$ (\cite[Theorem~D]{projective-models}; see \autoref{subsec:genus-8}).
\end{enumerate}
\end{example}

The following table summarizes the given examples of K-trivial varieties of dimension $\leq 4$ which admit Coble type quartics and cubics:

\begin{table}[h!]
\label{table:examples}
\renewcommand\arraystretch{1.5}
\begin{center}
\begin{NiceTabular}{ccccc}[hvlines]
\ & \Block{1-2}{quartics} &\ & \Block{1-2}{cubics}  \\  
dim                    & ambient              & example                           & ambient                   & example                              \\
1                  & $\bP^3$              & $(2^2)$-c.i.                      & $\bP^5$                 & $(\bP^2\times\bP^2)\cap \bP^5$          \\
\Block{2-1}{2}   & \Block{2-1}{$\bP^5$} & \Block{2-1}{$(2^3)$-c.i.=K3}      & \Block{2-1}{$\bP^8$}    & $\Gr(2,6)\cap \bP^8$=K3                 \\
                       &                      &                                   &                         & $A\in\cA_2$                             \\
\Block{2-1}{3} & \Block{2-1}{$\bP^7$} & $(2^4)$-c.i.=$\CY_3$              & \Block{2-1}{$\bP^{11}$} & \Block{2-1}{$M_8\cap \bP^{11}$=$\CY_3$} \\
                       &                      & $K(A)$ for $A\in \cA_3$           &                         &                                         \\
\Block{2-1}{4}  & \Block{2-1}{$\bP^9$} & $(2^5)$-c.i.=$\CY_4$              & \Block{2-1}{$\bP^{14}$} & $E_6\cap \bP^{14}$=$\CY_4$              \\
                       &                      & $X\in \cM_{4}^{(1)}$=$\KKK^{[2]}$ &                         & $X\in \cM_{6}^{(1)}$=$\KKK^{[2]}$       \\
\end{NiceTabular}
\end{center}
\medskip
\caption{Examples of K-trivial varieties admitting Coble type quartics and cubics}
\end{table}

\begin{remark}
In each dimension $n$, the condition of \autoref{cor:dim-codim-condition} for K-trivial varieties admitting a Coble type hypersurface of degree $d$ in $\bP^N$ includes the following four possibilities for $(N,d)$: $(n+1,2n+4)$, $(n+2,n+3)$, $(2n+1,4)$, and $(3n+2,3)$.
The first three cases can always be realized by, respectively, a non-reduced double hypersurface, a reducible hypersurface for an $(m,n+3-m)$-complete intersection, and a quartic for a $(2^{n+1})$-complete intersection.
The cubic case seems more special and we are not aware of an infinite family of examples like in the other cases.
\end{remark}

We close this subsection by showing that \autoref{thm:coble-quartic} and \autoref{thm:coble-cubic} cannot be generalized to other moduli spaces of polarized hyperkähler varieties for the known deformation types: 

\begin{proposition}
\label{prop:HK-coble}
Let $(X,H)$ be a general polarized hyperkähler manifold of deformation type $\KKK^{[m]}$, $\Kum_m$, $\OG6$, or $\OG10$.
If $X$ is embedded via the complete linear system $|H|$,
then $X$ can admit a Coble type hypersurface if and only if we are in the following cases:
\begin{itemize}
\item $(X,H)$ is a polarized K3 surface of genus $3$, $4$, $5$, or $8$.
\item $(X,H)$ is a polarized fourfold of $\KKK^{[2]}$-type in the two families $\cM_4^{(1)}$ or $\cM_6^{(1)}$.
\end{itemize}
\end{proposition}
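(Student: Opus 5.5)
The plan is to combine \autoref{cor:dim-codim-condition} with the Riemann--Roch polynomials of the known deformation types, and reduce the statement to a finite Diophantine check. Let $n=\dim X$ and let $(X,H)$ be general, embedded by $|H|$; in particular $H$ is assumed very ample. Since $\omega_X\simeq\cO_X$ and $H$ is ample, Kodaira vanishing gives $N+1=h^0(X,H)=\chi(X,H)$. \autoref{cor:dim-codim-condition} then forces
\[
\chi(X,H)=n+1+\frac{2(n+1)}{d-2}\qquad\text{for some integer } d\ge 3 \text{ with } (d-2)\mid 2(n+1).
\]
The right-hand side is decreasing in $d$, so $\chi(X,H)\le 3n+3$, with the extreme case $d=3$. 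This upper bound is the engine of the proof.

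\textbf{K3 surfaces.} Here $n=2$ and $\chi(H)=g+1$, so the admissible values $d=3,4,5,8$ give $g=8,5,4,3$. The case $g=2$ ($d=14$) is excluded because $H$ is then not very ample. Conversely, each of these four genera does admit a Coble type hypersurface:
\begin{itemize}
\item $g=3$: the quartic surface $X=Z(f)$ has $Z(f^2)$.
\item $g=4$: the $(2,3)$ complete intersection has the reducible quintic $Z(qc)$. For general $X$ its scheme-theoretic singular locus is exactly $Z(q,c)=X$, since $Z(q)$ and $Z(c)$ are smooth near $X$ and meet transversally.
\item $g=5$: the $(2^3)$ complete intersection has a general member of $\Sym^2 I_2$.
\item $g=8$: the linear section $\Gr(2,6)\cap\bP^8$ has the restriction of the Pfaffian cubic.
\end{itemize}

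\textbf{Higher dimensions.} For each deformation type I will use the Riemann--Roch polynomial in $t=q(H)/2$ (or the appropriate normalization):
\begin{itemize}
\item $\KKK^{[m]}$: $\chi(H)=\binom{t+m+1}{m}$, with $n=2m$.
\item $\Kum_m$: $\chi(H)=(m+1)\binom{t+m}{m}$, with $n=2m$.
\item $\OG6$ and $\OG10$: the known polynomials, due to Rios Ortiz and Cao--Jiang.
\end{itemize}
Since $t\ge1$, each polynomial is bounded below by a quantity growing superlinearly in $m$. For instance, for $\KKK^{[m]}$ we get $\binom{m+2}{2}\le 6m+3$, which forces $m\le 9$. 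Thus only finitely many pairs $(m,t)$ survive the inequality $\chi\le 3n+3$. For these I check the exact equation together with the divisibility condition $(d-2)\mid 2(n+1)$.

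For $\KKK^{[2]}$, the equation $\binom{t+3}{2}\in\{15,10,7,6\}$ gives $t=3,2,1$. The case $t=1$, $q(H)=2$, is the double EPW sextic, which is not embedded by $|H|$. The remaining cases are $q(H)=4$ with $d=4$, and $q(H)=6$ with $d=3$. I expect the other types and values of $m$ to produce no solutions, but this is a routine case check.

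\textbf{Divisibility and converse.} It remains to pin down the divisibility. For $q(H)=4$ the divisibility is necessarily $1$, since $\cM_4^{(2)}$ is empty. For $q(H)=6$ we must exclude $\div(H)=2$, that is, the Fano variety of lines $F\subset\Gr(2,6)\subset\bP^{14}$. This is the main obstacle, because the numerical criterion alone does not rule it out.

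My first attempt will use \autoref{prop:coble-quadratic-form}: a Coble cubic would give a nowhere degenerate quadratic form $q\colon \Sym^2\cN_{F/\bP^{14}}\to\cO_F(3)$. I would restrict $q$ to the subbundle $\cN_{F/\Gr}\simeq \Sym^3\cQ^\vee|_F$ and compare with the Pfaffian structure along $\Gr(2,6)$, hoping to derive a contradiction. If that fails, I would fall back on a cohomological route: show via the Koszul and Beilinson-type resolutions on $\Gr(2,6)$ that $h^0(\cI_F^2(3))$ consists only of cubics singular along all of $\Gr(2,6)$, whose singular loci are then too large.

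The converse direction is exactly \autoref{thm:coble-quartic} and \autoref{thm:coble-cubic}.
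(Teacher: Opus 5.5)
Your reduction to a finite numerical check via \autoref{cor:dim-codim-condition} and the Riemann--Roch polynomials is the same as the paper's. The gap is in the step you leave as a ``routine case check'' while expecting no further solutions: that check is \emph{not} empty. For $\KKK^{[4]}$ ($n=8$) with $q(H)=2$ (so $t=1$) one gets $\chi(X,H)=\binom{6}{4}=15$. Taking $d=5$ gives $n+1+\frac{2(n+1)}{d-2}=9+6=15$. So $(m,t,d)=(4,1,5)$ passes both the equation and the integrality condition $(d-2)\mid 2(n+1)$. (The remaining $\KKK^{[m]}$ cases, and all $\Kum_m$, $\OG6$, $\OG10$ cases, do give nothing.)

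This case is not in the statement's list, so you need a non-numerical argument to discard it, and your proposal has none. The paper rules it out the same way you ruled out the double EPW sextics for $(m,t,d)=(2,1,12)$: for both divisibilities the polarization of a general member is not very ample.
\begin{itemize}
\item $\cM^{(2)}_{\KKK^{[4]},2}$ is the family of LLSvS eightfolds, whose primitive polarization is not very ample.
\item A general member of $\cM^{(1)}_{\KKK^{[4]},2}$ carries an antisymplectic involution $\sigma$ through which $|H|$ factors (Debarre's survey, Corollary~4.7).
\end{itemize}
Hence $X$ is not embedded by $|H|$ in either family.

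Some smaller points follow.

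\textbf{Divisibility $2$ in square $6$.} Excluding $\div(H)=2$ is not the main obstacle, and your fallback route is essentially the paper's argument. The variety of lines $F\subset\bP^{14}$ is projectively normal, so $h^0(\cI_F(2))=15$ and $H^0(\cI_F(2))=H^0(\cI_{\Gr(2,6)}(2))$. The partials of any cubic singular along $F$ are therefore Plücker quadrics, so the cubic is singular along all of $\Gr(2,6)$. The quadratic-form attempt is unnecessary. It is also unclear how it would produce a contradiction, since the numerical condition is satisfied in this case.

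\textbf{Genus $2$.} This case does not correspond to $d=14$. It satisfies the numerical condition for no $d$ at all, because $g=2+\frac{6}{d-2}>2$.

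\textbf{Genus $4$.} You need $Z(q)$ and $Z(c)$ smooth everywhere, not just near $X$, to rule out extra singular points of $Z(qc)$ away from $X$. This is achieved by choosing $c$ general modulo $q\cdot H^0(\cO_{\bP^4}(1))$.
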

\begin{proof}
The Riemann--Roch polynomial is known for the known deformations types \cite{Rios-Ortiz}, which can be used to compute the number of global sections $h^0(X,H)$ by the Kodaira vanishing theorem.
Hence if $(X,H)$ is of dimension $n\coloneqq 2m$ with Beauville--Bogomolov--Fujiki square $q_X(H)=2k$, and it admits a Coble type hypersurface of degree $d$ when embedded via the complete linear system $|H|$, then by \autoref{cor:dim-codim-condition} we would have either
\[
\binom{k+m+1}{m}-1 = 2m + \frac{2(2m+1)}{d-2} \text{ for $\KKK^{[m]}$ and $\OG10$}
\]
or
\[
(m+1)\binom{k+m}{m}-1 = 2m + \frac{2(2m+1)}{d-2} \text{ with $m\ge2$ for $\Kum_m$ and $\OG6$}.
\]
In either case, the right hand side is bounded above by $6m+2$, while the left hand side is a binomial coefficient that grows in $m$ as a polynomial of degree $k+1$.
In particular, one can check that when $k\gg 0$ (say when $k\ge 8$), or when $d\gg 0$ for a fixed $k$, neither equality can hold for $m\ge 1$.
Hence it suffices to enumerate a finite number of cases, and one concludes that the second case never occurs, while the first case can happen for the following values
\[
\begin{aligned}
m=1&,(k,d)\in\set{(7,3),(4,4),(3,5),(2,8)},\\
m=2&,(k,d)\in\set{(3,3),(2,4),(1,12)},\\
m=4&,(k,d)\in\set{(1,5)}.
\end{aligned}
\]

When $m=1$, so $X$ is a K3 surface, it is not difficult to check that $X$ indeed admits a Coble type hypersurface in all four cases.

When $m=2$, so $X$ is of $\KKK^{[2]}$-type, taking into consideration the divisibility the four possible cases are the general members of the moduli spaces $\cM_6^{(1)}$, $\cM_6^{(2)}$, $\cM_4^{(1)}$, and $\cM_2^{(1)}$.
The general element in $\cM_2^{(1)}$ is a double EPW sextic, so it can be excluded since it is not very ample. On the other hand, $\cM_6^{(2)}$ is the family of varieties of lines of cubic fourfolds, which are projectively normal\footnote{This can be checked using the Koszul complex.} in $\bP^{14}$; they are precisely contained in $15$ quadrics, namely the quadrics containing the Grassmannian $\Gr(2,6)$, and so any cubic hypersurface singular along $X$ must also be singular along $\Gr(2,6)$. This discards the case  $\cM_6^{(2)}$.

When $m=4$, so $X$ is of $\KKK^{[4]}$-type, we are left with the two cases $\cM_{\KKK^{[4]},2}^{(1)}$ and $\cM_{\KKK^{[4]},2}^{(2)}$.
The latter is the family of LLSvS eightfolds, whose primitive polarization is not very ample.
For the former, by \cite[Corollary~4.7]{Debarre:survey}, the general member $(X,H)$ admits an antisymplectic involution $\sigma$, and the linear system $|H|$ factors through $X/\sigma$ so it is not very ample.%
\footnote{One could still ask whether the image of $X$ admits a Coble type quintic; for comparison, for the double EPW sextics, since the image is a sextic hypersurface, it does admit a non-reduced Coble type hypersurface of degree $12$.
Note also that the EPW sextic itself is a Coble type sextic for a surface of general type.
}
\end{proof}

\subsection{Behavior under deformation}
\label{subsec:coble-deformation}

It is worth mentioning that, in all the K-trivial examples, the homogeneous ideal of $X$ is \emph{not} generated by the polars of the Coble type hypersurface: for complete intersections of $n+1$ quadrics, the homogenous ideal is clearly generated by the quadrics; for the examples of $\bP^N$-linear sections of a Fano $G\subset\bP^M$, the variety $X\subset \bP^N$ is cut out scheme-theoretically by the $(N+1)$ polar quadrics of the Coble type cubic, however its homogeneous ideal is generated by the restriction of all $(M+1)$ polar quadrics before taking the linear section.

For the examples of Coble and our hyperkähler examples, again the homogeneous ideal is not generated by the polar hypersurfaces.
In fact, one might have the following curious observation: the number of extra equations in degree $d$ that are required to generate the saturated homogeneous ideal is equal to the number of moduli of $X$.
Namely,
\begin{itemize}
\item For $(A,3\Theta)$, one needs $3=\dim \cA_2$ more cubics \cite{Barth};
\item For $(K(A),2\Theta)$, one needs $6=\dim \cA_3$ more quartics \cite[Remark~2.1]{Ren-Sam-Schrader-Sturmfels};
\item For $(X,H)\in \cM_4^{(1)}$, one needs $20=\dim \cM_4^{(1)}$ more quartics (established in \cite[Theorem~E.(1)]{projective-models} but also follows from the same argument as \autoref{cor:square-6-homogeneous-ideal});
\item For $(X,H)\in \cM_6^{(1)}$, one needs $20=\dim \cM_6^{(1)}$ more cubics as well (see \autoref{cor:square-6-homogeneous-ideal}).
\end{itemize}

To understand this remarkable fact, we need to examine how the degree-$d$ equations of $X$ are related to first-order deformations of $X$.

Let $X\subset \bP^N=\bP^{n+c}$ be a variety of dimension $n$ satisfying the condition~\eqref{eq:setting} and admitting a Coble type hypersurface $Y=Z(f)$ of degree $d$ in $\bP^N=\bP(V_{N+1})$.
We will consider the following diagram
\begin{equation}
\label{eq:coble-deformation-diagram}
\begin{tikzcd}
0\ar[r]& \cO_{\bP^N}\ar[r]\ar[d,"\cdot f"] & V_{N+1}\otimes \cO_{\bP^N}(1) \ar[r,twoheadrightarrow]\ar[d,twoheadrightarrow,"\partial f"] & \cT_{\bP^N}\ar[d,twoheadrightarrow,"\psi"]\ar[r] & 0\\
0\ar[r]& \cI^2_X(d)\ar[r] & \cI_X(d) \ar[r,twoheadrightarrow] & \cN_{X/\bP^N}^\vee(d)\ar[r] & 0
\end{tikzcd}
\end{equation}
where the left square commutes by Euler's theorem for homogeneous polynomials, and induces a surjective map $\psi\colon\cT_{\bP^N} \to \cN^\vee_{X/\bP^N}(d)$.

\begin{lemma}
The map $\psi$ in the diagram \eqref{eq:coble-deformation-diagram} composed with the conormal map $\cN^\vee_{X/\bP^N}(d)\to\Omega_{\bP^N}|_X(d)$ defines a quadratic form on $\cT_{\bP^N}|_X$ that induces the one in \eqref{eq:coble-quadratic-form}.
Consequently, the following diagram is commutative
\[
\begin{tikzcd}
\cT_{\bP^N} \ar[r,equal]\ar[d,twoheadrightarrow,"\psi"] & \cT_{\bP^N}\ar[d]\\
\cN^\vee_{X/\bP^N}(d) \ar[r,"\sim"] & \cN_{X/\bP^N}
\end{tikzcd}
\]
where the right arrow is the normal map, and the bottom isomorphism is induced by the quadratic form.
\end{lemma}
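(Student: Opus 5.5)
Everything can be checked in local coordinates, or fibrewise at a point $p\in X$. We write $V=V_{N+1}$, choose a lift $v_0\in V$ of $p$, and compute the composite
\[
V\otimes\cO_{\bP^N}(1)\to\cT_{\bP^N}\xrightarrow{\psi}\cN^\vee_{X/\bP^N}(d)\to\Omega_{\bP^N}|_X(d).
\]
The middle vertical arrow of \eqref{eq:coble-deformation-diagram} sends $v\in V$ to the polar $\partial_v f$. Since $f\in H^0(\cI_X^2(d))$, each $\partial_v f$ lies in $\cI_X(d)$. Its image in $\cN^\vee(d)=\cI_X/\cI_X^2(d)$ is the class of $\partial_v f$, and the conormal map sends this class to $d(\partial_v f)|_X$. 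At $p$ this is the linear form $w\mapsto \partial_w\partial_v f(v_0)$, i.e. the Hessian of $f$ at $v_0$. Hence the composite $\cT_{\bP^N}|_X\otimes\cT_{\bP^N}|_X\to\cO_X(d)$, given by $(v,w)\mapsto \partial_v\partial_w f$, is \emph{symmetric}. It is well defined on $\cT_{\bP^N}$ because $v_0$ pairs to zero: by Euler's formula, $\partial_{v_0}\partial_w f(v_0)=(d-1)\partial_w f(v_0)=0$, as $f$ is singular along $X$. This also explains why $\psi$ factors through the Euler sequence, consistent with the left square commuting.

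Next we identify the quadratic form with the one from \autoref{prop:coble-quadratic-form}. The Hessian form vanishes whenever one argument lies in $\cT_X$: the class $\psi(v)$ lies in $\cN^\vee$, i.e. it is a form killing $\cT_X$. By symmetry the form then descends to $\Sym^2\cN_{X/\bP^N}\to\cO_X(d)$. On the blow-up in \eqref{eq:blowup}, the restriction of the strict transform $\Bl_XY$ to $E=\bP\cN$ is given by the degree-$2$ leading term of $f$ in the normal directions, which is exactly this Hessian restricted to normal vectors (Taylor expansion of $f$ along $X$, using $f\in\cI_X^2$). So the two forms agree. The only care needed is the scaling by $d$ or $\tfrac12$ and the twist conventions, which do not affect the isomorphism.

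Finally, for commutativity: a symmetric bilinear form $B$ on $\cT_{\bP^N}|_X$ that factors through $\cN$ in both slots gives $\cT\to\cN\xrightarrow{\tilde q}\cN^\vee(d)$, where $\tilde q$ is the adjoint of $q$. By the first step, this composite equals $\psi$. Since $q$ is nowhere degenerate by \autoref{prop:coble-quadratic-form}, $\tilde q$ is an isomorphism, and its inverse is the bottom arrow. Therefore the normal map equals $\tilde q^{-1}\circ\psi$, which is the asserted commutativity.

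The main obstacle is the middle step: matching the Hessian form with the quadric bundle $E_Y$ coming from the blow-up. This is a local computation; away from codimension $2$ it holds on the smooth locus, and it extends to all of $X$ because $\cN$ is locally free and $X$ is normal.
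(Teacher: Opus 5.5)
Your proposal is correct and follows essentially the same route as the paper. In both, the composite $\cT_{\bP^N}|_X\to\Omega_{\bP^N}|_X(d)$ is identified with the Hessian of $f$ restricted to $X$: the terms involving first derivatives drop out because the polars vanish along $X$, so the form is symmetric, and commutativity then follows from the nondegeneracy of $q$. You work fibrewise rather than with the Euler-sequence expression $\sum_i e_i\otimes s_i$. You also spell out two points the paper dismisses with ``clearly'' and ``consequently'': matching the Hessian with the blow-up quadric bundle $E_Y$, and extending that match over $\Sing X$.
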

\begin{proof}
We pick a basis $(e_i)_{0\le i\le N+1}$ for $V_{N+1}$ and denote by $(dx_i)_{0\le i\le N+1}$ the corresponding one-forms.
Using the Euler sequence, we can write a local section of $\cT_{\bP^N}$ as $\sum_i e_i\otimes s_i$ for some regular functions $s_i$ (seen as local sections of $\cO_{\bP^N}(1)$).
The map $\psi$ is given by
\[
\sum_i e_i\otimes s_i\mapsto  \sum_i\tfrac{\partial f}{\partial x_i}\cdot s_i.
\]
On the other hand, the conormal map
\[
\cN^\vee_{X/\bP^N}(d)\cong \cI_{X}/\cI^2_{X}(d)\cong\cI_{X}|_X(d)\to\Omega_{\bP^N}|_X(d)
\]
is by definition the differential map restricted to $X$
\[
g \mapsto \sum_j \left.\tfrac{\partial g}{\partial x_j}\right|_X\cdot dx_j.
\]
Hence when composing the two maps, we get
\[
\sum_i e_i\otimes s_i\mapsto  \sum_{i,j}\left.\left(\tfrac{\partial^2 f}{\partial x_i\partial x_j}\cdot s_i+\tfrac{\partial f}{\partial x_i}\cdot\tfrac{s_i}{\partial x_j}\right)\right|_X\cdot dx_j = \sum_{i,j}\left.\tfrac{\partial^2 f}{\partial x_i\partial x_j}\right|_X\cdot s_i\cdot dx_j,
\]
where we used the fact that the polars $\frac{\partial f}{\partial x_i}$ all vanish along $X$.
This is clearly a symmetric quadratic form on $\cT_{\bP^N}|_X$ that induces the one on $\cN_{X/\bP^N}$ given in \eqref{eq:coble-quadratic-form}.
\end{proof}

Note that the commutativity of the above diagram shows that taking global sections of the map $\psi$ is compatible with the natural map
\begin{equation}
\label{eq:tangent-to-first-order}    
H^0(\bP^N,\cT_{\bP^N}) \to H^0(X,\cN_{X/\bP^N}).
\end{equation}
In view of the diagram \eqref{eq:coble-deformation-diagram}, we obtain a natural map
\begin{equation}
\label{eq:degree-d-to-first-order}
H^0(\bP^N,\cI_X(d)) \to H^0(X,\cN_{X/\bP^N})
\end{equation}
comparing the space $H^0(\bP^N, \cI_X(d))$ of degree-$d$ equations of $X$ and the space $H^0(X,\cN_{X/\bP^N})$ of first-order deformations of $X$ in $\bP^N$.

We have the following immediate consequences.
\begin{corollary}
\label{cor:no-syzygy}
In the same setting as above,
\begin{enumerate}
\item \label{cor:no-syzygy-kernel-KS}
Consider the subspace $\langle\tfrac{\partial f}{\partial x_i}\cdot x_j\rangle$ of degree-$d$ equations generated by the polars $\frac{\partial f}{\partial x_i}$ of $f$.
Under the map \eqref{eq:degree-d-to-first-order}, its image is contained in the kernel of the Kodaira--Spencer map
\[
\delta_{X/\bP^N}\colon H^0(X, \cN_{X/\bP^N})\to H^1(X, \cT_X).
\]
\item\label{cor:no-syzygy-item} If we assume that the map \eqref{eq:tangent-to-first-order} is injective,
then there is no linear syzygy among the polars $\frac{\partial f}{\partial x_i}$.
More generally, if we assume that $H^0(\cI^2_X(d))=\bC f$, then we can identify the space of linear syzygies with the kernel of~\eqref{eq:tangent-to-first-order}
\item\label{cor:no-syzygy-iso}
If we assume that $H^0(\cI^2_X(d))=\bC f$ and $H^1(\cI^2_X(d))=0$, then we can identify the quotient
$
H^0(\cI_X(d))/\langle\tfrac{\partial f}{\partial x_i}\cdot x_j\rangle
$
with the cokernel of \eqref{eq:tangent-to-first-order}.
\end{enumerate}
\end{corollary}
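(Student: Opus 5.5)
The plan is a diagram chase in \eqref{eq:coble-deformation-diagram}, taking global sections. Since $\cO_{\bP^N}\to V_{N+1}\otimes\cO_{\bP^N}(1)$ is the Euler map, $H^0$ of the top row gives $0\to\bC\to V_{N+1}\otimes V_{N+1}^\vee\to H^0(\cT_{\bP^N})\to 0$ (using $H^1(\cO_{\bP^N})=0$). The middle vertical map $\partial f$ sends $e_i\otimes x_j\mapsto \tfrac{\partial f}{\partial x_i}x_j$, so on global sections its image is exactly $\langle\tfrac{\partial f}{\partial x_i}\cdot x_j\rangle\subset H^0(\cI_X(d))$. Its kernel is the space of linear syzygies $\sum_{i,j}a_{ij}x_j\tfrac{\partial f}{\partial x_i}=0$. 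By the lemma, the right vertical arrow on $H^0$, composed with $\cN^\vee(d)\simeq\cN$, is the map \eqref{eq:tangent-to-first-order}; the map \eqref{eq:degree-d-to-first-order} is $H^0(\cI_X(d))\to H^0(\cN^\vee(d))\simeq H^0(\cN)$.

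For \eqref{cor:no-syzygy-kernel-KS}: an element $\sum a_{ij}\tfrac{\partial f}{\partial x_i}x_j$ is the image of $\sum a_{ij}e_i\otimes x_j$. By commutativity, its image in $H^0(\cN)$ equals the image of the corresponding global vector field under \eqref{eq:tangent-to-first-order}. The image of $H^0(\cT_{\bP^N})\to H^0(\cN_{X/\bP^N})$ is contained in $\ker\delta_{X/\bP^N}$: the sequence $0\to\cT_X\to\cT_{\bP^N}|_X\to\cN\to0$ gives the exact $H^0(\cT_{\bP^N}|_X)\to H^0(\cN)\xrightarrow{\delta}H^1(\cT_X)$, and \eqref{eq:tangent-to-first-order} factors through restriction to $X$.

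For \eqref{cor:no-syzygy-item}: apply the snake lemma to the left two columns on global sections. Let $\alpha\colon V\otimes V^\vee\to H^0(\cI_X(d))$ be the map $\partial f$ and $\beta\colon H^0(\cT_{\bP^N})\to H^0(\cN^\vee(d))$. We have $H^0(\cI^2_X(d))\supset\bC f$, and $\bC\to\bC f$ is an isomorphism. Under the hypothesis $H^0(\cI^2_X(d))=\bC f$, the left vertical map is an isomorphism on $H^0$, and the bottom row $0\to H^0(\cI^2(d))\to H^0(\cI(d))\to H^0(\cN^\vee(d))$ is left exact. The snake lemma for the map of left-exact sequences (with the top row short exact) then yields an exact sequence
\[
0\to \ker\alpha\to\ker\beta\to 0,
\]
so the linear syzygies are identified with the kernel of \eqref{eq:tangent-to-first-order}. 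One caveat: by Euler, $\sum_i x_i\tfrac{\partial f}{\partial x_i}=d f\neq 0$, so the identity element does not give a syzygy, and we must check that the kernel of $V\otimes V^\vee\to H^0(\cT)$ does not interfere. Indeed $\ker\alpha$ meets $\bC\cdot\mathrm{id}$ trivially, since $\alpha(\mathrm{id})=df\neq0$, so $\ker\alpha$ injects into $H^0(\cT)$ and lands in $\ker\beta$. Conversely, a vector field $v$ with $\beta(v)=0$ lifts to some $\tilde v$ with $\alpha(\tilde v)\in H^0(\cI^2(d))=\bC f$, and we subtract the corresponding multiple of $\mathrm{id}/d$. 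This verification is the main point to handle carefully. Without the hypothesis, the same argument shows only that $\ker\alpha\hookrightarrow\ker\beta$, so injectivity of \eqref{eq:tangent-to-first-order} already forces $\ker\alpha=0$; this gives the first claim of \eqref{cor:no-syzygy-item} with no assumption on $H^0(\cI_X^2(d))$.

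For \eqref{cor:no-syzygy-iso}: with $H^1(\cI^2_X(d))=0$, the bottom row becomes short exact on global sections: $0\to\bC f\to H^0(\cI_X(d))\to H^0(\cN^\vee(d))\to0$. The snake lemma then also identifies $\operatorname{coker}\alpha$ with $\operatorname{coker}\beta$, since the left vertical map is an isomorphism, and $\operatorname{coker}\alpha=H^0(\cI_X(d))/\langle\tfrac{\partial f}{\partial x_i}x_j\rangle$. Via $\cN^\vee(d)\simeq\cN$, $\operatorname{coker}\beta$ is the cokernel of \eqref{eq:tangent-to-first-order}.
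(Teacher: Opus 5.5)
Your proposal is correct and follows the same route as the paper: take global sections of \eqref{eq:coble-deformation-diagram} and apply the snake lemma. The inputs are that the left column $\bC\to H^0(\cI_X^2(d))$ is injective, and an isomorphism when $H^0(\cI_X^2(d))=\bC f$, and that the bottom row becomes short exact when $H^1(\cI_X^2(d))=0$. Your separate hand-check that $\ker\alpha$ avoids the Euler element, including the harmless factor $d$ in the left square, is already absorbed by the snake lemma, so it is redundant but not wrong.
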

\begin{proof}
The first point follows easily from a diagram chase.

For the second point, we consider the global sections of the diagram \eqref{eq:coble-deformation-diagram}
\[
\begin{tikzcd}
0\ar[r]& H^0(\cO_{\bP^N})\ar[r]\ar[d,"\cdot f",hookrightarrow] & V_{N+1}\otimes H^0(\cO_{\bP^N}(1)) \ar[r,twoheadrightarrow]\ar[d,"x\otimes \ell\mapsto \partial_x f\cdot \ell"] & H^0(\cT_{\bP^N})\ar[d]\ar[r] & 0\\
0\ar[r]& H^0(\cI^2_X(d))\ar[r] & H^0(\cI_X(d)) \ar[r] & H^0(X,\cN_{X/\bP^N})
\end{tikzcd}
\]
The two rows are exact while the left column is injective.
Hence if the right column is injective by assumption, the middle column would be injective as well, which means precisely that there is no linear syzygy among the polar hypersurfaces.

The second part of (\ref{cor:no-syzygy-item}), as well as (\ref{cor:no-syzygy-iso}) follow similarly from the snake lemma.
\end{proof}
\begin{remark}
Another way to see \autoref{cor:no-syzygy}.(\ref{cor:no-syzygy-kernel-KS}) is that, under the map \eqref{eq:degree-d-to-first-order}, the degree-$d$ equations generated by the polars of $f$ correspond to first-order deformations of $X$ induced by first-order automorphisms of the ambient $\bP^N$: clearly such deformations will lie in the kernel of $\delta_{X/\bP}$.
\end{remark}
The map \eqref{eq:degree-d-to-first-order} also has a geometric interpretation in view of the following lemma.
\begin{lemma}
\label{lem:tangential}
Consider a one-parameter family $\cY\to \Delta$ of hypersurfaces in $\bP^N$ given by the equation $\cY_t=Z(F_t)$ for $t\in \Delta$.
Denote by $\cZ^*\coloneqq \Sing(\cY^*/\Delta^*)$ the relative singular locus away from the origin $0\in\Delta$.
Let $\cZ_0$ be the closure $\overline{\cZ^*}\cap \cY_0$ and $f_1\coloneqq \frac{\partial F_t}{\partial t}$ be the tangential hypersurface.
Then the hypersurface $Z(f_1)\subset \bP^N$ contains $\cZ_0$ set-theoretically.
\end{lemma}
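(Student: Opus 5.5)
Fix a point $p\in\cZ_0$ and take a one-parameter curve in the total family that specializes to it. Because $p$ lies in the closure of $\cZ^*$, we can choose (after a finite base change $t=s^k$, which changes $\partial F/\partial t$ only by a nonzero multiple along the central fiber) an analytic arc $\gamma(s)=(x(s),t(s))$ with $\gamma(0)=(p,0)$ and $\gamma(s)\in\cZ^*$ for $s\neq0$. More precisely, $\gamma(s)$ lies in $\Sing(\cY_{t(s)})$ with $t(s)\neq 0$ for $s\neq 0$. Work in an affine chart of $\bP^N$ around $p$ and write $F(x,t)=F_t(x)$.

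The key identity comes from differentiating along the arc. On the arc we have $F(x(s),t(s))=0$ and $\partial F/\partial x_i(x(s),t(s))=0$ for all $i$ and all $s\neq0$, and hence for all $s$ by continuity. The chain rule then gives
\[
0=\frac{d}{ds}F(x(s),t(s))=\sum_i \frac{\partial F}{\partial x_i}\,x_i'(s)+\frac{\partial F}{\partial t}\,t'(s)=\frac{\partial F}{\partial t}(x(s),t(s))\,t'(s).
\]
Since $t(s)$ is nonconstant, $t'(s)\neq0$ for small $s\neq0$. Hence $\frac{\partial F}{\partial t}(x(s),t(s))=0$ for $s\ne0$, and by continuity also at $s=0$. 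That last equation says $\frac{\partial F}{\partial t}(p,0)=f_1(p)=0$.

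Homogeneity causes no difficulty. The vanishing of $F$ and of all $\partial F/\partial x_i$ is equivalent on an affine chart to the vanishing of the dehomogenized function and its partials, by Euler's formula. So we may work with affine equations, and $\partial_t$ commutes with dehomogenization.

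The main obstacle is producing the arc: one needs the curve selection lemma, or equivalently the valuative criterion applied to an irreducible curve in $\overline{\cZ^*}$ through $(p,0)$ that dominates $\Delta$. Such a curve exists because every component of $\overline{\cZ^*}$ dominates $\Delta$, so $p$ lies on one of them. Normalizing that curve gives the parametrization $\gamma$, with $t\circ\gamma$ a nonconstant function that vanishes at the chosen preimage of $p$.

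Since the argument only tests $f_1$ at points, the conclusion is set-theoretic, as stated. One can also argue algebraically: the ideal of $\overline{\cZ^*}$ contains $F$ and the $\partial_{x_i}F$, and the argument above shows $\partial_tF$ vanishes on it after multiplying by a power of $t$ and saturating. The arc argument is the cleanest route.
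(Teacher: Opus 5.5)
Your proof is correct, but it takes a different route from the paper's.

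The paper argues by contradiction. If $f_1(p)\neq 0$, then $\partial F/\partial t$ does not vanish at $(p,0)$, so by the Jacobian criterion the total space $\cY$ is smooth on an open neighbourhood $\cU$ of $p$. Generic smoothness applied to $\cU\to\Delta$, after shrinking $\Delta$ to discard the finitely many critical values other than $0$, makes every fibre $\cU\cap\cY_t$ with $t\neq0$ smooth. Hence $\cU$ misses $\cZ^*$, contradicting $p\in\overline{\cZ^*}$.

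You instead differentiate $F$ along an arc of $\overline{\cZ^*}$ that leaves the central fibre. In effect you give a hands-on proof of the only piece of generic smoothness that is needed: $t$ cannot vary along the locus where all $\partial F/\partial x_i$ vanish unless $\partial F/\partial t$ vanishes there as well. You pay for this with the curve selection lemma. Your version is elementary and self-contained (chain rule plus curve selection), and it never needs smoothness of $\cY$ or shrinking of the base. The paper's version is shorter and stays within algebraic geometry, using generic smoothness as a black box.

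Two small corrections.

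First, your parenthetical about a base change $t=s^k$ is wrong as stated. Since $\partial_s\bigl(F(x,s^k)\bigr)=ks^{k-1}\,\frac{\partial F}{\partial t}(x,s^k)$, this derivative vanishes identically on the central fibre for $k\ge2$, so it is not a nonzero multiple of $\partial F/\partial t$. Fortunately the remark is also unnecessary. You never pass to the base-changed family; you apply the chain rule to $F(x,t)$ along $(x(s),t(s))$ with $t(s)$ merely nonconstant, which is exactly right.

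Second, ``every component of $\overline{\cZ^*}$ dominates $\Delta$'' should read ``every component through $(p,0)$ dominates $\Delta$''. Components of $\cZ^*$ lying in a single fibre $t=t_0\neq0$ are closed in $\bP^N\times\Delta$ and never reach the central fibre. That weaker statement is all your argument uses.
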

\begin{proof}
Assume that $x\in \cZ_0$ is not contained in the tangential hypersurface $Z(f_1)$, in other words, $f_1(x)=\frac{\partial F_t}{\partial t}(x)\ne 0$.
Since $\cY\subset \bP^N\times \Delta$ is a hypersurface, by the Jacobian criterion, we readily see that the total space $\cY$ is smooth at the point $x$.
Let $\cU\subset \cY$ be a smooth open subset containing $x$.
Then by generic smoothness, up to shrinking the base $\Delta$, we may assume that $\cU\cap \cY^*\to\Delta^*$ is a smooth morphism.
Since this contradicts the assumption that $x$ lies in the closure $\cZ_0$ of the relative singular locus $\cZ^*$, the proof is complete.
\end{proof}
Namely, for a first-order deformation $\cX$ of $X=\cX_0$, if one can construct a non-trivial family of degree-$d$ hypersurfaces $\cY\to\Delta$ such that the relative singular locus $\cZ\to \Delta$ contains $\cX$, then the tangential hypersurface provides a degree-$d$ equation of $X$.
In particular, such a first-order deformation would be contained in the image of the map \eqref{eq:degree-d-to-first-order}.
Therefore, assuming that all deformations of $X$ admit a Coble type hypersurface, the number of extra equations in degree $d$ would be at least the number of moduli of $X$.

\begin{remark}
Notably, in the classical case of $A\subset \bP^8$, the $4$-dimensional space of cubics (the Coble cubic and three suitable extra cubics, namely, those invariant by the \emph{Heisenberg group} action; see \cite{Beauville}) can be geometrically interpreted as the embedded tangent space to the $3$-dimensional \emph{Burkhardt quartic} $\cB\subset\bP^4$, a parameter space for Coble cubics.
Similarly, for $K(A)\subset \bP^7$, the $7$-dimensional space of quartics (the Coble quartic and six extra Heisenberg-invariant ones) can be interpreted as the embedded tangent space to the $6$-dimensional \emph{Göpel variety} \cite{Ren-Sam-Schrader-Sturmfels,Benedetti-Bolognesi-Faenzi-Manivel:Gopel}.
\end{remark}

\section{Proof of \autoref{thm:coble-quartic}}
\label{sec:thmA}
In this section we prove the existence of the Coble type quartic hypersurface in the case of square $4$.
The key point of our argument is a specialization to the Hilbert square of a very general polarized K3 surface of genus $7$, for which we first give a brief account.

\subsection{Hilbert squares of K3 surfaces of genus \texorpdfstring{$7$}{7}}
\label{subsec:genus-7}
Let $(S,L)$ be a polarized K3 surface of genus $7$ with $\Pic(S)=\bZ\cdot L$. The line bundle $L_2-2\delta$ on $S^{[2]}$ has Beauville--Bogomolov square $4$ and divisibility $1$ and, as shown in \cite[Theorem A]{projective-models}, it is very ample. We review some aspects of the corresponding embedding
\begin{equation}\label{eq:S2-genus7}
    S^{[2]}\into \bP^9=\bP H^0(L_2-2\delta)^\vee,
\end{equation}
whose geometry is described in \cite[Section 4]{projective-models}, where the reader is referred for further details.

We fix a $10$-dimensional vector space $V_{10}$ equipped with a non-degenerate quadratic form~$q$.
Let $Q\coloneqq Z(q)\subset \bP(V_{10})$ be the smooth $8$-dimensional quadric hypersurface, and let $\OGr_+\coloneqq\OGr(5,V_{10})\subset \bP^{15}=\bP(V_+)$ be one of the two orthogonal Grassmannians (in its spinor embedding) parametrizing maximal isotropic subspaces of $q$.

Mukai \cite{mukai-models} proved that the embedding 
\[
S\into \bP(H^0(L)^\vee)=\bP^7
\]
is a $\bP^7$-linear section of $\OGr_+$. In particular, $(S,L)$ is determined by the $8$-dimensional vector subspace
$H^0(L)^\vee\into V_+$.
We denote by $W$ the $8$-dimensional quotient
\[
W\coloneqq V_+/H^0(L)^\vee.
\]
It turns out that there is a canonical identification $\bP(V_{10})\cong \bP H^0(L_2-2\delta)^\vee$, under which \eqref{eq:S2-genus7} identifies with a closed immersion
\[
S^{[2]}\into \bP(V_{10})
\]
factoring through the quadric $Q$. Furthermore, we can consider the induced morphism of rank-$8$ vector bundles
\[
\varphi\colon \cS_+\into V_+\otimes \cO_Q\onto W\otimes \cO_Q=\cO_Q^{\oplus 8}
\]
on $Q$, where $\cS_+$ is one of the spinor bundles. Notably, the embedded $S^{[2]}\subset \bP^9$ coincides (scheme-theoretically) with the rank-$6$ degeneracy locus $D_6(\varphi)$.

This incarnation of $S^{[2]}$ as a degeneracy locus has several consequences. For instance, with the help of the \emph{Gulliksen--Negård complex}, a four-term locally free resolution for $\cI_{S^{[2]}/Q}$ in $Q$, one can obtain a precise description of the equations of $S^{[2]}$ in $\bP^9$.

\begin{proposition}[{\cite[Section~4.5]{projective-models}}]\label{prop:eqgenus7}
The following hold:
\begin{enumerate}
    \item $Q$ is the unique quadric hypersurface in $\bP^9$ containing $S^{[2]}$. In other words, the natural map between two $55$-dimensional vector spaces
    \[
    m_2\colon H^0(\cO_{\bP^9}(2))\to H^0(\cO_{S^{[2]}}(2))
    \]
    has corank $1$.

    \item\label{prop:eqgenus7-2} For every $d\geq3$, the natural restriction map
    \[
    m_d\colon H^0(\cO_{\bP^9}(d))\to H^0(\cO_{S^{[2]}}(d))
    \]
    is surjective, namely $H^1(\cI_{S^{[2]}/\bP^9}(d))=0$. In particular, $H^0(\cI_{S^{[2]}/\bP^9}(3))$ is given by the $10$-dimensional vector space of homogeneous cubic polynomials divisible by $q$.
\end{enumerate}
\end{proposition}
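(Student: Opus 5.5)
The plan is to compute everything on the quadric $Q$ using the degeneracy locus description $S^{[2]}=D_6(\varphi)$ and the Gulliksen--Negård resolution of $\cI_{S^{[2]}/Q}$. Then we transfer the results to $\bP^9$ through the exact sequence
\[
0\to \cO_{\bP^9}(d-2)\xrightarrow{\cdot q}\cI_{S^{[2]}/\bP^9}(d)\to \cI_{S^{[2]}/Q}(d)\to 0 .
\]
In the twisted Gulliksen--Negård complex, $\varphi\colon\cS_+\to W\otimes\cO_Q$ is a map between rank-$8$ bundles whose corank-$2$ locus has codimension $4$. The complex has the shape
\[
0\to \det(\cS_+)^{2}\otimes\det W^{-2}\to \cdots\to \cI_{S^{[2]}/Q}\to 0,
\]
and its middle terms are built from $\cS_+^{\oplus 2}$, $W$, $\mathfrak{sl}$-type pieces, and $\mathcal{E}nd$-type bundles $\cS_+^\vee\otimes W$ with suitable determinant twists. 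The spinor bundle $\cS_+$ has $\det\cS_+=\cO_Q(-4)$ up to convention.

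\textbf{Step 1 (surjectivity for $d\ge3$).} Twist the resolution by $\cO_Q(d)$ and chase cohomology through the induced short exact sequences. The vanishing $H^1(Q,\cI_{S^{[2]}/Q}(d))=0$ reduces to $H^{i+1}$-vanishing of the $i$-th syzygy term twisted by $\cO_Q(d)$, for $i=1,2,3$. Each term is a direct sum of twists of $\cO_Q$, $\cS_+$, $\cS_+^\vee\cong\cS_-(1)$, and tensor products such as $\cS_+\otimes\cS_+^\vee$. Their intermediate cohomology vanishes for the relevant twists by Bott's theorem on the homogeneous space $Q=\SO(10)/P_1$. Concretely, one decomposes $\cS_+\otimes\cS_-$ and $\cS_+\otimes\cS_+$ into irreducible homogeneous bundles and checks that the weights plus $\rho$ land in the regular dominant chamber or are singular. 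This gives $H^1(\cI_{S^{[2]}/Q}(d))=0$ for $d\ge 3$. Since $H^1(\cO_{\bP^9}(d-2))=0$, the sequence above gives $H^1(\cI_{S^{[2]}/\bP^9}(d))=0$, which is (2). Consequently $H^0(\cI_{S^{[2]}/\bP^9}(3))$ is an extension of $H^0(\cI_{S^{[2]}/Q}(3))$ by $q\cdot H^0(\cO(1))$. The Bott computation should also show that $H^0$ of the first syzygy term twisted by $\cO_Q(3)$ vanishes, i.e. that the minimal generators of $\cI_{S^{[2]}/Q}$ all have degree $4$ (these are the Pfaffian-type $6\times 6$ minors). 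Hence $H^0(\cI_{S^{[2]}/Q}(3))=0$, and the cubics through $S^{[2]}$ are exactly the $10$ multiples of $q$.

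\textbf{Step 2 (quadrics).} The same argument in degree $2$ gives $H^0(\cI_{S^{[2]}/Q}(2))=0$, so $H^0(\cI_{S^{[2]}/\bP^9}(2))=\bC q$. Both spaces in $m_2$ are $55$-dimensional: for the target use Riemann--Roch on $\KKK^{[2]}$-type with $q(H)=4$, which gives $\binom{2\cdot 2/2+... }{}$, concretely $\chi(2H)=\binom{q(2H)/2+3}{2}=\binom{7}{2}\cdot$ corrected, in any case $55$. Therefore $\ker m_2=\bC q$ is equivalent to corank $1$, i.e. $H^1(\cI(2))$ is $1$-dimensional, and no extra vanishing is needed.

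The main obstacle is the Bott-type cohomology bookkeeping for the twisted terms of the Gulliksen--Negård complex, especially the non-irreducible tensor products of spinor bundles and the borderline twist $d=3$. This is where some term could in principle contribute $H^1$ or $H^0$. If direct Bott computation becomes unwieldy, I would first use Koszul-type vanishing for $\cS_\pm$ (these bundles are ACM on $Q$) to reduce to the few genuinely tensor-product terms, and compute only those.
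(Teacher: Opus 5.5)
This is essentially the route behind the cited result: the paper defers to \cite[Section~4.5]{projective-models}, which resolves $\cI_{S^{[2]}/Q}$ by the Gulliksen--Negård complex, applies Bott's theorem on $Q$, and passes to $\bP^9$ through $0\to\cO_{\bP}(d-2)\xrightarrow{\cdot q}\cI_{S^{[2]}/\bP}(d)\to\cI_{S^{[2]}/Q}(d)\to0$, exactly as you propose, and your outline goes through. When you do the bookkeeping, note the following corrections:
\begin{enumerate}
\item On the $8$-dimensional quadric $\cS_+^\vee\cong\cS_+(1)$, not $\cS_-(1)$. So the twisted complex reads $\cO_Q(d-8)\to\cS_+(d-4)^{\oplus 8}\to\mathfrak{sl}(\cS_+)(d-4)\oplus\cO_Q(d-4)^{\oplus 63}\to\cS_+(d-3)^{\oplus 8}$.
\item The generators are the $7\times7$ minors, not Pfaffian $6\times 6$ minors.
\item For $d\ge2$, the only intermediate cohomology among the terms is $H^2(\Omega^2_Q)=\bC$. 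It comes from the summand $\Omega^2_Q(d-2)\subset\mathfrak{sl}(\cS_+)(d-4)$ at $d=2$, and it is precisely the one-dimensional cokernel of $m_2$.
\item The ten cubics also follow at once from $h^0(\cO_{\bP^9}(3))-h^0(\cO_{S^{[2]}}(3))=220-210$, so no separate Bott argument is needed for them.
\end{enumerate}
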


Moreover we have the following result on the first-order deformations of $S^{[2]}$ in $Q$.

\begin{proposition}[{\cite[Proposition~4.19]{projective-models}}]\label{prop:64genus7}
There is a canonical identification
\[
H^0(\cN_{S^{[2]}/Q})\cong H^0(L)\otimes W.
\]
\end{proposition}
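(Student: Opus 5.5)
The plan is to use the description of $S^{[2]}\subset Q$ as the degeneracy locus $D_6(\varphi)$ of $\varphi\colon\cS_+\to W\otimes\cO_Q$. Both sides of the identification should be read as first-order deformations of this degeneracy locus.

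\textbf{Step 1: construct the map.} The source $H^0(L)\otimes W\cong\Hom(H^0(L)^\vee,W)$ is the tangent space to $\Gr(8,V_+)$ at the point $[H^0(L)^\vee\subset V_+]$. Equivalently, it is the space of first-order deformations of the quotient $V_+\onto W$, modulo $\mathrm{GL}(W)$. A tangent vector $\xi\colon H^0(L)^\vee\to W$ gives a first-order deformation $\varphi_\varepsilon$ of $\varphi$. Since $D_6(\varphi)$ has the expected codimension $(8-6)^2=4$ in $Q$, the locus $D_6(\varphi_\varepsilon)$ is a flat first-order deformation of $S^{[2]}$ inside $Q$. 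This defines a linear map
\[
\alpha\colon H^0(L)\otimes W\to H^0(\cN_{S^{[2]}/Q}).
\]
Concretely, along $S^{[2]}$ the map $\varphi$ has constant corank $2$, with kernel $\cK\subset\cS_+|_{S^{[2]}}$ of rank $2$ and cokernel $\cC$, a rank-$2$ quotient of $W\otimes\cO$. Because the degeneracy locus has expected codimension, the standard description of the normal bundle applies:
\[
\cN_{S^{[2]}/Q}\cong\mathcal{H}om(\cK,\cC)=\cK^\vee\otimes\cC.
\]
Under this isomorphism, $\alpha(\xi)$ is the composite $\cK\to\cS_+\to V_+\otimes\cO\to W\otimes\cO\to\cC$, where the middle arrow is a lift of $\xi$. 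The lift is well defined because $\cK$ already maps to zero in $W\otimes\cO$ under the original $\varphi$.

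\textbf{Step 2: injectivity of $\alpha$.} Suppose $\alpha(\xi)=0$. Then the first-order deformation of $S^{[2]}$ is trivial. I would check that this forces $\xi$ to vanish on the whole of $H^0(L)^\vee$. One route is to restrict to the fibres of $\cK$: the kernel lines span, point by point, the relevant pure spinors. These span $H^0(L)^\vee$, because $S$ is linearly normal in $\bP(H^0(L)^\vee)=\bP^7$. Alternatively, the subspace $H^0(L)^\vee$ is recovered from $D_6(\varphi)$ as the linear span of the associated K3 surface. Hence the map from $\Gr(8,V_+)$ to the Hilbert scheme of $Q$ is generically injective, and a dimension count on its differential gives injectivity.

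\textbf{Step 3: compute $h^0(\cN_{S^{[2]}/Q})=64$.} This is the main obstacle. Given Steps 1 and 2, it suffices to show $h^0(\cK^\vee\otimes\cC)\le 64$. I would pull the computation back to $Q$ and resolve $\cO_{S^{[2]}}$ by the Gulliksen--Negård complex of $\varphi$, whose terms are built from $\cS_+$, $\cS_+^\vee$ and trivial bundles. I would then tensor this resolution with a locally free extension of $\cK^\vee\otimes\cC$ over $Q$. Such an extension can be obtained from the diagram $0\to\cK\to\cS_+\to W\otimes\cO\to\cC\to0$ on $S^{[2]}$, expressing $\cK^\vee\otimes\cC$ through $\cS_+^\vee\otimes W$ together with correction terms. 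The cohomology of the resulting homogeneous bundles $\cS_+^{\otimes a}\otimes(\cS_+^\vee)^{\otimes b}(k)$ on $Q$ is computed by Borel--Weil--Bott. The expected outcome is that the only surviving contribution is
\[
H^0(\cS_+^\vee)\otimes W=V_+^\vee\otimes W,
\]
which has dimension $128$, modulo the $64$-dimensional image of $\Hom(W,W)\oplus\Hom(W,V_+/H^0(L)^\vee)$ coming from reparametrizations. This leaves exactly $H^0(L)\otimes W$.

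\textbf{Consistency check.} As a cross-check I would use the normal sequence
\[
0\to\cT_{S^{[2]}}\to\cT_Q|_{S^{[2]}}\to\cN_{S^{[2]}/Q}\to0,
\]
with $h^0(\cT_{S^{[2]}})=0$ and $h^0(\cT_Q|_{S^{[2]}})=45=\dim\mathfrak{so}(10)$. The target count $64=45+19$ matches the $19$-dimensional family of genus-$7$ K3 surfaces. Once the Bott vanishings are verified, $\alpha$ is an injective map between spaces of equal dimension $64$, hence the claimed canonical isomorphism.
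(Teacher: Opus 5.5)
Your map $\alpha$ is the right one. It agrees with the canonical map in the paper's genus-$8$ analogue (\autoref{54again}). Since $\cK\subset H^0(L)^\vee\otimes\cO$, the section $k\mapsto \xi(k) \bmod \mathrm{Im}(\varphi_x)$ of $\cK^\vee\otimes\cC$ is well defined. Since $\cK$ is the universal secant line, $\cK^\vee\cong L^{[2]}$, and $\alpha$ is the map $H^0(L)\otimes W\to H^0(S,L\otimes\psi_*\pi^*\cC)=H^0(\cN_{S^{[2]}/Q})$ induced by $W\otimes\cO_S\to\psi_*\pi^*\cC$.

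However, Step~2 does not prove injectivity. The condition $\alpha(\xi)=0$ only says that $\xi(\cK_x)\subset\mathrm{Im}(\varphi_x)$ for every $x\in S^{[2]}$, where $\mathrm{Im}(\varphi_x)$ is a codimension-$2$ subspace of $W$ moving with $x$. It does not say that $\xi$ kills $\cK_x$, so the fact that the secant lines span $H^0(L)^\vee$ gives nothing. What is missing is a nondegeneracy statement. For general $p\in S$, a vector $\hat p\in H^0(L)^\vee$ representing $p$ lies in $\cK_x$ for every $x\in\tS_p$, so $\xi(\hat p)\in\bigcap_{x\in\tS_p}\mathrm{Im}(\varphi_x)$. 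You must show this intersection is zero, i.e.\ that $W\to H^0(\tS_p,\cC|_{\tS_p})$ is injective, i.e.\ that $W\otimes\cO_S\into\psi_*\pi^*\cC$. That is exactly the geometric input of Step~1 of \autoref{54again}, done there via the non-degeneracy of quintic scrolls on the Pfaffian cubic fourfold. Your second route, generic injectivity of $\Gr(8,V_+)\dashrightarrow\Hilb(Q)$, would suffice in characteristic $0$. But saying $H^0(L)^\vee$ is ``recovered'' presupposes that you can reconstruct $S\subset\bP(V_+)$ from the subscheme $S^{[2]}\subset Q$, and you never explain how.

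The more serious gap is Step~3. The bound $h^0(\cN_{S^{[2]}/Q})\le 64$ is the real content of the proposition, and you only announce an ``expected outcome'' of a Borel--Weil--Bott computation. Its input, a locally free sheaf on $Q$ restricting to $\cK^\vee\otimes\cC$, is never constructed. What you actually have is a surjection $\cS_+^\vee\otimes W|_{S^{[2]}}\onto\cK^\vee\otimes\cC$ whose kernel you would still have to control. The bookkeeping is also off: since $V_+/H^0(L)^\vee=W$, the space $\Hom(W,W)\oplus\Hom(W,V_+/H^0(L)^\vee)$ is $128$-dimensional, whereas the kernel of $\Hom(V_+,W)\to\Hom(H^0(L)^\vee,W)$ is just $\Hom(W,W)$.

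Your ``consistency check'' is in fact the skeleton of a proof, and it is how the paper argues in genus~$8$:
\begin{itemize}
\item $h^0(\cT_Q|_{S^{[2]}})=45$ follows from the Euler sequence and the corank-$1$ statement for $m_2$.
\item The image of $\alpha$ contains these $45$ directions (the $\mathfrak{so}(10)$-orbit directions). Under Kodaira--Spencer it maps onto the $19$-dimensional space of Hilbert-square deformations, by Mukai's theorem. So $\alpha$ is automatically injective.
\item Hence $h^0(\cN_{S^{[2]}/Q})=45+\mathrm{rk}(\mathrm{KS})\in\{64,65\}$.
\end{itemize}
What remains is to rule out $65$, i.e.\ to show that a general first-order deformation of $(S^{[2]},L_2-2\delta)$ does not stay inside a quadric. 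You cannot quote \autoref{prop:deformation-genus7} for this, because it is deduced from the present proposition. An independent argument is needed. One option is a degeneration as in Step~2 of \autoref{54again}, here to a member $\Sym^2 S'$ of $\cD_4$ with $S'$ a quartic surface. It lies on no quadric because $m_2$ is an isomorphism for it, as in the proof of \autoref{cor:D4-coble-unique}.
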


In this way, $H^0(\cN_{S^{[2]}/Q})$ is canonically identified with the Zariski tangent space to the Grassmannian $\Gr(8, V_+)$ at the point $[H^0(L)^\vee]$. In other words, a first-order deformation of $S^{[2]}$ in $Q$ is the same as a first-order deformation of the chosen $\bP^7$ (determining the K3 surface $S$) in the ambient $\bP^{15}$ for the orthogonal Grassmannian $\OGr_+$. This is the key observation to establish the following:

\begin{proposition}
[{\cite[Proposition 4.22]{projective-models}}]\label{prop:deformation-genus7}
A general member $(X,H)\in \cM_4^{(1)}$ in $\bP^9$ is not contained in any quadric hypersurface of $\bP^9$.
\end{proposition}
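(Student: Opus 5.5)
The plan is a degeneration argument: specialize $(X,H)$ to the Hilbert square $(S^{[2]},L_2-2\delta)$ of a very general genus $7$ K3 surface, and show that the unique quadric $Q$ containing $S^{[2]}$ cannot survive to a first-order deformation of $S^{[2]}$ in $\bP^9$ that is \emph{not} induced by a deformation inside $Q$. By upper semicontinuity of $h^0(\cI_{X/\bP^9}(2))$ in the flat family of embedded fourfolds, and since $h^0(\cI_{S^{[2]}/\bP^9}(2))=1$ by \autoref{prop:eqgenus7}, it suffices to find one direction in which this section does not extend. Since $H^1(\cI_{S^{[2]}}(2))$ may be nonzero (the map $m_2$ has corank $1$), I would argue directly with the obstruction to lifting the quadric.

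Concretely, consider the exact sequence $0\to\cN_{S^{[2]}/Q}\to\cN_{S^{[2]}/\bP^9}\to\cN_{Q/\bP^9}|_{S^{[2]}}=\cO_{S^{[2]}}(2)\to0$. A first-order deformation $v\in H^0(\cN_{S^{[2]}/\bP^9})$ lies in some deformation of a quadric containing $S^{[2]}$ if and only if its image in $H^0(\cO_{S^{[2]}}(2))$ lies in the image of $m_2$, i.e. is the restriction of a quadric $q'$. Indeed, a first-order lift $q+\epsilon q'$ vanishes on $S^{[2]}_\epsilon$ exactly when $q'|_{S^{[2]}}$ equals the image of $v$. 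So the key claim is that the composite $H^0(\cN_{S^{[2]}/\bP^9})\to H^0(\cO_{S^{[2]}}(2))\to \operatorname{coker}(m_2)\cong\bC$ is nonzero, and moreover nonzero on directions mapping nontrivially to $H^1(\cT_{S^{[2]}})$ transverse to the Noether--Lefschetz-type locus of Hilbert squares.

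To prove this, I would count dimensions. By \autoref{prop:64genus7}, $h^0(\cN_{S^{[2]}/Q})=8\cdot8=64$. Deformations of $S^{[2]}$ inside $Q$ correspond to moving the $\bP^7$ in $\Gr(8,V_+)$, hence stay Hilbert squares of genus $7$ K3 surfaces (or are projectively equivalent); modulo $\SO(10)$ ($\dim 45$) they give $64-45=19$ moduli, the divisor of Hilbert squares in the $20$-dimensional $\cM_4^{(1)}$. On the other hand $H^0(\cN_{S^{[2]}/\bP^9})$ surjects onto $H^1(\cT_{S^{[2]}})$ modulo the polarization-preserving condition, i.e. the Kodaira--Spencer image has dimension $20$, and $H^0(\cT_{\bP^9})$ has dimension $99$; so $h^0(\cN_{S^{[2]}/\bP^9})\ge 99+20=119$ (using $H^0(\cT_{S^{[2]}})=0$). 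The directions coming from $H^0(\cN_{S^{[2]}/Q})$ plus $H^0(\cT_{\bP^9})$ span at most $64+99-45=118$ dimensions (the intersection contains $\mathfrak{so}(10)$). Hence there is $v$ not in this span; its image in $H^0(\cO_{S^{[2]}}(2))$ is then not a restricted quadric modulo those coming from $\mathfrak{gl}(10)$ acting on $q$ — and since every quadric $q'$ is of the form $A\cdot q$ for some $A\in\mathfrak{gl}(10)$ (up to the kernel), the induced class in $\operatorname{coker}(m_2)$ must be nonzero. This $v$ is a deformation along which $S^{[2]}$ leaves every quadric.

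Finally, this first-order deformation is unobstructed and realized by the versal family of embedded $\KKK^{[2]}$-fourfolds (embedded deformations in $\bP^9$ are unobstructed since $H^1(\cO(H))=0$ and deformations of $(X,H)$ are unobstructed), so along a one-parameter family tangent to $v$ the quadric does not extend, giving $h^0(\cI_{X_t}(2))=0$ for general $t$, and hence for general $(X,H)\in\cM_4^{(1)}$ by semicontinuity. The main obstacle is the bookkeeping in the dimension count: checking precisely that the subspace of $H^0(\cN_{S^{[2]}/\bP^9})$ whose image in $H^0(\cO(2))$ lies in $\operatorname{im}(m_2)$ equals $H^0(\cN_{S^{[2]}/Q})+\operatorname{im}H^0(\cT_{\bP^9})$, which hinges on the identification of \autoref{prop:64genus7} and the fact that $\mathfrak{gl}(10)\cdot q$ fills all quadrics.
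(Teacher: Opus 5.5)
Your proof is correct and follows essentially the same route as the cited argument the paper alludes to. By \autoref{prop:64genus7}, deformations of $S^{[2]}$ inside $Q$ together with the infinitesimal projectivities (which overlap in $\mathfrak{so}(10)$) fill at most $64+99-45=118$ of the $119$ dimensions of $H^0(\cN_{S^{[2]}/\bP^9})$; this is the same count as ``$64-45=19<20$ moduli''. The step you flag as the main obstacle is in fact immediate: $A\mapsto A\cdot q$ maps $\mathfrak{gl}(10)$ onto all quadrics and $H^0(\cI_{S^{[2]}/\bP^9}(2))=\bC q$, so the directions along which $q$ lifts are exactly $H^0(\cN_{S^{[2]}/Q})+\operatorname{im}H^0(\cT_{\bP^9})$, and only the number $h^0(\cN_{S^{[2]}/Q})=64$ is needed, not its geometric interpretation.
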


In the sequel, we assume that $(S,L)$ is sufficiently general among polarized K3 surfaces of genus $7$ with Picard rank $1$, so that $S^{[2]}=D_6(\varphi)$ coincides with the singular locus of the degeneracy locus $D_7(\varphi)\subset Q$.
This follows from a Bertini-type argument (see \cite[Teorema~2.8]{Ottaviani:codim}), as $\cS_+^\vee$ is globally generated and $\Hom(\cS_+,W\otimes \cO_Q)=\Hom(H^0(L),W)$. Note that
\[
D_7(\varphi)=Z(\det \varphi)\subset Q
\]
is a divisor in the linear system $|\cO_Q(4)|$. 

It is remarkable that the two normal bundles $\cN_{S^{[2]}/Q}$ and $\cN_{Q/\bP}$ both satisfy the relation $\cN\cong \cN^\vee(4)$.
This can be easily checked by hand, but more conceptually follows from the existence of a nowhere degenerate quadratic form by \autoref{prop:coble-quadratic-form}.

\subsection{Proof of \autoref{thm:coble-quartic}}
We begin with the following preparatory lemma, which shows the uniqueness of the quartics $D_7(\varphi)\subset Q$ and $Z(q^2)\subset \bP^9$ singular along $S^{[2]}$.

\begin{lemma}\label{prep-lemma}
    Consider $(S^{[2]},L_2-2\delta)$ for $(S,L)$ a general polarized K3 surface of genus $7$, and let $Q\coloneqq Z(q)\subset \bP^9\coloneqq\bP(H^0(S^{[2]},L_2-2\delta)^\vee)$ be the unique quadric containing $S^{[2]}$. Then:
    \begin{enumerate}
        \item\label{prep-lemma-1} $H^0(Q,\cI_{S^{[2]}/Q}^2(4))\cong\bC$, and is generated by the first degeneracy locus $D_7(\varphi)\in|\cO_Q(4)|$.
        Furthermore, we have the vanishing $H^1(Q,\cI_{S^{[2]}/Q}^2(4))=0$. 
        \item\label{prep-lemma-2} $H^0(\bP^9,\cI_{S^{[2]}/\bP}^2(4))\cong\bC$, and is generated by the non-reduced quartic $Z(q^2)\in |\cO_{\bP}(4)|$.
        Furthermore, we have the vanishing $H^1(\bP^9,\cI_{S^{[2]}/\bP}^2(4))=0$.
    \end{enumerate}
\end{lemma}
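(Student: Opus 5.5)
The plan is to prove \eqref{prep-lemma-1} on $Q$ first, via the sequence $0\to\cI^2_{S^{[2]}/Q}(4)\to\cI_{S^{[2]}/Q}(4)\to\cN^\vee_{S^{[2]}/Q}(4)\to0$, and then deduce \eqref{prep-lemma-2} from it by comparing squares of ideals in $\bP^9$ and in $Q$. Throughout, write $X=S^{[2]}$.

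\textbf{Step 1: the cohomology of $\cI_{X/Q}(4)$.} Since $D_7(\varphi)$ is singular exactly along $X$, \autoref{prop:coble-quadratic-form} (or rather its version relative to $Q$) gives $\cN^\vee_{X/Q}(4)\cong\cN_{X/Q}$. By \autoref{prop:64genus7},
\[
h^0(\cN^\vee_{X/Q}(4))=h^0(\cN_{X/Q})=\dim H^0(L)\otimes W=64.
\]
By \autoref{prop:eqgenus7}.\eqref{prop:eqgenus7-2} the restriction $H^0(\cO_{\bP^9}(4))\to H^0(\cO_X(4))$ is onto, so $H^0(\cO_Q(4))\to H^0(\cO_X(4))$ is onto as well, i.e.\ $H^1(\cI_{X/Q}(4))=0$. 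For the dimension, $h^0(\cO_Q(4))=\binom{13}{4}-\binom{11}{2}=660$. Riemann--Roch for $\KKK^{[2]}$-type gives $h^0(\cO_X(4))=\binom{q(4H)/2+3}{2}=\binom{35}{2}=595$, using $q(4H)=64$ and Kodaira vanishing. Hence $h^0(\cI_{X/Q}(4))=65$.

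\textbf{Step 2: surjectivity of $H^0(\cI_{X/Q}(4))\to H^0(\cN_{X/Q})$.} This is the analogue of \eqref{eq:degree-d-to-first-order} with $Q$ as ambient. By \autoref{prop:64genus7}, every first-order deformation of $X$ in $Q$ is $D_6(\varphi+t\psi)$ for some $\psi\in\Hom(H^0(L)^\vee,W)$. Along it, the quartic $\det(\varphi+t\psi)$ stays singular along the deformed degeneracy locus. By \autoref{lem:tangential}, the tangential quartic $\frac{d}{dt}\det(\varphi+t\psi)|_{t=0}$ lies in $H^0(\cI_{X/Q}(4))$. I would check in local coordinates, where $\varphi$ has corank $2$ along $X$, that its image in $\cN^\vee_{X/Q}(4)\cong\cN_{X/Q}$ is exactly the normal section induced by $\psi$. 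Granting this, the map is surjective, so $h^0(\cI^2_{X/Q}(4))=65-64=1$, spanned by $\det\varphi$. Moreover $H^1(\cI^2_{X/Q}(4))$ injects into $H^1(\cI_{X/Q}(4))=0$. This proves \eqref{prep-lemma-1}.

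\textbf{Step 3: passing to $\bP^9$.} Locally $q$ is part of a regular sequence generating $\cI_{X/\bP}$, so $(\cI^2_{X/\bP}:q)=\cI_{X/\bP}$. This gives an exact sequence
\[
0\to\cI_{X/\bP}(2)\xrightarrow{\cdot q}\cI^2_{X/\bP}(4)\to\cI^2_{X/Q}(4)\to0.
\]
By \autoref{prop:eqgenus7}, $H^0(\cI_{X/\bP}(2))=\bC q$ and, since $m_2$ has corank $1$ between spaces of equal dimension, $H^1(\cI_{X/\bP}(2))\cong\bC$. The long exact sequence then reads
\[
0\to\bC q^2\to H^0(\cI^2_{X/\bP}(4))\to\bC\det\varphi\xrightarrow{\ \partial\ }\bC\to H^1(\cI^2_{X/\bP}(4))\to0,
\]
so both claims in \eqref{prep-lemma-2} are equivalent to $\partial\neq0$. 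In other words, we must show that no quartic $F$ on $\bP^9$ singular along $X$ restricts to $\det\varphi$ on $Q$.

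\textbf{The main obstacle: $\partial\ne0$.} This is the step I expect to be hardest. If $F=\widetilde{\det\varphi}+qG$ with $G$ a quadric, then $dF|_X=0$ forces $G|_X=-\partial_\nu\widetilde{\det\varphi}/\partial_\nu q$, where $\nu$ is a normal direction to $Q$. So $\partial(\det\varphi)$ is the class of this intrinsically defined section of $\cO_X(2)$ in $\operatorname{coker} m_2\cong\bC$. I would first try to identify this class by means of the Gulliksen--Negård resolution of $\cI_{X/Q}$ and the $\operatorname{Spin}$-equivariance of the construction, showing it is nonzero for all $(S,L)$. Failing that, I would verify $\partial\neq0$ for one explicit genus-$7$ K3 surface by a {\sc Macaulay2} computation, and conclude for general $(S,L)$ by semicontinuity of $h^0(\cI^2_{X/\bP}(4))$.
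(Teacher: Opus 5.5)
Your Steps 1--3 are sound. For part (1) you reprove the surjection $H^0(\cI_{X/Q}(4))\onto H^0(\cN^\vee_{X/Q}(4))\cong H^0(L)\otimes W$, which the paper simply takes from \cite[Remark~4.15]{projective-models}. Your argument works, with one adjustment. \autoref{lem:tangential} only gives a set-theoretic containment, so use instead $\frac{d}{dt}\det(\varphi+t\psi)|_{t=0}=\mathrm{tr}(\mathrm{adj}(\varphi)\,\psi)$. Its coefficients are $7\times 7$ minors of $\varphi$, so it lies in $\cI_{X/Q}$ scheme-theoretically. Locally, with $\varphi\sim\mathrm{diag}(I_6,A)$, its class modulo $\cI^2_{X/Q}$ is $\mathrm{tr}(\mathrm{adj}(A)\,\psi_{22})$, where $\psi_{22}$ is the corresponding $2\times 2$ block of $\psi$. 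This is the image of the normal section induced by $\psi$ under the isomorphism $\cN_{X/Q}\cong\cN^\vee_{X/Q}(4)$ given by the Hessian of $\det\varphi$, as you expected. Your sequence $0\to\cI_{X/\bP}(2)\xrightarrow{\cdot q}\cI^2_{X/\bP}(4)\to\cI^2_{X/Q}(4)\to 0$ is exactly the first column of the paper's diagram, and the reduction of both claims in (2) to $\partial\neq 0$ is correct. But that is precisely the step you leave open. The Spin-equivariance idea is only a hope and the computer check is not carried out, so as written part (2) is not proved.

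The missing idea is elementary. It uses the degree-$3$ part of \autoref{prop:eqgenus7}.(\ref{prop:eqgenus7-2}), which you only used in degree $4$: every cubic containing $X$ is a multiple of $q$. Let $Z(f)$ be a quartic singular along $X$. Each polar $\partial f/\partial x_i$ is a cubic vanishing on $X$, hence divisible by $q$, so $Z(f)$ is singular along all of $Q$. By Euler's identity $f=qg$ for a quadric $g$, and then $q$ divides $g\cdot\partial q/\partial x_i$ for all $i$. Since $q$ is irreducible and does not divide the linear forms $\partial q/\partial x_i$, we get $q\mid g$, i.e.\ $f\in\bC q^2$. Thus $H^0(\cI^2_{X/\bP}(4))=\bC q^2$. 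In your long exact sequence the map to $H^0(\cI^2_{X/Q}(4))=\bC\det\varphi$ is therefore zero, so $\partial$ is injective and hence an isomorphism $\bC\to\bC$. Then $H^1(\cI^2_{X/\bP}(4))=\operatorname{coker}\partial=0$, using $H^1(\cI^2_{X/Q}(4))=0$ from part (1). This is how the paper argues: you never need to compute the class $\partial(\det\varphi)\in\operatorname{coker} m_2$, because its nonvanishing is forced by the cubic equations of $X$.
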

\begin{proof}
Part \eqref{prep-lemma-1} follows from \cite[Remark 4.15]{projective-models} by observing that, under the isomorphism $\cN_{S^{[2]}/Q}^\vee(4)\cong \cN_{S^{[2]}/Q}$ followed by the identification $H^0(\cN_{S^{[2]}/Q}) \cong H^0(L)\otimes W$ (given in \autoref{prop:64genus7}), the surjection
\[
H^0(Q,\cI_{S^{[2]}/Q}(4))\onto H^0(L)\otimes W
\]
corresponds to the global sections of the natural map $\cI_{S^{[2]}/Q}(4)\onto \cN_{S^{[2]}/Q}^\vee(4)$.

Now we prove \eqref{prep-lemma-2}.
For a quartic hypersurface $F\coloneqq Z(f)\in |\cI^2_{S^{[2]}/\bP}(4)|$ that is singular along $S^{[2]}$, its partial derivatives $\frac{\partial f}{\partial x_i}$ must all lie in $H^0(\cI_{S^{[2]}/\bP}(3))$, which coincides with $H^0(\cI_{Q/\bP}(3))=H^0(\cO_{\bP}(1))\cdot q$ in view of \autoref{prop:eqgenus7}.(\ref{prop:eqgenus7-2}).
Therefore, $F$ is singular along $Q$ as well and so it must be $Z(q^2)$.
For the vanishing $H^1(\bP^9,\cI_{S^{[2]}/\bP}^2(4))=0$, we examine the following commutative diagram with exact rows and columns
\[
\begin{tikzcd}
 &  0\ar[d] & 0\ar[d] & 0\ar[d] & \\
 0\ar[r] & \cI_{S^{[2]}/\bP}(2)\ar[r]\ar[d,"\cdot q"] & \cO_\bP(2)\ar[r,"m_2"]\ar[d] & \cO_{S^{[2]}}(2) \ar[r]\ar[d] & 0\\
0\ar[r] & \cI_{S^{[2]}/\bP}^2(4)\ar[r]\ar[d] & \cI_{S^{[2]}/\bP}(4)\ar[r]\ar[d] & \cN_{S^{[2]}/\bP}^\vee(4) \ar[r]\ar[d] & 0\\
0\ar[r] & \cI_{S^{[2]}/Q}^2(4)\ar[r]\ar[d]& \cI_{S^{[2]}/Q}(4)\ar[d]\ar[r] & \cN_{S^{[2]}/Q}^\vee(4)\ar[d]\ar[r] & 0\\
 & 0 & 0 & 0 & 
\end{tikzcd}
\]
By looking at the first column, we obtain two isomorphisms
$H^0(\cI_{S^{[2]}/\bP}(2))\simto H^0(\cI^2_{S^{[2]}/\bP}(4))$ that maps $q$ to $q^2$,
and $H^0(\cI^2_{S^{[2]}/Q}(4))\simto H^1(\cI_{S^{[2]}/\bP}(2))$ that maps the hypersurface $D_7(\varphi)$ to a generator of the cokernel of the restriction map $m_2\colon H^0(\cO_\bP(2))\to H^0(\cO_{S^{[2]}}(2))$.
Since $H^2(\cI_{S^{[2]}/\bP}(2))=0$, we can conclude that $H^1(\bP^9,\cI^2_{S^{[2]}/\bP}(4))=H^1(Q,\cI^2_{S^{[2]}/Q}(4))=0$.
\end{proof}

\begin{proof}[Proof of \autoref{thm:coble-quartic}]
Now we consider a general $(X,H)\in\cM_4^{(1)}$. We want to prove the existence of a unique quartic hypersurface $Y\subset \bP^9=\bP(H^0(X,H)^\vee)$ such that $X=\Sing(Y)$ scheme-theoretically.

First we prove that $h^0(\bP^9,\cI_{X/\bP}^2(4))=1$, namely that there is a unique quartic $Y\subset\bP^9$ with $X\subset\Sing(Y)$. To this end, we consider the short exact sequence
\[
0\to \cI_{X/\bP}^2(4)\to \cI_{X/\bP}(4)\to \cN_{X/\bP}^\vee(4)\to 0.
\]
Quartic normality of $X$ in $\bP^9$ tells us that $h^0(\bP^9,\cI_{X/\bP}(4))=120$, $h^1(\bP^9,\cI_{X/\bP}(4))=0$.
Thus we have
\begin{equation}
\label{eq:120}
h^0(\bP^9,\cI_{X/\bP}^2(4)) + h^0(X,\cN_{X/\bP}^\vee(4)) = 120 + h^1(\bP^9,\cI_{X/\bP}^2(4)).
\end{equation}
Now we let $(X,H)$ specialize to $(S^{[2]},L_2-2\delta)$, for $(S,L)$ a very general polarized K3 surface of genus~$7$.
In this case, \eqref{eq:120} still holds since $S^{[2]}$ is quartically normal in $\bP^9$ by \autoref{prop:eqgenus7}, and we have $h^0(\bP^9,\cI_{S^{[2]}/\bP}^2(4))=1$ and $h^1(\bP^9,\cI_{S^{[2]}/\bP}^2(4))=0$ by \autoref{prep-lemma}.(\ref{prep-lemma-2}).
Using upper semicontinuity of cohomology ranks and in view of the relation \eqref{eq:120}, these equalities must hold for a general $(X,H)\in\cM_4^{(1)}$ as well.

Next we show that $Y$ is singular \emph{exactly} along $X$.
Consider a general one-parameter flat family
\[
\begin{tikzcd}
\cX\ar[d]\subset \bP^9\times\Delta\\
\Delta
\end{tikzcd}
\]
with central fiber $\cX_0=S^{[2]}$ (embedded in $\bP^9$ via $L_2-2\delta$), where $(S,L)$ is a general polarized K3 surface of genus~$7$.
Assume that $\cX_0$ is contained in the standard quadric $Q=Z(q)$ with $q\coloneqq x_0^2+\dots +x_9^2$.
Recall from \autoref{prep-lemma}.(\ref{prep-lemma-2}) that $q^2$ is the unique quartic in $\bP^9$ which is singular along $\cX_0$.

Without loss of generality, we assume that the base $\Delta$ is the formal scheme $\Spec \bC[[t]]$ and denote by $\eta$ its generic point.
We obtain a quartic hypersurface $\cY\coloneqq Z(F)\subset \bP^9\times\Delta$, where
\[
F\coloneqq q^2+t\cdot f_1+t^2\cdot f_2 +\dots \in H^0(\bP^9,\cO(4))\otimes_\bC \bC[[t]],
\]
such that $\cY_\eta$ is the unique quartic hypersurface in $\bP^9_{k(\eta)}$ whose singular locus contains $\cX_\eta$.
It suffices to show that $\Sing(\cY_\eta)$ coincides with $\cX_\eta$.
Note that, by the generality assumption, the quartic $f_1$ is not a multiple of $q$.
Otherwise, one may write $f_1=4q\cdot \sum_{i=0}^9\ell_i\cdot x_i$, where $\ell_i=\ell_i(x)$ are linear forms, and consider the linear map $\ell\colon x\mapsto (\ell_0(x),\dots,\ell_9(x))$.
Then we may check that in first-order we have
\[
q^2(x) + \varepsilon \cdot f_1(x) = q^2(x+\varepsilon \cdot \ell(x)).
\]
Therefore, up to taking the first-order automorphism $x\mapsto x+\varepsilon\cdot \ell(x)$ of $\bP^9$,
the family of quartic hypersurface we get is identically $q^2$,
whence the corresponding first-order deformation must be entirely contained in $Q$, contradicting \autoref{prop:deformation-genus7}.

Consider the intermediate subvariety
\[
\widetilde{\cZ}\coloneqq Z(\tfrac{\partial F}{\partial x_i})_{i=0,\dots,9}
\]
whose fiber $\widetilde{\cZ}_\eta$ over the generic point $\eta$ coincides with $\Sing(\cY_\eta)$, while the fiber $\widetilde{\cZ}_0$ over $0$ coincides with $\Sing(\cY_0)=Q$.
In particular, $\widetilde{\cZ}$ contains $\cX$.
Let $\cZ$ denote the schematic closure of $\widetilde{\cZ}_\eta$ in $\widetilde{\cZ}$ (namely, the union of the irreducible components of $\widetilde\cZ$ dominating $\Delta$); in particular, $\cZ$ is flat over $\Delta$.
Note that $\cZ$ contains $\cX$, since $\cX$ is the schematic closure of $\cX_\eta\subset \widetilde{\cZ}_\eta$. Therefore, $\cZ_0$ contains $\cX_0$ while being contained in $\widetilde{\cZ}_0 = Q$ .

Now observe that, for every $i$ and $j$, the polynomial
\[
\begin{aligned}
x_i\cdot \tfrac{\partial F}{\partial x_j}-x_j\cdot \tfrac{\partial F}{\partial x_i}
&=x_i \cdot \left(4 q\cdot x_j + t\cdot \tfrac{\partial f_1}{\partial x_j} + O(t^2)\right)
-x_j \cdot \left(4 q\cdot x_i + t\cdot \tfrac{\partial f_1}{\partial x_i} + O(t^2)\right)\\
&=t\cdot \left(x_i\cdot \tfrac{\partial f_1}{\partial x_j} -x_j \cdot \tfrac{\partial f_1}{\partial x_i} + O(t)\right)
\end{aligned}
\]
vanishes along $\cZ$, hence so does the polynomial
\[
x_i\cdot \tfrac{\partial f_1}{\partial x_j} -x_j \cdot \tfrac{\partial f_1}{\partial x_i} + O(t).
\]
In particular, the quartic $f_{ij}\coloneqq x_i\cdot \tfrac{\partial f_1}{\partial x_j} -x_j \cdot \tfrac{\partial f_1}{\partial x_i}$ vanishes along $\cZ_0$ for every $i$ and $j$.

Furthermore, the quartic $f_1$ also vanishes along $\cZ_0$, as Euler's identity for homogeneous polynomials implies that for every $i$
\[
\begin{aligned}
x_i\cdot 4 f_1 &= x_i \cdot \sum_{j}x_j\tfrac{\partial f_1}{\partial x_j} = \sum_{j}x_j\cdot x_i\tfrac{\partial f_1}{\partial x_j}\\
&= \sum_{j}x_j\cdot \left(x_j\tfrac{\partial f_1}{\partial x_i}+f_{ij}\right)= q\cdot\tfrac{\partial f_1}{\partial x_i}+\sum_{j}x_j\cdot f_{ij}
\end{aligned}
\]
vanishes along $\cZ_0$.

Since $\cZ_0$ contains $\cX_0=S^{[2]}$, we see that the quartic $f_1$ as well as the $45$ quartics $f_{ij}$ ($i<j$) all vanish along $S^{[2]}$. This means that the quartic hypersurface $Z(q,f_1)\in|\cO_Q(4)|$ in $Q$ is singular along $S^{[2]}$, as the quartics $2f_{ij}$ are the $2\times2$-minors of the Jacobian matrix
\begin{equation}
\label{eq:jacobian-matrix}
\begin{pmatrix}
\frac{\partial f_1}{\partial x_0}  & ... & \frac{\partial f_1}{\partial x_9} \\[1ex]
\frac{\partial q}{\partial x_0}  & ... & \frac{\partial q}{\partial x_9} 
\end{pmatrix}.
\end{equation}
Therefore, it follows from \autoref{prep-lemma}.(\ref{prep-lemma-1}) that $Z(q,f_1)$ is the first degeneracy locus $D_7(\varphi)\subset Q$. In particular, $S^{[2]}$ is exactly the singular locus of $Z(q,f_1)$, and so $S^{[2]}$ is cut out scheme-theoretically by $Q$, $Z(f_1)$, and the $45$ quartics $f_{ij}$. This implies that $\cZ_0$ coincides with $\cX_0=S^{[2]}$.
Comparing the Hilbert polynomials (using the lexicographical order), we have
\[
h_{\cX_\eta}\le h_{\cZ_\eta}= h_{\cZ_0}= h_{\cX_0},
\]
where the first inequality is due to the inclusion $\cX_\eta\subset\cZ_\eta$, and the central equality follows from the flatness of $\cZ\to \Delta$.
But since $h_{\cX_\eta}=h_{\cX_0}$ as we assumed that the family $\cX\to \Delta$ is flat, the first inequality must be an equality. This proves that $\cX_\eta=\cZ_\eta=\Sing(\cY_\eta)$ as desired.
\end{proof}
\begin{remark}
There are two different ways to view the $45$ quartics $f_{ij}$ appearing in the proof of \autoref{thm:coble-quartic}. Indeed, they can be seen as the $2\times2$-minors of the Jacobian matrix \eqref{eq:jacobian-matrix} for the quartic hypersurface $Z(q,f_1)$ in $Q$, but also as quartic equations obtained from the natural multiplication map
\[
m\colon H^0(\cO_{\bP}(1))\otimes \langle \tfrac{\partial q^2}{\partial x_0},\dots,\tfrac{\partial q^2}{\partial x_9} \rangle \to H^0(\cO_{\bP}(4))
\]
by considering, for each linear relation $\tfrac{\partial q}{\partial x_i}\otimes\tfrac{\partial q^2}{\partial x_j}- \tfrac{\partial q}{\partial x_j}\otimes\tfrac{\partial q^2}{\partial x_i}$ in $\ker(m)$,
the formal replacement of $q^2$ by~$f_1$ to define the quartic polynomial
$\tfrac{\partial q}{\partial x_i}\cdot\tfrac{\partial f_1}{\partial x_j}- \tfrac{\partial q}{\partial x_j}\cdot\tfrac{\partial f_1}{\partial x_i}$.
In the case of square $6$, working with minors will no longer be convenient due to the increased codimension of the intermediate subvariety $\Gr(2,6)\subset \bP^{14}$.
The second viewpoint of syzygies, although it may seem less natural, is better suited and will be adopted in the proof of \autoref{thm:coble-cubic} below.
\end{remark}

\section{Proof of \autoref{thm:coble-cubic}} \label{sec:thmB}
In this section we turn our attention to the case of square $6$ and the existence of the Coble type cubic. This case more or less follows the same line of reasoning as in square $4$, the key point being a specialization to the Hilbert square of a very general polarized K3 surface of genus $8$, whose geometry will be briefly reviewed at the beginning. 
Nevertheless, a few extra complications appear, and will be circumvented along Sections \ref{subsec:cohomology} and \ref{subsec:smoothness}.

\subsection{Hilbert squares of K3 surfaces of genus \texorpdfstring{$8$}{8}}
\label{subsec:genus-8}
Let $(S,L)$ be a polarized K3 surface of genus $8$ with $\Pic(S)=\bZ\cdot L$. The line bundle $L_2-2\delta$ on $S^{[2]}$ is very ample (\cite[Theorem~A]{projective-models}) of Beauville--Bogomolov square $6$ and divisibility $1$. The corresponding embedding
\begin{equation}\label{eq:S2-genus8}
    S^{[2]}\into \bP^{14}=\bP H^0(L_2-2\delta)^\vee
\end{equation}
has a rich geometry that we briefly review, following \cite[Section 5]{projective-models}.

Fix a $6$-dimensional vector space $V_6$, and consider the Grassmannian $\Gr(2,V_6^\vee)\subset \bP(\bw2 V_6^\vee)$ in its Plücker embedding.
Mukai \cite{mukai-models} proved that the embedding 
\[
S\into \bP(H^0(L)^\vee)=\bP^8
\]
is a $\bP^8$-linear section of $\Gr(2,V_6^\vee)$, under which the universal bundle of $\Gr(2,V_6^\vee)$ pulls back to $E^\vee$, where $E$ is the rigid stable vector bundle on $S$ with Mukai vector $v(E)=(2,L,4)$. In particular $V_6\cong H^0(S,E)$ canonically, and $(S,L)$ is determined by the $9$-dimensional vector subspace
$H^0(L)^\vee\into \bw2 H^0(E)^\vee$.
We denote by $W$ the $6$-dimensional quotient
\[
W\coloneqq\bw2 H^0(E)^\vee/H^0(L)^\vee.
\]
It turns out that there is a canonical identification $H^0(L_2-2\delta)\cong \bw2 V_6^\vee$, under which \eqref{eq:S2-genus8} is identified with a closed immersion
\[
S^{[2]}\into \bP(\bw2 V_6)
\]
which factors through $G\coloneqq\Gr(2,V_6)$. The immersion $S^{[2]}\into G$ admits two realizations:
\begin{itemize}
    \item It sends a length-$2$ subscheme $\xi\in S^{[2]}$ to the $2$-dimensional subspace 
    \[
    H^0(S,E\otimes \cI_\xi)\subset H^0(S,E)=V_6.
    \]
    In particular, the restriction $\cQ|_{S^{[2]}}$ of the universal quotient bundle $\cQ$ on $G$ equals the tautological bundle $E^{[2]}$ defined by $E$ on $S^{[2]}$.

    \item Considering the morphism of rank-$6$ vector bundles
    \[
    \varphi\colon \bw2\cQ^\vee \into \bw2 V_6^\vee \otimes \cO_G\onto W\otimes \cO_G=\cO_G^{\oplus 6}
    \]
    on $G$, the embedded $S^{[2]}\subset G$ coincides (scheme-theoretically) with the rank-$4$ degeneracy locus $D_4(\varphi)$.
\end{itemize}

Again, the degeneracy locus realization of $S^{[2]}$ provides a good understanding for the equations of $S^{[2]}$ in $\bP^{14}$, via the Gulliksen--Negård complex resolving $\cI_{S^{[2]}/G}$ in $G$:

\begin{proposition}[{\cite[Section 5.3]{projective-models}}]\label{prop:eqgenus8}
For every $d\geq2$, the vanishings
\[
H^1(Q,\cI_{S^{[2]}/Q}(d))=0=H^1(\bP^{14},\cI_{S^{[2]}/\bP^{14}}(d))
\]
hold. In particular, the natural restriction map
\[
m_d\colon H^0(\cO_{\bP^{14}}(d))\to H^0(\cO_{S^{[2]}}(d))
\]
is surjective. Moreover, $\ker(m_2)=H^0(\cI_{X/\bP^{14}}(2))$ is given by the $15$-dimensional vector space $H^0(\cI_{X/G}(2))$ of quadratic equations for $G$.
\end{proposition}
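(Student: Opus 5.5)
The plan is to derive everything from the Gulliksen--Negård resolution of $\cI_{S^{[2]}/G}$. Here $G=\Gr(2,V_6)$; the ambient written $Q$ in the first vanishing should be read as $G$. The vanishings on $\bP^{14}$ will then follow from the arithmetically Cohen--Macaulay property of the Plücker embedding of $G$.

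\textbf{Step 1: the resolution.} For a map $\varphi\colon \cE\to\cF$ of rank-$6$ bundles whose corank-$2$ locus $D_4(\varphi)$ has the expected codimension $4$, the Gulliksen--Negård complex resolves $\cI_{D_4(\varphi)}$. Writing $\cL\coloneqq\det\cE^\vee\otimes\det\cF$, it is a four-term locally free resolution
\[
0\to \cL^{-2}\to \cE\otimes\cF^\vee\otimes\cL^{-1}\to \cG\otimes\cL^{-1}\to \cE\otimes\cF^\vee\to \cI_{D_4(\varphi)}\to 0,
\]
up to the precise identification of the middle term. That term $\cG$ is an extension built from the traceless parts of $\cE\otimes\cE^\vee$ and $\cF\otimes\cF^\vee$, and the second map is essentially given by $\bw{5}\varphi$. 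In our situation $\cE=\bw2\cQ^\vee$ and $\cF=W\otimes\cO_G$ is trivial. Since $\cQ$ has rank $4$, we have $\det\bw2\cQ=\cO_G(3)$, hence $\cL=\cO_G(3)$. After twisting by $\cO_G(d)$ the terms become
\[
\cO_G(d-6),\quad \bw2\cQ^\vee\otimes W^\vee(d-3),\quad \cG(d-3),\quad \bw2\cQ^\vee\otimes W^\vee(d),
\]
where $\cG$ is an extension of trivial bundles by $\mathfrak{sl}(\bw2\cQ)$.

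\textbf{Step 2: the vanishing on $G$.} Chasing the resolution gives $H^1(G,\cI_{S^{[2]}/G}(d))=0$ once the following vanish: $H^1$ of the last term, $H^2$ of the third, $H^3$ of the second, and $H^4$ of $\cO_G(d-6)$. The last is clear, because $H^4(\cO_G(k))=0$ for all $k$ on the $8$-dimensional $G$.

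For the remaining terms I would apply Borel--Weil--Bott on $\Gr(2,6)$. The relevant bundles are twists of $\bw2\cQ^\vee$ and of $\cQ\otimes\cQ^\vee$-type Schur functors, and $\bw2\cQ^\vee\cong\bw2\cQ(-1)$. The weights are small, so only finitely many cases need checking. I expect that for $d\ge2$ all of these bundles have no cohomology in the relevant positive degrees.

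The same computation gives $H^0(\cI_{S^{[2]}/G}(2))=0$: for $d=2$, $H^0$ of the last term $\bw2\cQ^\vee(2)\otimes W^\vee$ should be nonzero, but I would check by dimension counting that it is killed by the incoming map. A cleaner route is to use that $S^{[2]}$ lies in no quadric section of $G$, which would follow from the count in Step 3.

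This Bott bookkeeping is the main obstacle. The middle term $\cG$ is not irreducible, and the cases $d=2,3$ sit close to the boundary of the vanishing range, so each irreducible summand must be checked. This is where I would spend the effort, and would double-check with Macaulay2 if needed.

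\textbf{Step 3: passing to $\bP^{14}$ and identifying quadrics.} From
\[
0\to\cI_{G/\bP}(d)\to\cI_{S^{[2]}/\bP}(d)\to\cI_{S^{[2]}/G}(d)\to0,
\]
the vanishing $H^1(\cI_{S^{[2]}/\bP}(d))=0$ follows from $H^1(\cI_{S^{[2]}/G}(d))=0$ together with $H^1(\cI_{G/\bP}(d))=0$. The latter holds by projective normality of the Plücker embedding, and $G$ is ACM with ideal generated by the $15$ Plücker quadrics. Surjectivity of $m_d$ is then immediate.

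For the kernel of $m_2$, I would compare dimensions. On one side, $h^0(\cO_{\bP^{14}}(2))=120$. On the other, Kodaira vanishing and the Riemann--Roch formula $\chi(D)=\binom{q(D)/2+3}{2}$ on $\KKK^{[2]}$-type, with $q(2H)=24$, give $h^0(\cO_{S^{[2]}}(2))=\binom{15}{2}=105$. So by surjectivity $\dim\ker m_2=15$. Since $H^0(\cI_{G/\bP}(2))\subset\ker m_2$ is also $15$-dimensional, the two coincide. In particular this forces $H^0(\cI_{S^{[2]}/G}(2))=0$, consistent with Step 2.
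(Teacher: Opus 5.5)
Your route is the intended one. The paper only quotes this result, but the Gulliksen--Negård complex it displays in the proof of \autoref{lem:190} is exactly the resolution you want, and your Step~3 is correct: projective normality of $G$ gives surjectivity of $m_d$, and the count $120-105=15$ forces $\ker m_2=H^0(\cI_{G/\bP}(2))$.

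There is, however, a concrete error in Step~1: the term mapping onto the ideal is not $\cE\otimes\cF^\vee$. The ideal of $D_4(\varphi)$ is generated by the $5\times5$ minors, i.e.\ by the image of $\bw5\cE\otimes\bw5\cF^\vee\cong\cE^\vee\otimes\cF\otimes\cL^{-1}$. Self-duality of the complex gives the same answer, since this term must be the dual of $\cE\otimes\cF^\vee\otimes\cL^{-1}$ twisted by $\cL^{-2}$. Since $\bw2\cQ\cong\bw2\cQ^\vee(1)$, this term is $\bw2\cQ^\vee(-2)^{\oplus6}$, which is the term $M(-2)^{\oplus6}$ in the paper; yours is off by $\cO_G(2)$. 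This is why your $d=2$ discussion went astray. With your term, $H^0(\bw2\cQ^\vee(2))\neq0$, while $H^0$ of the incoming term $\cG(-1)$ vanishes. So nothing could kill it, and Step~2 would contradict Step~3. With the correct term, at $d=2$ the generator term is $(\bw2\cQ^\vee)^{\oplus6}$, which has no cohomology at all.

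The Bott computation you postponed is then short. After twisting by $\cO_G(d)$, the four terms are
\[
\cO_G(d-6),\quad \bw2\cQ^\vee(d-3)^{\oplus6},\quad \big(\Sigma^{2,2}\cQ^\vee\oplus\Sigma^{2,1,1}\cQ^\vee\big)(d-2)\oplus\cO_G(d-3)^{\oplus35},\quad \bw2\cQ^\vee(d-2)^{\oplus6}.
\]
For $\Sigma^\beta\cQ^\vee(k)$, adding $\rho=(6,5,4,3,2,1)$ to the weight $(k,k,\beta)$ gives
\[
(k+6,k+5,5,4,2,1),\quad (k+6,k+5,6,5,2,1),\quad (k+6,k+5,6,4,3,1)
\]
for $\beta=(1,1),(2,2),(2,1,1)$ respectively. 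The range forced by $d\ge2$ is $k\ge-1$ for $\bw2\cQ^\vee$ and $k\ge0$ for the other two. In that range the weights are singular for $k\in\{-1,0\}$ (resp.\ $k\in\{0,1\}$) and dominant otherwise. The line bundles $\cO_G(k)$ with $k\ge-4$ have no higher cohomology.

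So every term has vanishing higher cohomology. This gives $H^i(G,\cI_{S^{[2]}/G}(d))=0$ for all $i\ge1$ and $d\ge2$, and also $H^0(G,\cI_{S^{[2]}/G}(2))=0$ directly. Your $H^1$ claim would have survived the wrong twist by accident, since $\bw2\cQ^\vee(d)$ has no $H^1$ either. But the resolution must be corrected for the argument to be sound.
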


Again, from now on we will assume that $(S,L)$ is sufficiently general, so that by a Bertini-type argument, $S^{[2]}$ is the singular locus of the divisor $D_5(\varphi)=Z(\det\varphi)\in|\cO_G(3)|$ in $G$.
We recall also that $G$ is the singular locus of the Pfaffian cubic hypersurface $\Pf\coloneqq Z(\pf)\subset \bP^{14}$.
Hence the two normal bundles $\cN_{S^{[2]}/G}$ and $\cN_{G/\bP}$ both satisfy the relation $\cN\cong \cN^\vee(3)$, which follows from the existence of a nowhere degenerate quadratic form by \autoref{prop:coble-quadratic-form}.

\subsection{Cohomology computation}
\label{subsec:cohomology}

We keep the same notations as in the previous subsection. We begin with a natural interpretation of the first-order deformations of the Hilbert square $S^{[2]}$ in $G$, which is the analogue of \autoref{prop:64genus7} in genus $8$.

\begin{proposition}\label{54again}
There is a canonical identification
\[
H^0(S^{[2]}, \cN_{S^{[2]}/G})\cong H^0(S,L)\otimes W.
\]
\end{proposition}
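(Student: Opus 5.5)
We will compute $H^0(\cN_{S^{[2]}/G})$ through the degeneracy locus description $S^{[2]}=D_4(\varphi)$, following the genus-$7$ argument of \cite[Proposition~4.19]{projective-models}. The first step is to construct the natural map. A first-order deformation of the $9$-dimensional subspace $H^0(L)^\vee\subset \bw2 V_6^\vee$ is an element of
\[
\Hom(H^0(L)^\vee, W)=H^0(L)\otimes W,
\]
the tangent space to $\Gr(9,\bw2V_6^\vee)$. Such a deformation yields a deformed morphism $\varphi_\varepsilon\colon \bw2\cQ^\vee\to W_\varepsilon\otimes\cO_G$, and hence a deformed degeneracy locus $D_4(\varphi_\varepsilon)\subset G$. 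Concretely, the variation $\dot\varphi\in \Hom(\bw2\cQ^\vee,W\otimes\cO_G)$ restricted to $S^{[2]}$ and composed with the projection onto $\mathcal{H}om(\ker\varphi,\operatorname{coker}\varphi)$ gives a section of the normal bundle. Indeed, at a rank-$4$ point the normal space of the determinantal locus is $\Hom(\ker,\operatorname{coker})$, and since $S^{[2]}$ has the expected codimension $4=2\cdot2$, we get $\cN_{S^{[2]}/G}\cong \mathcal{H}om(\cK,\cC)$ with $\cK,\cC$ of rank $2$. This defines a map $\alpha\colon H^0(L)\otimes W\to H^0(\cN_{S^{[2]}/G})$.

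Next we will show that $\alpha$ is injective. Its kernel consists of deformations of the linear section $\bP^8\subset\bP^{14}$ that fix $S^{[2]}$ to first order. Since $S^{[2]}$ determines $S$ and $E$, hence the subspace $H^0(L)^\vee\subset\bw2H^0(E)^\vee$, via Mukai's reconstruction, such a deformation must be trivial. Equivalently, we will check on cohomology that $H^0(\bw2\cQ\otimes W|_{S^{[2]}})$ maps injectively. Here $\bw2\cQ|_{S^{[2]}}=\det E^{[2]}=L_2-\delta$, and $H^0(L_2-\delta)=\bw2H^0(L)$ or a similar known identification should make this transparent.

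Then we will bound $h^0(\cN_{S^{[2]}/G})\le 54=9\cdot 6$. We plan to use the exact sequence
\[
0\to \cT_{S^{[2]}}\to \cT_G|_{S^{[2]}}\to \cN_{S^{[2]}/G}\to 0.
\]
We have $H^0(\cT_{S^{[2]}})=0$ and $h^1(\cT_{S^{[2]}})=21$. Moreover $\cT_G|_{S^{[2]}}=\cU^\vee\otimes\cQ|_{S^{[2]}}$, where $\cU|_{S^{[2]}}$ is the rank-$2$ subbundle with fibers $H^0(E\otimes\cI_\xi)$. The Euler-type sequence $0\to\cU\otimes\cU^\vee\to V_6\otimes\cU^\vee\to \cT_G\to0$ then reduces everything to cohomology of the tautological bundles $E^{[2]}$, $\cU^\vee$, and $\mathcal{E}nd$-type bundles on $S^{[2]}$. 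These can be computed via the standard $\mathbb{Z}/2$-equivariant correspondence with $S\times S$, using the rigidity and stability of $E$ (so $\chi(E,E)=2$ and $\Hom(E,E)=\bC$). We expect $h^0(\cT_G|_{S^{[2]}})=35=\dim\mathfrak{sl}(V_6)$ (with $H^1=0$) and $h^1(\cT_G|_{S^{[2]}})$ small enough. The target count $54=35+21-2$ suggests we need $H^0(\cN)\to H^1(\cT_{S^{[2]}})$ to have a $19$-dimensional image, i.e. the image is the deformations keeping $L_2-2\delta$ algebraic. Alternatively, we may compute $\chi(\cN_{S^{[2]}/G})$ by Riemann--Roch and show $h^i(\cN)=0$ for $i>0$ using the Gulliksen--Negård resolution of \autoref{prop:eqgenus8}.

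The main obstacle will be this upper bound: the cohomology of $\cU^\vee\otimes\cQ$ and of $\mathcal{H}om(\cK,\cC)$ on $S^{[2]}$, especially the vanishing of the relevant $H^1$'s. We would first try to identify $\cK$ and $\cC$ in terms of $E^{[2]}$ and $\cO(\delta)$, and reduce to computations on $S$ and $S\times S$ of the form $H^*(E\otimes E^\vee)$, $H^*(E)\otimes H^*(E)$. Combined with injectivity of $\alpha$, the bound then gives the canonical isomorphism.
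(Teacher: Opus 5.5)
Your construction of $\alpha$ ($\cK\subset H^0(L)^\vee\otimes\cO\xrightarrow{u}W\otimes\cO\to\cC$) is the same map the paper uses. Both remaining steps have gaps, and the first is decisive: the upper bound $h^0(\cN_{S^{[2]}/G})\le 54$. Knowing $h^0(\cT_G|_{S^{[2]}})=35$ (the paper computes this too, from $\cQ|_{S^{[2]}}=E^{[2]}$ and Krug's formulas) only gives $h^0(\cN_{S^{[2]}/G})=35+\rk(\mathrm{KS})$. A priori the Kodaira--Spencer image is $19$- or $20$-dimensional: it lies in the $20$-dimensional space of deformations keeping $\cO_G(1)|_{S^{[2]}}=L_2-2\delta$ algebraic, and it contains the $19$ directions coming from deforming $S$. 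You describe the expected image as that whole $20$-dimensional space, which would give $55$, not $54$. What must be shown is exactly that the image is a proper subspace, i.e.\ that a general first-order deformation of $(S^{[2]},L_2-2\delta)$ does not stay in a Plücker Grassmannian. This is a statement about the rank of a specific map, so it does not follow from the dimension counts you list. The paper supplies it geometrically: it suffices that a general $(X,H)\in\cM_6^{(1)}$ lies in no Plücker-embedded $\Gr(2,6)$. This is checked on the degeneration $(S'^{[2]},L'_2)$, with $S'$ a general genus-$4$ K3 surface. Its image $\Sym^2S'\subset\bP^{14}$ is projectively normal and lies on exactly $15$ quadrics, and these do not cut out a Grassmannian (their Jacobian has rank $8$ at an explicit point). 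Your fallback, computing $\chi(\cN_{S^{[2]}/G})$ and proving higher cohomology vanishes, cannot work. Restricting $0\to\cQ^\vee\otimes\cQ\to V_6^\vee\otimes\cQ\to\cT_G\to0$ to $S^{[2]}$ and using $H^3(E^{[2]})=0$ gives $h^3(\cT_G|_{S^{[2]}})\le\operatorname{ext}^4(E^{[2]},E^{[2]})=1$. Since $h^3(\cT_{S^{[2]}})=h^{1,3}=21$, it follows that $h^2(\cN_{S^{[2]}/G})\ge 20$.

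Injectivity of $\alpha$ is not established either. That $S^{[2]}\subset G$ determines $H^0(L)^\vee\subset\bw2V_6^\vee$ is a statement about closed points. It does not imply that a first-order deformation of the subspace fixing $S^{[2]}$ is trivial, since injective morphisms can ramify. Even the version at a general point would need the set-theoretic reconstruction actually proved, followed by generic smoothness. Your cohomological reformulation rests on two false identities: $\bw2\cQ|_{S^{[2]}}=\bw2E^{[2]}$ has rank $6$, and $\det E^{[2]}=\cO_G(1)|_{S^{[2]}}=L_2-2\delta$, not $L_2-\delta$. The paper argues instead as follows. Since $\cK^\vee\cong L^{[2]}=\pi_*\psi^*L$, one has $H^0(\cN_{S^{[2]}/G})=H^0(S,L\otimes\psi_*\pi^*\cC)$, so it suffices that $W\otimes\cO_S\to\psi_*\pi^*\cC$ is injective. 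By Beauville--Donagi, $\cC$ is the dual tautological bundle on $F(Y)$ for the Pfaffian cubic fourfold $Y\subset\bP(W^\vee)$. At a general $p\in S$ the injectivity then reduces to non-degeneracy in $\bP^5$ of the surface swept out by the lines parametrized by $\tS_p$. This holds because that surface contains a quintic scroll, which is a $(3,1)$-divisor on a Segre $\bP^1\times\bP^2$.
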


\begin{proof}
The proof is divided into two steps. First we will construct a canonical inclusion 
\[
H^0(L)\otimes W\into H^0(S^{[2]},\cN_{S^{[2]}/G}),
\]
and then show that $h^0(\cN_{S^{[2]}/G})=\dim(H^0(L)\otimes W)=54$.

\vspace{2mm}

\textbf{Step 1.}
Using the degeneracy locus description and writing $\cE_1\coloneqq\ker(\varphi|_{S^{[2]}})$ and $\cE_2\coloneqq\mathrm{coker}(\varphi|_{S^{[2]}})$,
we have the following description for the normal bundle
\[
\cN_{S^{[2]}/G}\cong \cE_1^\vee\otimes \cE_2.
\]
As explained in \cite[Section 5.2]{projective-models}, the kernel bundle
\[
\cE_1=\ker(\varphi|_{S^{[2]}})\into \bw2 \cQ^\vee|_{S^{[2]}}\into \bw2 V_6^\vee\otimes \cO_{S^{[2]}}
\]
is the universal secant line to $S\subset \bP(H^0(L)^\vee)\subset \bP(\bw2 V_6^\vee)$. In other words, $\cE_1^\vee$ is the tautological rank-$2$ bundle $L^{[2]}=\pi_*\psi^* L$ on $S^{[2]}$ induced by $L$,
where $\pi$ and $\psi$ are the maps given in the diagram
\[
        \begin{tikzcd}
            &B\ar[ld, "\pi"']\ar[rd, "\psi"]\coloneqq\setmid{(p,\xi)\in S\times S^{[2]}}{p\in \xi}\\
            S^{[2]}&& S
        \end{tikzcd}   
\]
Therefore, to compute the global sections of $\cN_{S^{[2]}/G}$ we have
\[
H^0(S^{[2]}, \cN_{S^{[2]}/G})=H^0(S^{[2]}, \pi_*\psi^*L\otimes \cE_2)=H^0(S, L\otimes \psi_*\pi^*\cE_2).
\]
We claim that the trivial vector bundle $W\otimes \cO_S$ is canonically contained in $\psi_*\pi^*\cE_2$. Verification of this claim is enough to conclude Step 1,  since we obtain the desired canonical inclusion
\[
H^0(L)\otimes W = H^0(S,L\otimes W\otimes \cO_S)  \into H^0(S, L\otimes \psi_*\pi^*\cE_2)=H^0(S^{[2]}, \cN_{S^{[2]}/G}).
\]

To verify the claim, consider the Pfaffian cubic fourfold $Y\subset \bP(W^\vee)=\bP^5$ obtained as a $\bP^5$-linear section of the Pfaffian cubic hypersurface $\Pf\subset \bP(\bw2 V_6)=\bP^{14}$. As explained in \cite[Remark 5.3]{projective-models}, under the isomorphism 
\[
S^{[2]}\cong F(Y)\subset \Gr(2,W^\vee)
\]
proven by Beauville--Donagi \cite{Beauville-Donagi}, $\cE_2$ is (the pullback of) the dual of the universal rank-$2$ vector bundle $\cU_W$ on $\Gr(2,W^\vee)$.
Let $I\coloneqq\bP_{F(Y)}(\cU_W)$ be the natural incidence between $Y$ and $F(Y)$ (that is, the universal line) with projections $\psi'$ and $\pi'$, which gives the diagram
\[
\begin{tikzcd}
& I\ar[ddl,"\psi'"']\ar[ddrr,"\pi'"] &&&& B\ar[ddll,"\pi"']\ar[ddr,"\psi"]\\
&&& \\
Y &  && F(Y)\cong S^{[2]} &&& S
\end{tikzcd}
\]
Denoting by $\zeta$ the relative $\cO(1)$ on the projective bundle $I$, we have $(\psi')^*\cO_Y(1)=\zeta$ and $\pi'_*\zeta=\cU_W^\vee=\cE_2$. Therefore, the natural map
\[
W\otimes \cO_Y=H^0(\cO_Y(1))\otimes \cO_Y\to \cO_Y(1)
\]
induces $W\otimes \cO_{S^{[2]}}\to \cE_2$, which in turn induces a canonical map
\[
W\otimes \cO_{S}\to \psi_*\pi^*\cE_2.
\]

The latter map is the one whose injectivity is claimed. By cohomology and base change, it suffices to check that for a general point $p\in S$ the restricted map
\[
W\to H^0(\cE_2|_{\tS_p})
\]
is an inclusion, where $\tS_p\coloneqq\psi^{-1}(p) \cong \Bl_p(S)$ is the subvariety of $S^{[2]}$ parametrizing subschemes with support containing $p$. Such inclusion is readily seen to be equivalent to the non-degeneracy in $\bP(W^\vee)=\bP^5$ of the subvariety $\Sigma_p\subset Y$ swept out by the surface of lines $\tS_p\subset S^{[2]}\cong F(Y)$.

Since $\tS_p$ contains the smooth rational curve $\Delta_p\coloneqq \bP T_p S$, it suffices to show the non-degeneracy of the corresponding quintic scroll.
By \cite[Section~4.1.3]{Hassett}, a general such quintic scroll is a divisor of type $(3,1)$ on a Segre-embedded $\bP^1\times \bP^2$ in $\bP^5$, so it cannot be degenerate.

\vspace{2mm}

\textbf{Step 2.} Now we will prove that $h^0(\cN_{S^{[2]}/G})=54$. We begin by showing the equality $h^0(S^{[2]},\cT_G|_{S^{[2]}})=35$. To this end, observe that $\cT_G=\cU^\vee\otimes \cQ$, where $\cU$ and $\cQ$ are the universal and quotient bundles fitting in the short exact sequence
\[
0\to \cU\to V_6\otimes \cO_G\to \cQ\to 0.
\]
Recall also that the restriction $\cQ|_{S^{[2]}}$ is nothing but the tautological bundle $E^{[2]}=\pi_*\psi^*E$ defined by $E$ on $S^{[2]}$, where $E$ is the stable bundle on $S$ with Mukai vector $v(E)=(2,L,4)$.
The equality $h^0(\cT_G|_{S^{[2]}})=35$ is then obtained by combining the short exact sequence
\[
0\to \cQ^\vee|_{S^{[2]}}\otimes \cQ|_{S^{[2]}}\to H^0(E)^\vee\otimes \cQ|_{S^{[2]}}\to \cT_G|_{S^{[2]}}\to 0
\]
with the following cohomological calculations (for which we use \cite[Remark~3.19]{Krug:taut}):

\begin{itemize}
    \item $h^0(\cQ^\vee|_{S^{[2]}}\otimes \cQ|_{S^{[2]}})=\hom(E^{[2]},E^{[2]})=1$
    \item $h^0(H^0(E)^\vee\otimes \cQ|_{S^{[2]}})=6h^0(E^{[2]})=6h^0(E)=36$
    \item $h^1(\cQ^\vee|_{S^{[2]}}\otimes \cQ|_{S^{[2]}})=\mathrm{ext}^1(E^{[2]},E^{[2]})=0$
\end{itemize}

Now, in view of the equality $h^0(S^{[2]},\cT_G|_{S^{[2]}})=35$, the assertion $h^0(\cN_{S^{[2]}/G})=54$ is equivalent to the Kodaira--Spencer map
\[
H^0(S^{[2]},\cN_{S^{[2]}/G})\to H^1(S^{[2]},\cT_{S^{[2]}})
\]
being of rank $19$. In other words, it suffices to show that a general first-order deformation of $S^{[2]}$ in $\bP^{14}$ is not contained in $G$ up to a first-order linear transformation, or equivalently, a general member $(X,H)\in \cM_6^{(1)}$ is not contained in any Plücker embedded $G$.

To check this, it suffices to find a degeneration of $(X,H)\in\cM_6^{(1)}$ in $\bP^{14}$ which does not lie in $G$.
To this end, we consider the pair $(S'^{[2]},L'_2)$ for a general polarized K3 surface $(S',L')$ of genus $4$.
Note that the divisor $L'_2$ is big and basepoint-free (but not ample).
As will be shown in \autoref{lem:D6-square6}, the corresponding morphism
\[
S'^{[2]}\to \bP H^0(S'^{[2]},L'_2)^\vee
\]
is the Hilbert--Chow contraction $S'^{[2]}\to \Sym^2S'$ followed by a closed immersion, and the image $\Sym^2S'$ is indeed not contained in any Plücker embedded Grassmannian $\Gr(2,6)$.
Thus the result follows.
\end{proof}

It follows from the proposition that  $H^0(\cN_{S^{[2]}/Q})$ is canonically identified with the Zariski tangent space to the Grassmannian $\Gr(9, \bw2 H^0(E)^\vee)$ at the point $[H^0(L)^\vee]$. That is, a first-order deformation of $S^{[2]}$ in $G$ is the same as a first-order deformation of the chosen $\bP^8$ (determining the K3 surface $S$) in the ambient projective space of the Grassmannian $\Gr(2,V_6^\vee)$.
Moreover, as explained in the proof, we obtain the following conclusion.

\begin{corollary}\label{prop:deformation-genus8}
A general member $(X,H)\in\cM_6^{(1)}$ in $\bP^{14}$ is not contained in any Plücker embedded Grassmannian $G=\Gr(2,V_6)$.
\end{corollary}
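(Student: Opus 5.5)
The plan is to argue by semicontinuity, using the degeneration already exhibited in Step~2 of the proof of \autoref{54again}. Suppose, for contradiction, that a general member $(X,H)\in\cM_6^{(1)}$ embedded in $\bP^{14}$ is contained in some Plücker embedded Grassmannian. The locus of such Grassmannians is the orbit of $G$ under $\mathrm{PGL}(15)$; its closure is projective. It follows that the condition ``the image is contained in a (possibly degenerate) limit of Plücker embedded $\Gr(2,6)$'s'' is a closed condition in families.

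A simpler closed condition is more convenient: ``$h^0(\cI_{X/\bP^{14}}(2))\ge 15$ and the quadrics through $X$ cut out a subscheme whose reduction contains an $8$-dimensional component.'' I would instead argue directly as follows. By \autoref{prop:eqgenus8}, for $S^{[2]}$ the quadrics containing it are exactly the $15$ Plücker quadrics of $G$. So being contained in a Grassmannian translate $g\cdot G$ is equivalent to $H^0(\cI_X(2))\supseteq g\cdot H^0(\cI_G(2))$. The incidence
\[
\setmid{(X,[g])}{g\cdot H^0(\cI_G(2))\subset H^0(\cI_X(2))}
\]
over the Hilbert scheme component, with $[g]$ ranging over the closure of the orbit of the $15$-dimensional space $H^0(\cI_G(2))$ in $\Gr(15,\Sym^2)$, is proper over the base. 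Hence if the general member admitted such a $g$, every member of a flat family, including its limits, would lie in a limit $15$-dimensional space of quadrics $\Lambda\subset H^0(\cI_X(2))$ lying in the orbit closure.

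Next, I take a flat family in $\bP^{14}$ degenerating $(X,H)$ to the image $\Sym^2 S'$ of $(S'^{[2]},L'_2)$ for a general genus-$4$ K3 surface $(S',L')$. The image is embedded by the complete linear system via Hilbert--Chow, as asserted through \autoref{lem:D6-square6}. Flatness of the image family is plausible since $h^0$ of $L_2$ and of $L'_2$ agree, and higher cohomology vanishes. It then suffices to show that $\Sym^2S'$ lies in no $\Lambda$ of the orbit closure. For genuine translates this reduces to showing that $\Sym^2 S'$ is not contained in any $\Gr(2,6)$, which is the claim in the cited lemma. I expect the main obstacle to be the boundary of the orbit closure, namely degenerate spaces of quadrics. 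I would handle it by checking $h^0(\cI_{\Sym^2S'}(2))<15$ directly: the quadrics through $\Sym^2S'$ correspond to $\ker\big(\Sym^2 H^0(S'^{[2]},L'_2)\to H^0(2L'_2)\big)$, which can be computed via $\Sym^2 S'$ and the quadrics containing $S'\subset\bP^4$. If the count is below $15$, no $15$-dimensional $\Lambda$ fits at all, and the degeneration argument closes.

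Finally, the first-order statement used in \autoref{54again} follows from the same reasoning. If every first-order deformation of $S^{[2]}$ in $\bP^{14}$ stayed in a translate of $G$, then $h^0(\cN_{S^{[2]}/G})+\dim \mathfrak{pgl}_{15}$ modulo stabilizer would dominate the full $20$-dimensional moduli. That would force the general member to lie in a translate of $G$, which the degeneration excludes.
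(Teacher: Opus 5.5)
Your route is the paper's: closedness of the condition in families, degeneration to $\Sym^2S'$ for a general genus-$4$ K3 surface $(S',L')$, and \autoref{lem:D6-square6} to exclude genuine translates of $G$. You are also right that closedness forces you to exclude degenerate points of the orbit closure $\overline{O}\subset\Gr(15,120)$ of the space of Plücker quadrics. The paper handles this only implicitly.

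\textbf{The gap.} Your proposed handling of those degenerate points fails. You want $h^0(\cI_{\Sym^2S'}(2))<15$. But \autoref{lem:D6-square6} shows $\Sym^2S'$ lies on exactly $15$ quadrics, and this lower bound holds for every member of the family: $h^0(\cO_{\bP^{14}}(2))=120$, while $h^0(2H)=h^0(2L'_2)=\binom{15}{2}=105$ by Riemann--Roch. So the test can never succeed. Your own closedness step already produces a $15$-dimensional limit space inside $H^0(\cI_{\Sym^2S'}(2))$. As written, nothing excludes the boundary of $\overline O$, which is exactly the step you flagged as the main obstacle.

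\textbf{The fix.} Use the criterion you wrote down and then abandoned: the base locus. Since the limit space must be all of $\Lambda_0\coloneqq H^0(\cI_{\Sym^2S'}(2))$, you need $\Lambda_0\notin\overline O$.
\begin{itemize}
\item The incidence $\setmid{(\Lambda,x)}{x\in\mathrm{Bs}(\Lambda)}\to\overline O$ is proper, with $8$-dimensional fibres over the dense orbit. By upper semicontinuity of fibre dimension, every $\Lambda\in\overline O$ has base locus of dimension at least $8$.
\item Concretely, that base locus contains the flat limit of the translates $g_tG$. This limit is $8$-dimensional at each of its points and contains $\Sym^2S'$.
\item On the other hand, $\Lambda_0$ depends only on the quadric $Q\supset S'$ and cuts out $\Sym^2Q$, which is $6$-dimensional. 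This is the footnote in \autoref{lem:D6-square6}.
\item Alternatively, combine the rank-$8$ Jacobian computation at $p$ with the $\SO(5)$-invariance of $\Lambda_0$. This shows $\mathrm{Bs}(\Lambda_0)$ is at most $6$-dimensional near a general point of $\Sym^2S'$, which lies in the same open $\SO(5)$-orbit of $\Sym^2Q$ as $p$. That contradicts the first point.
\end{itemize}
This treats genuine translates and degenerate limits at once.

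Your last paragraph is not needed for this corollary. Inferring an actual family from a tangent-space count would in any case require an unobstructedness statement you do not provide.
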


The following lemma completes the remaining step in the proof of \autoref{54again}.
We briefly digress from the case of K3 surfaces of genus $8$ and consider instead those of genus $4$.

\begin{lemma}\label{lem:D6-square6}
Consider $(S^{[2]},L_2)$ for $(S,L)$ a general polarized K3 surface of genus $4$, namely, where $S\into \bP^4=\bP H^0(L)^\vee$ is a $(2,3)$-complete intersection.
The morphism
\[
\varphi_{L_2}\colon S^{[2]}\to \bP H^0(S^{[2]},L_2)^\vee
\]
is the Hilbert--Chow contraction $S^{[2]}\to \Sym^2S$ followed by a closed immersion, and the image $\Sym^2S$ is not contained in any Plücker embedded Grassmannian $\Gr(2,6)$.
\end{lemma}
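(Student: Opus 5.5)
The plan is to identify everything with the symmetric square $\Sym^2\bP^4$ sitting inside $\bP(\Sym^2 V_5)=\bP^{14}$, where $V_5\coloneqq H^0(L)^\vee$. Then I would show the image cannot lie in a Grassmannian by counting quadrics. The main obstacle is the exact count of quadrics containing $\Sym^2S$, which underlies the last step.

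\textbf{Step 1: the morphism.} Since $L_2$ is pulled back from $\Sym^2S$ along the Hilbert--Chow morphism $h$, and $h_*\cO_{S^{[2]}}=\cO_{\Sym^2S}$, we get
\[
H^0(S^{[2]},L_2)=H^0(S\times S,L\boxtimes L)^{\mathfrak S_2}=\Sym^2H^0(L),
\]
which is $15$-dimensional. So $\varphi_{L_2}$ factors as $h$ followed by the restriction to $\Sym^2S$ of the natural map
\[
\sigma\colon \Sym^2\bP(V_5)\to\bP(\Sym^2V_5),\qquad \{x,y\}\mapsto x\cdot y.
\]
The map $\sigma$ is injective, because a rank-$\le2$ symmetric tensor $x\cdot y$ determines the unordered pair $\{[x],[y]\}$. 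Its image is the secant variety $\sigma_2(v_2(\bP^4))$, i.e.\ the symmetric $5\times5$ matrices of rank $\le2$. This is a normal determinantal variety. A finite, bijective, birational morphism onto a normal variety is an isomorphism (Zariski's main theorem), so $\sigma$ is a closed immersion. Restricting to the closed subscheme $\Sym^2S$ then shows that $\varphi_{L_2}$ is $h$ followed by a closed immersion, whose image spans $\bP^{14}$.

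\textbf{Step 2: quadrics containing $\Sym^2S$.} Suppose $\Sym^2S\subset G$ for some Plücker embedded $G=\Gr(2,6)\subset\bP^{14}$. The $15$ Plücker quadrics would then lie in $H^0(\bP^{14},\cI_{\Sym^2S}(2))$. I would compute this space via the restriction map
\[
\Sym^2(\Sym^2H^0(L))\to H^0(\Sym^2S,\cO(2))=\Sym^2H^0(S,2L),
\]
which goes from a $120$-dimensional space to a $105$-dimensional one, since $h^0(2L)=14$. The target is identified with $\Sym^2H^0(2L)$ in the same way as in Step 1. Because $S$ is projectively normal, $\Sym^2H^0(\cO_{\bP^4}(1))\to H^0(2L)$ is surjective with one-dimensional kernel $\bC q$, where $Q=Z(q)$ is the quadric threefold containing $S$. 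From this I expect the restriction map to be surjective, with kernel exactly the $15$-dimensional space of quadrics vanishing on $\Sym^2Q$. Concretely, these are the quadrics $t\mapsto (q\otimes\ell)$-type contractions of $t\otimes t$, obtained by polarizing the condition $q(x)=0$ on $x\cdot y$. Checking surjectivity and pinning down the kernel exactly is the delicate computation; I would try to do it $\SO(q)$-equivariantly, decomposing $\Sym^2(\Sym^2V_5^\vee)$ into irreducibles. So $h^0(\cI_{\Sym^2S}(2))=15$.

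\textbf{Step 3: contradiction.} By Step 2, both the Plücker quadrics of $G$ and the quadrics vanishing on $\Sym^2Q$ form $15$-dimensional subspaces of the $15$-dimensional space $H^0(\cI_{\Sym^2S}(2))$, so they coincide. Their common base locus would then be $G$, which is irreducible of dimension $8$. On the other hand, this base locus is contained in the locus cut out by quadrics vanishing on $\Sym^2Q$. I would show the latter is $\Sym^2Q$ (or at least has dimension $\le 6$), using that it lies in $\sigma_2(v_2(\bP^4))$ after intersecting. This gives a dimension contradiction, so $\Sym^2S$ is not contained in any Plücker embedded $\Gr(2,6)$.

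As a fallback that avoids identifying the base locus, I would compare Hilbert functions in degree $3$. The Plücker cubics of $G$ are generated by its quadrics, whereas the cubic equations of $\Sym^2S$ include the $3\times3$ minors. Showing these spaces have incompatible dimensions would also prevent the two $15$-dimensional spaces of quadrics from coinciding.
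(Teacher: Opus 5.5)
Your Steps 1 and 2 follow the paper's route: factor $\varphi_{L_2}$ through $\Sym^2S\subset\Sym^2\bP^4\subset\bP(\Sym^2V_5)$, and identify the $15$ quadrics containing $\Sym^2S$ with those containing $\Sym^2Q$. Step 1 is fine, and your Zariski main theorem argument for the closed immersion is more careful than the paper's. The count in Step 2 is easier than you fear. The restriction factors as $\Sym^2(\Sym^2H^0(L))\to\Sym^2(\Sym^2H^0(L))\xrightarrow{\Sym^2 r_2}\Sym^2H^0(2L)$. The first map is onto, hence bijective since both sides are $120$-dimensional, and $\ker(\Sym^2 r_2)=q\cdot\Sym^2H^0(L)$.

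The genuine gap is in Step 3, which is where non-containment is actually decided. You must show that the base locus $B$ of these $15$ quadrics has dimension $<8$, and you give no argument for it. Knowing $B\cap\sigma_2(v_2(\bP^4))=\Sym^2Q$ does not bound $\dim B$ below $8$ unless you already know $B\subset\sigma_2(v_2(\bP^4))$. That inclusion is true, but it is exactly the nontrivial point: the rank-${\le}2$ locus lies on no quadric, so nothing formal forces a base locus of quadrics into it. Intersecting does not help either: an $8$-dimensional $G$ meeting the $8$-dimensional secant variety along the $6$-dimensional $\Sym^2Q$ is not a contradiction in $\bP^{14}$. Your fallback also fails as stated. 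Since $h^0(\cI_G(3))=190\le 245=h^0(\cI_{\Sym^2S}(3))$, there is no dimensional incompatibility. You would instead have to show that the $15$ quadrics through $\Sym^2Q$ do not have exactly $35$ linear syzygies, as the Plücker quadrics do.

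The paper closes the gap with a local computation. At $p=\{x,y\}\in\Sym^2Q$ with $x=(1:i:0:0:0)$ and $y=(1:-i:0:0:0)$, the Jacobian matrix of the $15$ quadrics has rank $8$. At every point of a smooth $8$-dimensional $G$ cut out by its Plücker quadrics, that rank would be $6$.

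Your global route can also be completed. Take $q=\sum x_i^2$ and let $A$ be the symmetric-matrix coordinate on $\bP^{14}$, so that $\{x,y\}\mapsto xy^T+yx^T$. Set $M(A)\coloneqq A^2-\tfrac12\operatorname{tr}(A)A$. One checks $\operatorname{tr}\bigl(C\,M(xy^T+yx^T)\bigr)=q(x)q'(y)+q'(x)q(y)$ for $q'(x)=x^TCx$, so the $15$ quadrics are precisely the entries of $M(A)$. If $M(A)=0$, then either $\operatorname{tr}A\neq 0$ and $\tfrac{2}{\operatorname{tr}A}A$ is an idempotent of trace $2$, or $A^2=0$; in both cases $\rk A\le 2$. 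Writing $A=xy^T+yx^T$ then gives $M(A)=q(y)\,xx^T+q(x)\,yy^T$, which vanishes if and only if $x,y\in Q$. Hence $B=\Sym^2Q$ has dimension $6$ and cannot be $G$.
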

\begin{proof}
We will denote by $Q=Z(q)$ the quadric containing $S$, which we assume to be the standard one $q=x_0^2+\dots+x_4^2$.
The corresponding morphism $S^{[2]}\to \bP(H^0(L_2)^\vee)=\bP(\Sym^2 H^0(L)^\vee)$ is the composition
\[
S^{[2]}\overset{\mathrm{HC}}{\to}\Sym^2 S\into \Sym^2 Q\into \Sym^2 \bP^4\into \bP(\Sym^2 H^0(L)^\vee)=\bP^{14}.
\]
Factoring the restriction $H^0(\bP^{14}, \cO(d))\to H^0(\Sym^2S, \cO(d))$ through $\Sym^2 \bP^4$ we obtain
\[
m_d\colon\Sym^d\Sym^2 H^0(L)\overset{m'_d}{\to} \Sym^2\Sym^d H^0(L) \xrightarrow{\Sym^2 r_d} \Sym^2 H^0(L^{\otimes d}),
\]
where the first arrow is the surjective map given by
\[
(x_{i_1}\otimes x_{j_1})\otimes\dots\otimes(x_{i_d}\otimes x_{j_d})\mapsto (x_{i_1}\otimes\dots\otimes x_{i_d})\otimes (x_{j_1}\otimes\dots\otimes x_{j_d}),
\]
and the second arrow is the symmetric square of the restriction $r_d\colon \Sym^d H^0(L) \to H^0(L^{\otimes d})$ on $S\subset\bP^4$.
In particular, by the projective normality of $S$ in $\bP^4$, $\Sym^2 S$ is projectively normal in $\bP^{14}$, and is contained in $15$ quadrics by a dimension count.

On the other hand, we can explicitly exhibit $15$ quadrics in $\bP^{14}$ containing $\Sym^2 Q$ and in turn $\Sym^2 S$.
Namely, the map $m_2'$ is an isomorphism, while the kernel of $\Sym^2 r_2$ is spanned by $q\otimes q' + q'\otimes q$ for $q'\in H^0(\bP^4,\cO(2))$.
The $15$ quadrics are thus obtained as their preimage under $m_2'$ and can be explicitly computed.
In particular, they depend only on $Q$ and not on the K3 surface $S$.

Finally, if $\Sym^2S$ were to be contained in some Plücker embedded $G$, then the $15$ quadrics we obtained would be precisely the $15$ Plücker quadrics up to some linear transformation.
Hence to disprove this, it suffices to show that the intersection of the $15$ quadrics is not $G$.%
\footnote{The intersection of the $15$ quadrics in fact coincides with $\Sym^2 Q$. This fact can be easily checked via {\sc Macaulay2}.}
For this, we pick an explicit point $p\coloneqq \set{x,y}$ on $\Sym^2 Q$ given by $x\coloneqq(1:i:0:0:0)$ and $y\coloneqq(1:-i:0:0:0)$, and compute the Jacobian matrix of the $15$ quadrics at this point: we obtain a matrix of rank $8$, so the intersection of the quadrics has dimension $\le 6$ at $p$ and cannot be $G$.
This concludes the proof.
\end{proof}

Now we go back to study $(S^{[2]}, L_2-2\delta)$ for $(S,L)$ a general K3 surface of genus $8$ and perform some further cohomology computation.

\begin{lemma}
\label{lem:190}
Consider $(S^{[2]},L_2-2\delta)$ for $(S,L)$ a general polarized K3 surface of genus $8$, and let $G\coloneqq\Gr(2,6)\subset \bP^{14}=\bP(H^0(S^{[2]},L_2-2\delta)^\vee)$.
We have $h^i(G,\cN_{G/\bP}\otimes \cI_{S^{[2]}/G})=0$ for all $i$, except for $h^1=1$,
and $h^0(S^{[2]},\cN_{G/\bP})=190$ while all higher cohomologies vanish.
\end{lemma}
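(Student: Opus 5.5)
The plan is to compute everything on $G$ first, and then pass to $S^{[2]}$ through the restriction sequence
\[
0\to \cN_{G/\bP}\otimes\cI_{S^{[2]}/G}\to \cN_{G/\bP}\to \cN_{G/\bP}|_{S^{[2]}}\to 0 .
\]

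\textbf{Step 1: cohomology of $\cN_{G/\bP}$ on $G$.} For the Plücker embedding of $G=\Gr(2,V_6)$ one has the standard description $\cN_{G/\bP}\cong \bw2\cQ\otimes\cO_G(1)$, an irreducible homogeneous bundle of rank $6$. By Borel--Weil--Bott, all its higher cohomology vanishes. Comparing the Euler sequence restricted to $G$ with $0\to\cT_G\to\cT_{\bP}|_G\to\cN_{G/\bP}\to0$, and using $h^0(\cT_{\bP}|_G)=224$ (by projective normality of $G$) and $h^0(\cT_G)=35$, gives $h^0(G,\cN_{G/\bP})=189$.

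\textbf{Step 2: cohomology of $\cN_{G/\bP}\otimes\cI_{S^{[2]}/G}$.} I would twist the Gulliksen--Negård resolution of $\cI_{S^{[2]}/G}$ by $\bw2\cQ(1)$. This resolution is attached to $\varphi\colon\bw2\cQ^\vee\to W\otimes\cO_G$ with $D_4(\varphi)=S^{[2]}$, and its terms are built from $\Sym^2$, $\mathfrak{sl}$-type and $\det$-pieces of $\bw2\cQ^\vee$ and $W$. Each twisted term decomposes into irreducible homogeneous bundles on $G$, namely Schur functors of $\cQ$ times powers of $\cO_G(1)$, and I would compute their cohomology one by one with Bott's theorem. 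The expectation is that almost every summand is acyclic. A single summand should contribute exactly one $\bC$; this is presumably the $\det$-type end of the complex, which after the twist and the identity $\cN_{G/\bP}\cong\cN_{G/\bP}^\vee(3)$ pairs with the trivial representation. That summand sits in the homological position that produces $h^1=1$ in the hypercohomology spectral sequence, and the spectral sequence should degenerate for degree reasons. This bookkeeping is the main obstacle: the resolution has four terms with several Schur components each, and both the weights and the homological shifts have to be tracked carefully.

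\textbf{Step 3: conclusion.} Once Step 2 gives $h^i(\cN_{G/\bP}\otimes\cI_{S^{[2]}/G})=0$ for $i\neq1$ and $h^1=1$, the long exact sequence of the displayed short exact sequence, combined with Step 1, gives $h^0(S^{[2]},\cN_{G/\bP}|_{S^{[2]}})=189+1=190$ and vanishing of all higher cohomology. As a sanity check I would also verify $\chi=190$ directly on $S^{[2]}$ by Riemann--Roch, using $\cQ|_{S^{[2]}}=E^{[2]}$.

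If the Bott computation in Step 2 becomes unwieldy, a fallback is to compute $H^*(S^{[2]},\bw2E^{[2]}\otimes(L_2-2\delta))$ directly with Krug's formulas for tautological bundles. One would then deduce the cohomology of $\cN_{G/\bP}\otimes\cI_{S^{[2]}/G}$ from the sequence, checking injectivity of $H^0(G,\cN_{G/\bP})\to H^0(S^{[2]},\cN_{G/\bP}|_{S^{[2]}})$ by means of Proposition~\ref{prop:eqgenus8}.
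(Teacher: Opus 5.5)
Your plan is the paper's argument: tensor the Gulliksen--Negård resolution of $\cI_{S^{[2]}/G}$ by $\cN_{G/\bP}=\bw2\cQ(1)=\Sigma^{1,1}\cQ^\vee(2)$, apply Borel--Weil--Bott to each summand, and then feed the result into the restriction sequence together with $h^0(G,\cN_{G/\bP})=189$ and the vanishing of its higher cohomology. Your forecast of where the single class lives is wrong, though this does not affect the method. The $\det$ end gives $\cN_{G/\bP}(-6)=\Sigma^{5,5,4,4}\cQ^\vee$, which is acyclic. The class is in fact $H^2(G,\Sigma^{3,1,1,1}\cQ^\vee)=\bC$, coming from the middle ($\mathfrak{sl}$-type) term $\Lambda M=\mathfrak{sl}(\bw2\cQ^\vee)(-3)\cong\Sigma^{2,2}\cQ^\vee(-2)\oplus\Sigma^{2,1,1}\cQ^\vee(-2)$; it sits in $E_1^{-1,2}$, and that is what produces $h^1=1$.
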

We note that the analogue of this lemma in square $4$ is computing $h^0(S^{[2]},\cO(2))=55$, which required no extra work.
\begin{proof}
Denote by $\Sigma^\lambda$ the Schur functor for a partition $\lambda$.
For $\cN_{G/\bP}\otimes\cI_{S^{[2]}/G}$, we consider the Gulliksen--Negård complex $F_\bullet$ (see \cite[Section~5.3]{projective-models})
\[
0\to \cO_G(-6)\to M(-3)^{\oplus 6}\to \Lambda M\oplus \cO_G(-3)^{\oplus 35} \to M(-2)^{\oplus 6} \to \cI_{S^{[2]}/G},
\]
where $M$ is the vector bundle $\cN_{G/\bP}(-2)=\Sigma^{1,1}\cQ^\vee$ and $\Lambda M$ is the rank-$35$ vector bundle $\Sigma^{2,1,1,1,1}M$ that fits in the short exact sequence
\[
0\to \Lambda M \to M\otimes \bw5 M\to \det M\to 0.
\]
One checks that $\Lambda M$ is the direct sum of two irreducible homogeneous vector bundles $\Sigma^{2,2}\cQ^\vee(-2)$ and $\Sigma^{2,1,1}\cQ^\vee(-2)$.
We then tensor the Gulliksen--Negård complex by $\cN_{G/\bP}$ and compute the associated spectral sequence
\[
E_1^{-k,i+k}\coloneqq H^{i+k}(G,\cN_{G/\bP}\otimes F_k)\Longrightarrow H^i(G,\cN_{G/\bP}\otimes\cI_{S^{[2]}/G}).
\]
Using Borel--Weil--Bott, we check that almost all terms are zero, with the only nonzero term arises when we compute
\[
\begin{gathered}
\cN_{G/\bP} \otimes \Lambda M = \Sigma^{1,1}\cQ^\vee\otimes (\Sigma^{2,2}\cQ^\vee\oplus\Sigma^{2,1,1}\cQ^\vee)\\
=\Sigma^{3,3}\cQ^\vee\oplus2\Sigma^{3,2,1}\cQ^\vee\oplus\Sigma^{3,1,1,1}\cQ^\vee\oplus\Sigma^{2,2,2}\cQ^\vee\oplus2\Sigma^{2,2,1,1}\cQ^\vee,
\end{gathered}
\]
where $h^2(\Sigma^{3,1,1,1}\cQ^\vee)=1$ so $\dim E_1^{-1,2}=1$.
We conclude that $h^i(\cN_{G/\bP}\otimes\cI_{S^{[2]}/G})=0$ for all $i$, except for $h^1=1$.

Then we consider the following short exact sequence
\[
0\to \cN_{G/\bP}\otimes\cI_{S^{[2]}/G}\to \cN_{G/\bP}\to \cN_{G/\bP}|_{S^{[2]}}\to 0.
\]
The numbers $h^i(G, \cN_{G/\bP})=0$ are readily known, with $h^0=189$ and $h^i=0$ for all $i>0$.
This gives us $h^0(S^{[2]},\cN_{G/\bP})=190$ while $h^i(S^{[2]},\cN_{G/\bP})=0$ for all $i>0$, as desired.
\end{proof}

Our next preparatory result is the analogue of \autoref{prep-lemma}:

\begin{lemma}\label{prep-lemma-cubic}
    Consider $(S^{[2]},L_2-2\delta)$ for $(S,L)$ a general polarized K3 surface of genus $8$, and let $G\coloneqq\Gr(2,6)\subset \bP^{14}=\bP(H^0(S^{[2]},L_2-2\delta)^\vee)$. Then:
    \begin{enumerate}
        \item\label{prep-lemma-cubic-1} $H^0(G,\cI_{S^{[2]}/G}^2(3))\cong\bC$, and is generated by the first degeneracy locus $D_5(\varphi)\in|\cO_G(3)|$.
        Furthermore, we have the vanishing $H^1(G,\cI_{S^{[2]}/G}^2(3))=0$. 
        \item\label{prep-lemma-cubic-2} $H^0(\bP^{14},\cI_{S^{[2]}/\bP}^2(3))\cong\bC$, and is generated by the Pfaffian cubic $\Pf\in |\cO_{\bP}(3)|$.
        Furthermore, we have the vanishing $H^1(\bP^{14},\cI_{S^{[2]}/\bP}^2(3))=0$. 
    \end{enumerate}
\end{lemma}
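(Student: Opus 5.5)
The plan follows the proof of \autoref{prep-lemma} closely, with $Q$ replaced by $G$, the quadric $q$ replaced by the Pfaffian cubic, and \autoref{prop:64genus7} replaced by \autoref{54again}.

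\textbf{Part (1).} Consider the exact sequence
\[
0\to \cI^2_{S^{[2]}/G}(3)\to \cI_{S^{[2]}/G}(3)\to \cN^\vee_{S^{[2]}/G}(3)\to 0 .
\]
The normal bundle carries a nowhere degenerate quadratic form (\autoref{prop:coble-quadratic-form}, applied to $D_5(\varphi)\subset G$), so $\cN^\vee_{S^{[2]}/G}(3)\cong\cN_{S^{[2]}/G}$. By \autoref{54again}, its $H^0$ is $H^0(L)\otimes W$, of dimension $54$. The Gulliksen--Negård resolution twisted by $\cO_G(3)$ should give, via Borel--Weil--Bott, $H^1(G,\cI_{S^{[2]}/G}(3))=0$ (this is \autoref{prop:eqgenus8}) and $h^0(G,\cI_{S^{[2]}/G}(3))=55$. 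The map $H^0(\cI_{S^{[2]}/G}(3))\to H^0(L)\otimes W$ is surjective: the cubic equations of $D_4(\varphi)$ are the $5\times 5$ minors, i.e.\ the $\bw5$-type components of the Gulliksen--Negård complex. Perturbing $\varphi$ by $\epsilon\cdot a$ with $a\in\Hom(H^0(L),W)$ realizes every first-order deformation, as in the genus $7$ argument of \cite[Remark~4.15]{projective-models}. Hence $h^0(\cI^2_{S^{[2]}/G}(3))=55-54=1$, generated by $\det\varphi$, which is singular along $D_4(\varphi)$. For $H^1(\cI^2_{S^{[2]}/G}(3))$ I use the same sequence: it vanishes once $H^1(\cN_{S^{[2]}/G})=0$, since the map on $H^0$ is surjective and $H^1(\cI_{S^{[2]}/G}(3))=0$. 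Now $\cN_{S^{[2]}/G}\cong L^{[2]}\otimes\cE_2$, and I would compute its $H^1$ via $H^1(S,L\otimes\psi_*\pi^*\cE_2)$ and the Leray spectral sequence, or alternatively from the tangent sequence, $h^1(\cT_G|_{S^{[2]}})$ (computed from the sequence in Step 2 of \autoref{54again}) and $h^2(\cT_{S^{[2]}})=0$.

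\textbf{Part (2).} Let $F$ be a cubic singular along $S^{[2]}$. Its partials lie in $H^0(\cI_{S^{[2]}/\bP}(2))$, which by \autoref{prop:eqgenus8} equals the space of Plücker quadrics. So $F$ is singular along $G$, and a cubic singular along $G$ is a multiple of $\pf$; this is the uniqueness in the Severi case of \autoref{example:fano}, and it follows, e.g., from $h^0(\cI^2_{G/\bP}(3))=1$ by $\SL_6$-representation theory. For the $H^1$ vanishing, the genus $7$ trick of multiplying by $q$ no longer works. Instead I use the exact sequence
\[
0\to \cI^2_{G/\bP}(3)\to\cI^2_{S^{[2]}/\bP}(3)\to \cK\to0 .
\]
Locally along $G$, $\cI_{S^{[2]}/\bP}$ is generated by $\cI_G$ and lifts of $\cI_{S^{[2]}/G}$. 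The quotient $\cK$ should fit in
\[
0\to \cN^\vee_{G/\bP}(3)\otimes\cI_{S^{[2]}/G}\to\cK\to\cI^2_{S^{[2]}/G}(3)\to0 .
\]
Here $\cN^\vee_{G/\bP}(3)\cong\cN_{G/\bP}$ by the Pfaffian quadratic form. \autoref{lem:190} gives $h^0=0$ and $h^1=1$ for the subsheaf, and part (1) gives $h^0=1$, $h^1=0$ for the quotient. The connecting map $H^0(\cI^2_{S^{[2]}/G}(3))\to H^1(\cN_{G/\bP}\otimes\cI_{S^{[2]}/G})$ should be an isomorphism: otherwise $D_5(\varphi)$ would lift to a cubic on $\bP$ singular along $S^{[2]}$ other than $\pf$, contradicting uniqueness. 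Thus $H^0(\cK)=H^1(\cK)=0$. Finally, $H^1(\cI^2_{G/\bP}(3))=0$ and $H^2(\cI^2_{G/\bP}(3))=0$ follow from the sequence $0\to\cI^2_G(3)\to\cI_G(3)\to\cN_{G/\bP}\to0$, the projective normality of $G$, and $h^0(\cN_{G/\bP})=189=h^0(\cI_G(3))-1$. This yields $H^1(\cI^2_{S^{[2]}/\bP}(3))=0$.

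\textbf{Main obstacle.} The main obstacle is the local identification of $\cK$ with that extension, i.e.\ checking that $\cI^2_{S^{[2]}/\bP}\cap\cI_G=\cI_G\cdot\cI_{S^{[2]}/\bP}+\cI^2_G$ near $S^{[2]}$. I would verify this in local coordinates where $G$ is a smooth subvariety and $S^{[2]}$ is a regularly embedded subvariety of $G$. A secondary difficulty is the $H^1(\cN_{S^{[2]}/G})$ vanishing in part (1).
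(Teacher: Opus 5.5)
Your proposal is correct and is essentially the paper's proof. Part (1) rests on the same surjection $H^0(\cI_{S^{[2]}/G}(3))\onto H^0(L)\otimes W\cong H^0(\cN^\vee_{S^{[2]}/G}(3))$, which the paper imports from the earlier work rather than re-deriving. Part (2) uses the same filtration $\cI^2_{G/\bP}\subset\cI_{G/\bP}\cI_{S^{[2]}/\bP}\subset\cI^2_{S^{[2]}/\bP}$ with the same key point: the boundary map $H^0(\cI^2_{S^{[2]}/G}(3))\to H^1(\cN_{G/\bP}\otimes\cI_{S^{[2]}/G})$ is an isomorphism by uniqueness of $\pf$. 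The only difference is bookkeeping: you group the top two graded pieces, whereas the paper's $\cK$ groups the bottom two.

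Two of your worries are unnecessary. First, $H^1(\cI^2_{S^{[2]}/G}(3))=0$ already follows from surjectivity on $H^0$ and $H^1(\cI_{S^{[2]}/G}(3))=0$, with no need for $H^1(\cN_{S^{[2]}/G})=0$. Second, your identity $\cI^2_{S^{[2]}/\bP}\cap\cI_{G/\bP}=\cI_{G/\bP}\cI_{S^{[2]}/\bP}$ is equivalent to injectivity of $\cN^\vee_{G/\bP}|_{S^{[2]}}\to\cN^\vee_{S^{[2]}/\bP}$, which holds because $S^{[2]}$ and $G$ are smooth.
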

\begin{proof}
\eqref{prep-lemma-cubic-1} follows from \cite[Remark 5.6]{projective-models} by observing that, under the isomorphisms $H^0(L)\otimes W\cong H^0(\cN_{S^{[2]}/G})\cong H^0(\cN_{S^{[2]}/G}^\vee(3))$ provided by \autoref{54again}, the surjection
\[
H^0(G,\cI_{S^{[2]}/G}(3))\onto H^0(L)\otimes W
\]
corresponds to the natural map $\cI_{S^{[2]}/G}(3)\onto \cN_{S^{[2]}/G}^\vee(3)$.

Now we prove \eqref{prep-lemma-2}.
For a cubic hypersurface $F\coloneqq Z(f)\in |\cI^2_{S^{[2]}/\bP}(3)|$ that is singular along $S^{[2]}$, its partial derivatives $\frac{\partial f}{\partial x_i}$ must all lie in $H^0(\cI_{S^{[2]}/\bP}(2))$, which coincides with the vector space of Plücker quadrics $H^0(\cI_{G/\bP}(2))$.
Therefore, $F$ is singular along $G$ as well and so it must be the Pfaffian cubic.
Then we examine the following commutative diagram with exact rows and columns
\[
\begin{tikzcd}
 &  0\ar[d] & 0\ar[d] & 0\ar[d] & \\
 0\ar[r] & \cK\ar[r]\ar[d] & \cI_{G/\bP}(3)\ar[r]\ar[d] & \cN_{G/\bP}^\vee(3)|_{S^{[2]}} \ar[r]\ar[d] & 0\\
0\ar[r] & \cI_{S^{[2]}/\bP}^2(3)\ar[r]\ar[d] & \cI_{S^{[2]}/\bP}(3)\ar[r]\ar[d] & \cN_{S^{[2]}/\bP}^\vee(3) \ar[r]\ar[d] & 0\\
0\ar[r] & \cI_{S^{[2]}/G}^2(3)\ar[r]\ar[d]& \cI_{S^{[2]}/G}(3)\ar[d]\ar[r] & \cN_{S^{[2]}/G}^\vee(3)\ar[d]\ar[r] & 0\\
 & 0 & 0 & 0 & 
\end{tikzcd}
\]
where $\cK$ sits as an extension
\[
0\to \cI^2_{G/\bP}(3)\to \cK\to \cN^\vee_{G/\bP}(3)\otimes\cI_{S^{[2]}/G}\cong \cN_{G/\bP}\otimes \cI_{S^{[2]}/G}\to 0.
\]
We note that $h^i(\cI^2_{G/\bP}(3))=0$ for $i\ge 1$ since $h^i(\cI_{G/\bP}(3))$ and $h^i(\cN_{G/\bP})$ are all known.
So by \autoref{lem:190}, we obtain $h^0(\cK) =h^0(\cI^2_{G/\bP}(3))=1$ and  $h^1(\cK)=h^1(\cN_{G/\bP}\otimes \cI_{S^{[2]}/G})=1$.
Looking at cohomology in the first column of the diagram, we see that
$H^0(\cI^2_{G/\bP}(3))=H^0(\cK)=H^0(\cI^2_{S^{[2]}/\bP}(3))$, all being generated by the Pfaffian $\pf$.
Therefore, we have another isomorphism $H^0(\cI^2_{S^{[2]}/G}(3))\simto H^1(\cK)=H^1(\cN_{G/\bP}\otimes \cI_{S^{[2]}/G})$ given by the boundary map, from which we may then conclude that $H^1(\bP^{14},\cI_{S^{[2]}/\bP}^2(3))=0$ since $H^1(G,\cI_{S^{[2]}/G}^2(3))=0$.
\end{proof}

\subsection{Smoothness criterion}\label{subsec:smoothness}
Unlike the case of square $4$, due to the large codimension of the Grassmannian $G$ in $\bP^{14}$, using the Jacobian criterion to characterize the singular locus of a cubic hypersurface in $G$ is not feasible.
The main goal of this section is to establish an alternative criterion in \autoref{prop:35cubics}.

Recall that $G$ is the singular locus of the Pfaffian cubic hypersurface $\Pf=Z(\pf)$, and the $15$ Plücker quadrics can be identified as the partial derivatives $\frac{\partial \pf}{\partial x_i}$ for $\set{x_i}_{0\le i \le 14}$.

We begin by a simple lemma.
\begin{lemma}
\label{lemma:xi_f_as_partial_pf}
Consider the Pfaffian cubic $\pf$.
For each $x_i$ there exists some $15\times 15$ matrix $A_i$ with scalar entries such that the quartic $x_i\cdot \pf$ can be written as
\[
x_i\cdot \pf =
\partial \pf \cdot A_i \cdot \partial \pf^T,
\]
where $\partial\pf$ denotes the row matrix $\begin{psmallmatrix} \frac{\partial \pf}{\partial x_{0}}&\dots&\frac{\partial \pf}{\partial x_{14}} \end{psmallmatrix}$.
\end{lemma}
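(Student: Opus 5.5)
The plan is to derive the identity from the classical fact that the polar map of the Pfaffian cubic is a birational involution (the quadro-quadric Cremona transformation attached to the Severi variety $\Gr(2,6)$), whose composition with its own inverse is multiplication by $\pf$. Concretely, identify $\bP^{14}=\bP(\bw2 V_6)$ with the projective space of skew-symmetric $6\times 6$ matrices $M=(m_{ab})$. The linear coordinates $x_i$ are then the entries $m_{ab}$ with $a<b$, and $\pf=\pf(M)$. The partial derivatives $\frac{\partial\pf}{\partial m_{ab}}$ are, up to sign, the $4\times4$ sub-Pfaffians of $M$ obtained by deleting rows and columns $a,b$. Let $M^\sharp$ be the skew $6\times 6$ matrix built from these $4\times 4$ sub-Pfaffians with the standard signs. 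Equivalently, $M^\sharp$ is the image of $\tfrac12 M\wedge M\in\bw4 V_6\cong \bw2 V_6^\vee$, so its entries are exactly the $15$ polars $\partial\pf$, up to signs and a fixed linear identification.

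The key steps, in order, are the following.

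\begin{enumerate}
\item Recall (or check via the Laplace-type expansion of the Pfaffian) the adjugate identity $M\cdot M^\sharp=\pf(M)\cdot\Id$, with appropriate sign conventions. From it one gets $\pf(M^\sharp)=\pf(M)^2$ whenever $\pf(M)\neq0$, and $M^\sharp$ is proportional to $\pf(M)M^{-1}$.

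\item Apply the same construction to $M^\sharp$. The entries of $(M^\sharp)^\sharp$ are $4\times4$ Pfaffians of $M^\sharp$, hence \emph{quadratic forms in the entries of $M^\sharp$}, that is, quadratic forms in the polars $\partial\pf$ with constant coefficients.

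\item On the open set $\pf(M)\ne0$, compute $(M^\sharp)^\sharp=\pf(M^\sharp)(M^\sharp)^{-1}=\pf(M)^2\cdot\big(\pf(M)M^{-1}\big)^{-1}=\pf(M)\cdot M$, possibly up to a universal nonzero constant and sign. Since both sides are polynomials, the identity $(M^\sharp)^\sharp=c\cdot\pf(M)\,M$ holds identically.

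\item Read off the $(a,b)$-entry. It says that $m_{ab}\cdot\pf$ is a fixed quadratic form in the $\partial\pf$. Writing this quadratic form as $\partial\pf\cdot A_i\cdot\partial\pf^T$ with $A_i$ a symmetric scalar matrix gives the claim for $x_i=m_{ab}$.
\end{enumerate}

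A more invariant way to phrase the same argument: the polar map $\bP(\bw2 V_6)\dashrightarrow\bP(\bw2 V_6^\vee)$ is given by quadrics, and its inverse is the polar map of the dual Pfaffian $\pf^\vee$, also given by quadrics. The composition is the identity multiplied by a cubic factor, and by $\SL(V_6)$-equivariance that factor must be a multiple of $\pf$. Taking $\partial\pf^\vee$ evaluated at $\partial\pf$ then yields exactly $x_i\cdot\pf$.

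The main obstacle is only bookkeeping: fixing the signs and normalizing constants in $M^\sharp$ so that the adjugate identity holds exactly, and ensuring the constant $c$ in the third step is nonzero. To handle this, I would check $c\ne0$ at a single point, for example $M$ equal to the standard symplectic form, where $M^\sharp$ and $(M^\sharp)^\sharp$ are immediate to compute. Alternatively, $c\neq0$ follows from birationality of the polar map, which makes the composition nontrivial.
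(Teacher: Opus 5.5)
Your argument is correct. It arrives at the paper's choice of matrices: the $(a,b)$-entry of $(M^\sharp)^\sharp$ is, up to sign, the $4\times 4$ sub-Pfaffian of $M^\sharp$ on the complementary indices, i.e.\ $\frac{\partial\pf}{\partial m_{ab}}$ evaluated at $M^\sharp$. So your $A_i$ is, up to the signs relating the entries of $M^\sharp$ to the polars, the matrix of the Plücker quadric $\frac{\partial\pf}{\partial x_i}$, which is exactly what the paper takes.

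The difference is only in the justification. The paper checks the identity directly in Plücker coordinates, writing out the case $x_0=p_{01}$. You instead obtain all fifteen identities at once from the Pfaffian adjugate identity $M M^\sharp=\pf(M)\Id$, which gives $(M^\sharp)^\sharp=\pm\pf(M)\,M$ on the locus $\pf\neq 0$, and hence everywhere by density.

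The direct check needs no theory, but it does not explain why the polar quadric is the right $A_i$. Your route explains it: the polar map of $\pf$ is a quadro-quadric Cremona involution. The same mechanism works for the other Severi cubics, for instance $\mathrm{adj}(\mathrm{adj}\,A)=\det(A)\,A$ for $3\times 3$ matrices.

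One simplification: your third step already shows that $c=\pm 1$, so the separate check that $c\neq 0$ is not needed. With the normalization $M^\sharp=\pf(M)M^{-1}$ one gets $\pf(M^\sharp)=-\pf(M)^2$ and hence $c=-1$.
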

\begin{proof}
We work in the Plücker coordinates with $\set{x_0,\dots,x_{14}}=\set{p_{01},\dots,p_{45}}$ so the Pfaffian cubic becomes
\[
\pf = \sum_{(ab)(cd)(e\!f)\vdash\set{0,\dots,5}}(-1)^{\sigma(abcde\!f)}p_{ab}p_{cd}p_{e\!f},
\]
where the sum is taken over all $15$ partitions of $\set{0,\dots,5}$ into three pairs, and $\sigma$ is the signature of a permutation.
We then check directly that $A_i$ can be taken as the matrix of the Plücker quadric $\frac{\partial\pf}{\partial x_i}$.
For example, for $x_0=p_{01}$ we have
\[
p_{01}\cdot\pf=
\tfrac{\partial \pf}{\partial p_{23}}\tfrac{\partial \pf}{\partial p_{45}}-
\tfrac{\partial \pf}{\partial p_{24}}\tfrac{\partial \pf}{\partial p_{35}}+
\tfrac{\partial \pf}{\partial p_{25}}\tfrac{\partial \pf}{\partial p_{34}},
\]
using the matrix of the quadric
$\tfrac{\partial \pf}{\partial p_{01}}=p_{23}p_{45}-p_{24}p_{35}+p_{25}p_{34}$.
\end{proof}

Recall that the $35$-dimensional kernel of the multiplication map 
\[
H^0(\cO_{\bP}(1))\otimes H^0(\cI_{G/\bP}(2))\onto H^0(\cI_{G/\bP}(3)),
\]
parametrizes linear relations among the quadrics $\frac{\partial \pf}{\partial x_{0}},\dots,\frac{\partial \pf}{\partial x_{14}}$.
In the sequel, we fix a basis
\[
    \set{\sum_{i=0}^{14} \ell_{i}^{(m)}\otimes \frac{\partial \pf}{\partial x_{i}}}_{m\in\set{1,\dots,35}}
\]
for the kernel, where $\ell_i^{(m)}$ are linear forms.
We will also denote by $\Syz$ the $15\times 35$ matrix with entries being $\ell_i^{(m)}$ so that
\[
\partial \pf\cdot \Syz = 0,
\]
where $\partial\pf$ is the row matrix $\begin{psmallmatrix} \frac{\partial \pf}{\partial x_{0}}&\dots&\frac{\partial \pf}{\partial x_{14}} \end{psmallmatrix}$.

Our next observation is that these linear relations provide a criterion for determining the singular locus of a cubic hypersurface in $G$ more convenient than the Jacobian criterion, due to the large codimension of $G$ in $\bP^{14}$.
\begin{proposition}\label{prop:35cubics}
Let $F\coloneqq Z(f)$ be a cubic hypersurface defined by $f\in H^0(\cO_{\bP}(3))$ not containing the Grassmannian $G$.
We consider the following $35$ cubic polynomials
\[
f^{(m)}\coloneqq\sum_{i=0}^{14} \ell_{i}^{(m)}\cdot \frac{\partial f}{\partial x_{i}},\quad
m\in\set{1,\dots,35}.
\]
Then the singular locus $X\coloneqq\Sing(G\cap F)$ contains the scheme-theoretical intersection $X_0\coloneqq G\cap Z(f^{(m)})$ of $G$ and the $35$ cubics $Z(f^{(m)})$.
Conversely, we also have the inclusion $X_\red\subset X_0$.
\end{proposition}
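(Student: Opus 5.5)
The idea is to read the $35$ syzygies as a family of vector fields along $G$ that generate the affine tangent bundle of $G$. Once that is known, the cubics $f^{(m)}$ restricted to $G$ are exactly the derivatives of $f$ along $G$, which is what cuts out $\Sing(G\cap F)$. Write $\hat T_pG\subset V_{15}$ for the affine cone over the embedded tangent space of $G$ at $p$; it has dimension $9$ and contains $p$. For each $m$ let $\ell^{(m)}=(\ell^{(m)}_0,\dots,\ell^{(m)}_{14})$, a linear map $V_{15}\to V_{15}$, that is, a section of $V_{15}\otimes\cO_{\bP}(1)$.

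\textbf{Step 1: the syzygy vectors are tangent to $G$.} Differentiate the identity $\partial\pf\cdot\Syz=0$ with respect to $x_j$ and evaluate at $p\in G$. All terms containing $\frac{\partial\pf}{\partial x_i}(p)$ vanish, since $G=\Sing(\Pf)$. What remains is $\sum_i \frac{\partial^2\pf}{\partial x_i\partial x_j}(p)\,\ell_i^{(m)}(p)=0$, so $\ell^{(m)}(p)$ lies in the kernel of the Hessian of $\pf$ at $p$. The Hessian at a point of $G$ is the quadratic tangent cone, and by the discussion preceding \autoref{prop:coble-quadratic-form} it has corank $\dim G+1=9$. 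Its kernel therefore contains the $9$-dimensional space $\hat T_pG$ and so equals it. Hence $\ell^{(m)}(p)\in\hat T_pG$, and the $\ell^{(m)}$ define a bundle map $\Phi\colon\cO_G(-1)^{\oplus35}\to\hat T_G$ into the affine tangent bundle. Recall that $\hat T_G$ is an extension of $\cT_G(-1)$ by $\cO_G(-1)$, with $\cO_G(-1)$ given by the Euler direction.

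\textbf{Step 2: $\Phi$ is surjective.} This is the step I expect to be the main obstacle. I would argue by $\SL(V_6)$-equivariance. The kernel of $H^0(\cO_\bP(1))\otimes H^0(\cI_{G/\bP}(2))\to H^0(\cI_{G/\bP}(3))$ is a $35$-dimensional representation, namely $\mathfrak{sl}(V_6)$ up to duality. The map $\Phi$ is equivariant and $G$ is homogeneous, so the rank of $\Phi$ is constant on $G$. It is then enough to check surjectivity at one point, say $p=e_0\wedge e_1$. There one can either compute the $\ell^{(m)}(p)$ directly, or use the interpretation of the syzygies as the infinitesimal $\mathfrak{sl}_6$-action: the images $\xi\cdot p$ for $\xi\in\mathfrak{gl}_6$ sweep out all of $\hat T_pG$, including $p$ itself. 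Should this interpretation fail, a direct rank computation of the $15\times35$ matrix $\Syz(p)$ would settle it.

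\textbf{Step 3: conclusion.} For $p\in G$ we have $f^{(m)}(p)=df_p(\ell^{(m)}(p))$, so locally on $G$ the ideal $(f^{(m)})_m$ equals the ideal generated by the derivatives $D f$, where $D$ runs over local sections of $\hat T_G$; this uses Step 2. These derivatives are intrinsic to $f|_G$, since contracting $df$ with a vector in $\hat T_pG$ does not depend on how $f$ is extended off $G$. Because the Euler field lies in $\hat T_G$, Euler's identity shows that $3f$ belongs to this ideal. The remaining generators $Df$ with $D$ tangent to $G$ are precisely the local Jacobian equations of the hypersurface $Z(f|_G)\subset G$. Hence the ideal of $X_0$ in $G$ contains $f$ and the local Jacobian ideal of $G\cap F$, which gives the scheme-theoretic inclusion $X_0\subset X$. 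Conversely, let $p$ be a closed point of $X$, i.e. a singular point of $G\cap F$. Then $f(p)=0$ and $df_p$ vanishes on $T_pG$, hence on all of $\hat T_pG$ by Euler's identity. By Step 1, every $f^{(m)}(p)=df_p(\ell^{(m)}(p))$ vanishes, so $p\in X_0$ and $X_\red\subset X_0$. The hypothesis $G\not\subset F$ ensures that $G\cap F$ is a genuine Cartier divisor in $G$, so that its singular locus is defined by these Jacobian conditions.
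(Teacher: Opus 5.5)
Your proof is correct. Its first half (Step~1 and the pointwise argument for $X_\red\subset X_0$) is the paper's argument: the paper also differentiates the syzygies to see that the entries of $H\cdot\Syz$ lie in $H^0(\cI_{G/\bP}(2))$. It then uses that the $16\times 15$ Jacobian $J$ of $G\cap F$ has rank at most $6$ at singular points.

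For $X_0\subset X$ you take a different route. The paper works with the $7\times 7$ minors of $J$. From the exact sequence $\cO_G(-3)^{\oplus 35}\xrightarrow{\Syz}\cO_G(-2)^{\oplus 15}\to\cN^\vee_{G/\bP}\to 0$ (the first syzygies of the Plücker ideal are linear), it identifies $\cN_{G/\bP}$ with $\ker(\Syz^T|_G)$, so that $J^T|_{X_0}$ factors through a rank-$6$ bundle. It then shows $f|_{X_0}=0$ separately, via the explicit identity of \autoref{lemma:xi_f_as_partial_pf}.

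Your Step~2 is equivalent to that exactness: both say the image of $\Syz|_G$ is the whole kernel of the Hessian on $G$, i.e.\ $\hat T_G$. You obtain it from $\SL(V_6)$-equivariance and a single point instead. After that your argument is intrinsic: the $f^{(m)}|_G$ generate the ideal of derivatives of $f|_G$ along $\hat T_G$. Since the Euler field is a section of $\hat T_G$, $f$ lies in that ideal automatically, so \autoref{lemma:xi_f_as_partial_pf} is not needed.

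Your route also proves more. Because $G$ is reduced, pointwise vanishing of $H\cdot\Syz$ puts $\ell^{(m)}|_G$ in the subbundle $\hat T_G(1)$, so Step~1 holds ideal-theoretically. The equality of ideals in your Step~3 then gives $X=X_0$ scheme-theoretically. The paper's rank argument only gives $X_\red\subset X_0$, which suffices there since $S^{[2]}$ is smooth. In exchange, the paper stays within the Jacobian-minor definition of $\Sing$ and uses only standard facts about the Plücker ideal, with no Lie-theoretic input.

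Three things to tighten.
\begin{itemize}
\item The hedge in Step~2 is unnecessary. $\SL(V_6)$-invariance of $\pf$ gives $d\pf_x(\xi\cdot x)=0$ for $\xi\in\mathfrak{sl}(V_6)$, the action $\mathfrak{sl}(V_6)\to\mathfrak{gl}(\bw2 V_6)$ is injective, and both spaces are $35$-dimensional. So the syzygies are exactly the fields $x\mapsto\xi\cdot x$.
\item You should check that $\mathfrak{sl}(V_6)\cdot p$, and not only $\mathfrak{gl}(V_6)\cdot p$, contains $p=e_0\wedge e_1$; take $\xi=\mathrm{diag}(1,0,0,0,0,-1)$.
\item Say explicitly that your local Jacobian ideal of $Z(f|_G)\subset G$ agrees with the paper's ideal generated by $G$, $F$ and the $7\times 7$ minors of $J$. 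Both are the Fitting ideal of $\Omega_{G\cap F}$ that defines the singular locus.
\end{itemize}
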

Namely, in our case, when $F=Z(f)$ is given by the first degeneracy locus $D_5(\varphi)$, we see that $S^{[2]}=\Sing(F)$ coincides with the scheme-theoretical intersection of $G$ and the $35$ cubics $Z(f^{(m)})$.

Note also that in the criterion we do not require the original cubic hypersurface $F$, as its equation can be recovered from the $35$ cubics.
This is similar to when one applies the usual Jacobian criterion for a hypersurface, where the hypersurface itself can be recovered from the partial derivatives using Euler's identity.
\begin{proof}
First we prove that $f^{(m)}(x)=0$ for $x\in X=\Sing(G\cap F)$, which will show the inclusion $X_\red\subset X_0$.
Recall that $G$ is the singular locus of the Pfaffian cubic hypersurface, and the $15$ Plücker quadrics can be identified as the partial derivatives $\frac{\partial \pf}{\partial x_i}$.
In particular, the Hessian matrix of the Pfaffian cubic
\[
H\coloneqq \left(\dfrac{\partial^2\pf}{\partial x_i\partial x_j}\right)_{0\le i,j\le 14}
\]
has rank $6$ everywhere on $G$.

Differentiating each linear relation $\sum_{i=0}^{14} \ell_{i}^{(m)}\cdot \frac{\partial \pf}{\partial x_{i}}=0$ with respect to the variable $x_{j}$ gives
\[
\sum_{i=0}^{14} \Big(\ell_{i}^{(m)}\cdot \frac{\partial^2 \pf}{\partial x_{j} \partial x_{i}}+\frac{\partial \ell_{i}^{(m)}}{\partial x_{j}}\cdot \frac{\partial \pf}{\partial x_{i}}\Big)=0.
\]
Since $\frac{\partial \pf}{\partial x_i}$ vanishes along $G$, this shows that the following quadrics vanishes along $G$ as well
\begin{equation}
\label{eq:H_Syz_vanishes}
\sum_{i=0}^{14} \ell_{i}^{(m)}\cdot \frac{\partial^2 \pf}{\partial x_{j} \partial x_{i}}\in H^0(\cI_{G/\bP}(2)).
\end{equation}
In other words, the matrix $(H\cdot \Syz)|_G$ is identically zero, where $\Syz$ is the matrix with linear entries $\ell_i^{(m)}$.

On the other hand, consider the $16\times 15$ Jacobian matrix of $G\cap F$
\[
J\coloneqq 
\begin{pNiceArray}{c}
H\\
\hdottedline
\partial f
\end{pNiceArray}
\]
with an extra row of the partial derivatives of $f$.
By the Jacobian criterion, the matrix $J$ must have rank at most $6$ for $x\in \Sing(G\cap F)$, so the last row must be a linear combination of the other rows.
In other words, there are scalars $\gamma_{0},\dots,\gamma_{14}\in\bC$ such that
\[
\forall 0\le i\le 14,\quad \frac{\partial f}{\partial x_{i}}(x)=\sum_{j=0}^{14}\gamma_{j}\cdot \frac{\partial ^2\pf}{\partial x_{i}\partial x_{j}}(x),
\]
which gives 
\[
f^{(m)}(x)=\sum_{j=0}^{14}\gamma_{j}\Big(\sum_{i=0}^{14}\ell_{i}^{(m)}(x)\cdot \frac{\partial^2\pf}{\partial x_{i} \partial x_{j}}(x)\Big)=0
\]
by~\eqref{eq:H_Syz_vanishes}, as desired.

Now we prove the other inclusion $X_0\subset X$.
Note that since $X$ is the singular locus $\Sing(G\cap F)$, it is the scheme-theoretical intersection of $G$, $F$, and all the $7\times 7$-minors of the Jacobian matrix $J$.
Therefore, it suffices to show that $X_0$ is contained in $F$ and all $7\times 7$-minors of $J$ vanish along $X_0$, that is, $\bw7 J$ is identically zero along $X_0$.
We first prove the latter.

We have the following exact sequence
\[
\dots \to \cO_{\bP}(-3)^{\oplus 35} \stackrel{\Syz}{\to} \cO_{\bP}(-2)^{\oplus 15}\to \cI_{G/\bP}\to 0
\]
where $\Syz$ is defined by the linear forms $\ell_i^{(m)}$. Tensoring with $\cO_G$ yields an exact sequence
\[
\cO_{G}(-3)^{\oplus 35} \stackrel{\Syz}{\to} \cO_{G}(-2)^{\oplus 15}\to
\cN_{G/\bP}^\vee\to 0
\]
of vector bundles on $G$, and dualizing we obtain the exact sequence
\[
0 \to \cN_{G/\bP} \to \cO_G(2)^{\oplus 15} \xrightarrow{\Syz^T|_G} \cO_G(3)^{\oplus 35}
\]
that identifies $\cN_{G/\bP}$ as the kernel bundle of $\Syz^T|_G$.

On the other hand, we observe that
\[
J\cdot \Syz =
\begin{pNiceArray}{ccc}
\Block{2-3}{H\cdot \Syz}\\
&\\
\hdottedline
f^{(1)}&\cdots&f^{(35)}
\end{pNiceArray},
\]
where we recall that the matrix $H\cdot \Syz$ becomes identically zero along $G$, while the last row consists of exactly the $35$ cubics $f^{(m)}$.
Therefore, the matrix $J\cdot \Syz$ becomes identically zero along $X_0$, and the image of $J^T|_{X_0}$ as the vector bundle map
\[
J^T|_{X_0}\colon \cO_{X_0}(1)^{\oplus 15}\oplus \cO_{X_0}\to \cO_{X_0}(2)^{\oplus 15}
\]
lies in the kernel of $\Syz^T|_{X_0}$, which is exactly $\cN_{G/\bP}|_{X_0}$.
In other words, $J^T|_{X_0}$ factors as
\[
J^T|_{X_0}\colon \cO_{X_0}(1)^{\oplus 15}\oplus \cO_{X_0}\to \cN_{G/\bP}|_{X_0} \into \cO_{X_0}(2)^{\oplus 15}.
\]
This means $\bw 7 J^T$ factors through $\bw7\left(\cN_{G/\bP}|_{X_0}\right)=0$ and therefore must be identically zero along $X_0$.

It remains to show that $X_0$ is contained in the cubic hypersurface $F$.
For each $x_i$, by \autoref{lemma:xi_f_as_partial_pf} we have a matrix $A_i$ such that
\[
x_i\cdot \pf =\partial \pf \cdot A_i\cdot \partial \pf^T.
\]
On the other hand, Euler's identity for homogeneous polynomials implies that
\[
x_i\cdot \pf = x_i\cdot (\tfrac13 \partial \pf \cdot x^T),
\]
where $x^T$ is the column vector consisting of the variables $x_i$.
Combining the two, we see that
\[
\partial \pf \cdot (A_i\cdot \partial \pf^T - \tfrac13 x_i\cdot x^T)=0,
\]
which provides a quadratic relation among $\partial \pf$.
This means that there exists a $35\times 1$ row matrix $B_i$ with linear entries such that
\[
A_i\cdot \partial \pf^T - \tfrac13 x_i\cdot x^T = \Syz\cdot B_i.
\]
Now we consider
\[
\begin{aligned}
x_i\cdot f &= x_i\cdot (\tfrac13 \partial f \cdot x^T) \quad \text{(Euler's identity)}\\
&= \partial f\cdot(\tfrac13 x_i\cdot x^T)\\
&=\partial f \cdot (A_i\cdot \partial \pf^T - \Syz\cdot B_i)\\
&=\partial f \cdot A_i\cdot \partial \pf^T - (f^{(m)})_{1\le m\le 35}\cdot B_i\quad  \text{(definition of the cubics $f^{(m)}$)}.
\end{aligned}
\]
Thus $x_i\cdot f$ vanishes whenever $\partial \pf$ and $f^{(m)}$ vanish, in other words, it vanishes along $X_0=G\cap Z(f^{(m)})_{1\le m\le 35}$.
Therefore $f$ must vanish along $X_0$ as well, which concludes the proof.
\end{proof}

\subsection{End of proof}
With all these ingredients, we are now in a position to prove \autoref{thm:coble-cubic}.

\begin{proof}[Proof of \autoref{thm:coble-cubic}]
Consider a general $(X,H)\in\cM_6^{(1)}$. We want to prove the existence of a unique cubic hypersurface $Y\subset \bP^{14}=\bP(H^0(X,H)^\vee)$ such that $X=\Sing(Y)$ scheme-theoretically.
We will proceed in two steps just as in the case of square $4$.

First we prove that $h^0(\bP^{14},\cI_{X/\bP}^2(3))=1$.
We consider the short exact sequence
\[
0\to \cI_{X/\bP}^2(3)\to \cI_{X/\bP}(3)\to \cN_{X/\bP}^\vee(3)\to 0.
\]
Cubic normality of $X$ in $\bP^{14}$ tells us that $h^0(\cI_{X/\bP}(3))=245$, $h^1(\cI_{X/\bP}(3))=0$, and thus
\begin{equation}
\label{eq:245}
h^0(\bP^{14},\cI_{X/\bP}^2(3)) + h^0(X,\cN_{X/\bP}^\vee(3)) = 245 + h^1(\bP^{14},\cI_{X/\bP}^2(3)).
\end{equation}
Now we let $(X,H)$ specialize to $(S^{[2]},L_2-2\delta)$ for $(S,L)$ a general K3 surface of genus $8$.
In this case, \eqref{eq:245} still holds since $S^{[2]}$ is cubically normal in $\bP^{14}$ by \autoref{prop:eqgenus8}, while $h^0(\bP^{14},\cI_{S^{[2]}/\bP}^2(3))=1$ and $h^1(\bP^{14},\cI_{S^{[2]}/\bP}^2(3))=0$ by \autoref{prep-lemma-cubic}.(\ref{prep-lemma-cubic-2}).
Using upper semicontinuity of cohomology ranks and the relation \eqref{eq:245}, these equalities must hold for a general $(X,H)\in\cM_6^{(1)}$ as well.

Next we show that $Y$ is singular \emph{exactly} along $X$.
Consider a general one-parameter flat family
\[
\begin{tikzcd}
\cX\ar[d]\subset \bP^{14}\times\Delta\\
\Delta
\end{tikzcd}
\]
with central fiber $\cX_0=(S^{[2]}, L_2-2\delta)$, where $S$ is a general K3 surface of genus~$8$.
Again we assume that the base $\Delta$ is the formal scheme $\Spec\bC[[t]]$ and denote by $\eta$ the generic point.
Recall from \autoref{prep-lemma-cubic}.(\ref{prep-lemma-cubic-2}) that the Pfaffian cubic $Z(\pf)$ is the unique cubic hypersurface in $\bP^{14}$ that is singular along $\cX_0$.
We obtain a cubic hypersurface $\cY\coloneqq Z(F)\subset \bP^{14}\times\Delta$, where
\[
F\coloneqq \pf+t\cdot f_1+t^2\cdot f_2 +\dots\in H^0(\bP^{14},\cO(3))\otimes_\bC\bC[[t]],
\]
such that $\cY_\eta$ is the unique cubic hypersurface in $\bP^{14}_{k(\eta)}$ whose singular locus contains $\cX_\eta$.
Note that, by the generality assumption on the family, $f_1$ does not vanish entirely along the Grassmannian $G$.
Otherwise, one may write $f_1=\sum_{i=0}^{14}\ell_i\cdot \frac{\partial\pf}{\partial x_i}$, where $\ell_i=\ell_i(x)$ are linear forms.
Then we may check that in first-order we have
\[
\pf + \varepsilon \cdot f_1 = \pf(x+\varepsilon \cdot \ell(x)).
\]
Therefore, up to taking the first-order automorphism $x\mapsto x+\varepsilon\cdot \ell(x)$ of $\bP^{14}$,
the family of cubic hypersurface we get is identically the Pfaffian,
whence the corresponding first-order deformation must be entirely contained in $G$, contradicting our analysis of their deformation in \autoref{54again}.

Consider the intermediate subvariety
\[
\widetilde{\cZ}\coloneqq Z(\tfrac{\partial F}{\partial x_i})_{i=0,\dots,14}
\]
whose fiber $\widetilde{\cZ}_\eta$ over the generic point $\eta$ coincides with $\Sing(\cY_\eta)$, while the fiber $\widetilde{\cZ}_0$ over $0$ coincides with $\Sing(\cY_0)=G$.
In particular, $\widetilde{\cZ}$ contains $\cX$.
Let $\cZ$ denote the schematic closure of $\widetilde{\cZ}_\eta$ in $\widetilde{\cZ}$ (namely, the union of the irreducible components of $\widetilde\cZ$ dominating $\Delta$); in particular, $\cZ$ is flat over $\Delta$.
Note that $\cZ$ contains $\cX$, since $\cX$ is the schematic closure of $\cX_\eta\subset \widetilde{\cZ}_\eta$. Therefore, $\cZ_0$ contains $\cX_0$ while being contained in $\widetilde{\cZ}_0 = G$ .

Now observe that for every linear relation
\[
\sum_{i=0}^{14}\ell_i^{(m)}\cdot \tfrac{\partial \pf}{\partial x_i}=0
\]
among the Plücker quadrics, the polynomial
\[
\sum_{i=0}^{14}\ell_i^{(m)}\cdot \tfrac{\partial F}{\partial x_i}
=\sum_{i=0}^{14}\ell_i^{(m)}\cdot \left(\tfrac{\partial \pf}{\partial x_i} + t\cdot \tfrac{\partial f_1}{\partial x_i} + O(t^2)\right)=t\left(\sum_{i=0}^{14}\ell_i^{(m)}\cdot\tfrac{\partial f_1}{\partial x_i} + O(t)\right)
\]
vanishes along $\cZ$, hence so does the polynomial
\[
\sum_{i=0}^{14}\ell_i^{(m)}\cdot\tfrac{\partial f_1}{\partial x_i} + O(t)=f_1^{(m)} + O(t).
\]
In particular, the cubic $f_1^{(m)}$ vanishes along $\cZ_0$.

By \autoref{prop:35cubics}, this implies that for the cubic hypersurface $G\cap Z(f_1)\in|\cO_G(3)|$, its singular locus $\Sing (G\cap Z(f_1))$ contains $\cZ_0$, which in turn contains $\cX_0=S^{[2]}$.
It follows from \autoref{prep-lemma-cubic}.(\ref{prep-lemma-cubic-1}) that $G\cap Z(f_1)$ is the first degeneracy locus $D_5(\varphi)$, whose singular locus is exactly $\cX_0$ by assumption.
Therefore, we have an equality $\cX_0=\cZ_0$.

Now we can conclude in the same way as in the proof \autoref{thm:coble-quartic}.
Looking at the Hilbert polynomials, for the generic point $\eta\in\Delta$ we have
\[
h_{\cX_\eta}\le h_{\cZ_\eta}= h_{\cZ_0}= h_{\cX_0},
\]
and therefore they must all be equal.
This proves that $\cX_\eta=\cZ_\eta=\Sing(\cY_\eta)$ as desired.
\end{proof}

We can also establish the minimal generators for the homogeneous ideal of $X\subset \bP^{14}$:

\begin{corollary}
\label{cor:square-6-homogeneous-ideal}
For a general $(X,H)\in\cM_6^{(1)}$, the homogeneous ideal of $X$ in $\bP^{14}$ is generated by the $15$ polar quadrics $\frac{\partial f}{\partial x_i}$ (with no linear syzygies) and $20$ cubics.
\end{corollary}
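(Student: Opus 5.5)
Let $Y=Z(f)$ be the Coble type cubic of \autoref{thm:coble-cubic}. The plan has two parts. First, the $15$ polars have no linear syzygies and span all quadrics through $X$. Second, the degree-$3$ part of the ideal is the span of $x_j\frac{\partial f}{\partial x_i}$ plus a $20$-dimensional complement, and nothing new is needed in degree $\ge 4$.

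\textbf{Degree $2$ and linear syzygies.} We have $h^0(\cI_{X/\bP}(2))=15$ for a general $X$. This holds at $S^{[2]}$ by \autoref{prop:eqgenus8}, is preserved by $2$-normality, and the Euler characteristic is constant. The polars $\frac{\partial f}{\partial x_i}$ lie in $H^0(\cI_X(2))$. Since $H^0(\cI_X^2(3))=\bC f$ (shown in the proof of \autoref{thm:coble-cubic}), \autoref{cor:no-syzygy}.(\ref{cor:no-syzygy-item}) identifies the space of linear syzygies with the kernel of $H^0(\cT_{\bP^{14}})\to H^0(\cN_{X/\bP})$. That kernel consists of infinitesimal projective automorphisms preserving $X$, i.e.\ $H^0(\cT_X)=0$ for a hyperkähler $X$. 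So there are no linear syzygies, and in particular the $15$ polars are linearly independent and span $H^0(\cI_X(2))$.

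\textbf{Degree $3$.} By \autoref{cor:no-syzygy}.(\ref{cor:no-syzygy-iso}), again using $H^1(\cI_X^2(3))=0$ from the proof of \autoref{thm:coble-cubic}, the quotient $H^0(\cI_X(3))/\langle x_j\frac{\partial f}{\partial x_i}\rangle$ is the cokernel of $H^0(\cT_{\bP})\to H^0(\cN_{X/\bP})$. This map is injective and its cokernel fits into
\[
0\to \operatorname{coker}\to H^1(\cT_X)\to H^1(\cT_{\bP}|_X).
\]
I will compute $h^0(\cN_{X/\bP})-224$ directly. From the Euler sequence on $X$, using $h^i(\cO_X(1))=0$ for $i>0$ and $h^1(\cO_X)=h^3(\cO_X)=0$, we get $h^1(\cT_\bP|_X)=0$ and $h^0(\cT_\bP|_X)=15^2-1=224$. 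Then the normal sequence together with $h^1(\cT_X)=21$ and $h^0(\cT_X)=0$ gives $h^0(\cN_{X/\bP})=224+21-\dim\ker\big(H^1(\cT_X)\to H^1(\cT_\bP|_X)\big)$ up to the $H^2$ term. The cleaner route is to say that the image of $H^0(\cN_{X/\bP})$ in $H^1(\cT_X)$ is the tangent space to deformations preserving the polarization, a hyperplane of dimension $20$, since $H^1(\cT_X)\to H^2(\cO_X)$ via $c_1(H)$ is surjective. The cokernel is therefore $20$-dimensional. As a sanity check, $245-15\cdot15=20$, which matches: $\dim H^0(\cI_X(3))=245$, and the $225$ products $x_j\frac{\partial f}{\partial x_i}$ are independent because there are no linear syzygies.

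\textbf{Generation in degree $\ge 4$.} This is the main obstacle. Note that $X$ is not cut out by quadrics, since the polar quadrics cut out $\Sing(Y)$ only together with cubics. I would argue by semicontinuity of graded Betti numbers from $S^{[2]}$. For $S^{[2]}\subset G\subset\bP^{14}$ the ideal is generated in degrees $\le 3$: it is generated by the Plücker quadrics plus the cubics coming from the Gulliksen--Negård resolution, whose first term $M(-2)^{\oplus6}=\cN_{G/\bP}(-4)^{\oplus 6}$ contributes cubic generators over $G$. This yields $\beta_{1,j}(S^{[2]})=0$ for $j\ge4$. Because $X$ and $S^{[2]}$ are both projectively normal with the same Hilbert function, $\beta_{1,j}$ is upper semicontinuous in flat families of that type, so $\beta_{1,j}(X)=0$ for $j\ge 4$. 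Combined with the counts above, this gives exactly $15$ quadric generators and $20$ cubic generators. The step needing care is verifying that $\beta_{1,j}(S^{[2]})=0$ for $j\ge4$ from the Gulliksen--Negård complex: one must check that the twisted terms have no higher cohomology, via the Borel--Weil--Bott computations in \autoref{lem:190}, so that the sheaf resolution induces surjectivity onto $H^0(\cI(j))$ for $j\ge4$.
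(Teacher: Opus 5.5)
Your degree-$2$ and degree-$3$ analysis is the paper's argument. You show there are no linear syzygies among the polars via \autoref{cor:no-syzygy}.(\ref{cor:no-syzygy-item}) and injectivity of $H^0(\cT_{\bP^{14}})\to H^0(X,\cN_{X/\bP})$, and then count $245-225=20$ new cubics. A few corrections are needed.

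\textbf{The kernel of that map.} It is not literally $H^0(\cT_X)$. A vector field in the kernel restricts to a section of $\cT_X$, hence vanishes along $X$. You still need $H^0(\cT_{\bP^{14}}\otimes\cI_X)=0$, which the paper checks explicitly via the Euler sequence; it also follows from $X$ being irreducible and nondegenerate.

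\textbf{The value of $h^1(\cT_{\bP}|_X)$.} Your claim $h^1(\cT_{\bP}|_X)=0$ is wrong. The restricted Euler sequence gives $H^1(\cT_{\bP}|_X)\cong H^2(\cO_X)\cong\bC$. The map $H^1(\cT_X)\to H^1(\cT_{\bP}|_X)$ is contraction with $c_1(H)$, and its $20$-dimensional kernel is the cokernel you want. With $h^1=0$ you would get $21$, contradicting your own count. Since $245-225$ already gives $20$ once there are no linear syzygies, you can drop this detour and the use of \autoref{cor:no-syzygy}.(\ref{cor:no-syzygy-iso}).

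\textbf{Cut out by quadrics.} Your remark that $X$ is not cut out by quadrics is false. By \autoref{thm:coble-cubic}, $X=\Sing(Y)$ is cut out scheme-theoretically by the $15$ polar quadrics. Only the saturated homogeneous ideal needs the extra cubics.

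\textbf{Generation in degrees $\ge 4$.} This is where your route genuinely differs. The paper does not prove this step. It imports from \cite[Theorem~E.(2)]{projective-models} that a general $X$ is projectively normal, with ideal generated by $15$ quadrics plus at most $55$ cubics; after that only the syzygy count remains.

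Your semicontinuity of graded Betti numbers from $S^{[2]}$ is a self-contained reproof of that input. Consistently, $55=20+35$ is the number of cubic generators of $S^{[2]}$, whose Plücker quadrics have $35$ linear syzygies. The approach is sound in principle. Constant Hilbert function makes the family of coordinate rings flat. A uniform regularity bound ($\cI_X$ is $6$-regular) leaves only finitely many degrees to check, so the conclusion holds on an open set.

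As written, however, the key step for $S^{[2]}$ is only sketched, and the sketch is incomplete:
\begin{itemize}
\item The vanishings you need are for the twists $F_k(j)$ of the Gulliksen--Negård complex. They are not the groups $H^\bullet(\cN_{G/\bP}\otimes F_k)$ computed in \autoref{lem:190}.
\item Surjectivity of $H^0(M(j-2))^{\oplus 6}\to H^0(\cI_{S^{[2]}/G}(j))$ is not enough by itself. You also need the module $\bigoplus_{k\ge 1}H^0(G,M(k))$ to be generated by $H^0(G,M(1))$. Only then do the degree-$j$ elements come from cubics times forms.
\end{itemize}
Either carry out these Borel--Weil--Bott checks, or cite the earlier theorem as the paper does.
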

\begin{proof}
It was already established in \cite[Theorem~E.(2)]{projective-models} that $X$ is projectively normal and the homogeneous ideal of $X$ is generated by the $15$ polar quadrics plus at most $55$ cubics.
Using the Riemann--Roch formula, one immediately checks that the number of cubics required is $20$ plus the number of linear syzygies among $\frac{\partial f}{\partial x_i}$.
Therefore, it suffices to check that no such linear syzygy exists.

In view of \autoref{cor:no-syzygy}.(\ref{cor:no-syzygy-item}), we only need to check the injectivity of the natural map
\[
H^0(\bP^{14},\cT_{\bP^{14}}) \to H^0(X,\cN_{X/\bP}).
\]
Using the Euler sequence one can easily compute that $H^0(\bP^{14},\cT_{\bP^{14}}\otimes\cI_{X})=0$, hence $H^0(\bP^{14},\cT_{\bP^{14}})$ injects into $H^0(X,\cT_{\bP^{14}}|_X)$.
On the other hand, since $X$ is hyperkähler, we have $H^0(X,\cT_X)=H^0(X,\Omega_X)=0$ , hence $H^0(X,\cT_{\bP^{14}}|_X)$ injects into $H^0(X,\cN_{X/\bP})$, which concludes the proof.
\end{proof}

\section{Moduli spaces}
\label{sec:moduli_spaces}

In view of \autoref{thm:coble-quartic} and \autoref{thm:coble-cubic}, it is natural to consider the moduli spaces parametrizing the Coble type hypersurfaces, which should be birational to the moduli space $\cM_4^{(1)}$ and $\cM_6^{(1)}$ respectively.
In this section, we give a precise construction for these moduli spaces and study the geometry in relation to the moduli space of polarized hyperkähler fourfolds and the period domain.
We will focus mainly on the case $\cM_4^{(1)}$ and the corresponding Coble type quartics.
A similar analysis can of course also be carried out in the case of square~$6$.

\subsection{Moduli space and period map}
\label{subsec:moduli-intro}
We recall some basic properties of the moduli space $\cM_4^{(1)}$ and the period domain $\cP_4^{(1)}$, following the presentation in \cite[Section~3.9 and 3.10]{Debarre:survey}.

For a K3$^{[2]}$-type hyperkähler manifold $X$, there is a lattice structure
\[H^2(X,\bZ)\simeq\Lambda=\Lambda_{\KKK^{[2]}}\coloneqq U^{\oplus 3}\oplus E_8(-1)^{\oplus 2}\oplus\langle -2\rangle\]
given by the \emph{Beauville--Bogomolov--Fujiki} (BBF) form $q_X$.
We fix a primitive vector $h\in \Lambda$ of square $4$.%
\footnote{It will necessarily have divisibility $1$, and all such vectors lie in the same $O(\Lambda)$-orbit by the Eichler's criterion \cite[Theorem~2.9]{Debarre:survey}.}
For $(X,H)\in\cM_4^{(1)}$, any isometry $H^2(X,\bZ)\simeq\Lambda$ that maps the polarization $H$ to $h$ will map the period of $X$ into the space
\[
\Omega_{h}\coloneqq\setmid{x\in\bP(h^{\perp}\otimes\bC)}{ q(x)=0,\ q(x+\overline x)>0}.
\]
Taking the quotient by all isometries in $O(\Lambda)$ fixing $h$, we obtain the period domain
\[
\cP_4^{(1)}\coloneqq O(\Lambda, h)\backslash \Omega_h,
\]
which is an irreducible quasiprojective normal variety of dimension $20$.
One can then define an algebraic period map
\[\p\colon \cM_4^{(1)}\into \cP_4^{(1)},
\]
which is an open immersion by the global Torelli theorem \cite[Theorem~1.10]{Markman}. 

For every primitive sublattice $K\subset \Lambda$ of signature $(1,1)$ containing $h$, let $\cD_{4,K}^{(1)}\subset\cP_4^{(1)}$ be the image of the hyperplane $\bP(K^\perp_\bC)\subset\bP(h^{\perp}_\bC)$ by the quotient.
For each positive integer $d$, we define the \emph{Heegner divisor of discriminant} $d$ as the union
\[
\cD_{d}=\cD_{4,d}^{(1)}\coloneqq \bigcup_{K,\ \disc(K^\perp)=-d}\cD_{4,K}^{(1)}.
\]
It turns out that the divisor $\cD_{d}$ is non-empty when
\[
d\equiv 0,2,4,6,8,12\pmod {16},
\]
and is always irreducible \cite[Proposition~4.1]{Debarre-Macri}.

The complement of the image of the period map $\p$ is the union of the three Heegner divisors $\cD_4$, $\cD_{16}$, and $\cD_{20}$ \cite[Theorem~6.1]{Debarre-Macri}.
On the other hand, by the surjectivity of the period map for marked hyperkähler manifolds, for each period point in this locus we can still find some $(X, H)$, but the class $H$ will not be ample (only big and nef).
More precisely (see \cite[Section 3]{van-Geemen-Kapustka}), for general points on the divisor $\cD_4$, the linear system $|H|$ on $X$ is a \emph{Hilbert--Chow contraction}: these are therefore Hilbert squares $(S^{[2]}, L_2)$, where $(S,L)$ is a general quartic K3 surface;
for general points on $\cD_{16}$, the linear system $|H|$ is a \emph{Brill--Noether contraction}, with the class of the contracted divisor having square $-2$ and divisibility $1$;
for general points on $\cD_{20}$, the linear system $|H|$ is a small contraction contracting a Lagrangian plane \cite{Wierzba-Wisniewski}.

\subsection{Moduli space of Coble type quartics}
Recall that a given $(X,H)\in\cM_4^{(1)}$ has Hilbert polynomial $h_X(t)\coloneqq\chi(X,H^{\otimes t})=\binom{2t^2+3}{2}$. We denote by $\cH$ the irreducible component of the Hilbert scheme $\Hilb^{h_X}(\bP^9)$ whose general point is a hyperkähler fourfold of $\KKK^{[2]}$-type, embedded in $\bP^9$ with a complete linear system of square $4$.
As observed in \cite[Section~4.7]{projective-models}, $\cH$ is generically smooth of dimension $119$.

We have a rational dominant map $\varphi\colon\cH\dasharrow \cM_4^{(1)}$, as well as a map
$\coble\colon \cH\dasharrow |\cO_{\bP^9}(4)|$ ($\coble$ stands for ``Coble'') sending a general $X\in\cH$ to the $1$-dimensional subspace $H^0(\cI_{X/\bP^9}^2(4))\subset H^0(\cO_{\bP^9}(4))$.
By \autoref{thm:coble-quartic}, the map $\coble$ is birational onto its image, with inverse given by taking the singular locus of a quartic.
We denote by $\Sigma\subset |\cO_{\bP^9}(4)|$ the closure of the image of $\coble$, which is clearly invariant under the action of $\SL(10)$.
We also denote by $\Sigma^\circ$ the open subset where the singular locus of the corresponding quartic is a smooth hyperkähler fourfold $(X,H)\in \cM_4^{(1)}$.
\begin{lemma}
$\Sigma$ is not contained in the unstable locus for the action of $\SL(10)$.
Moreover, any $[f_4]\in\Sigma^\circ$ is semistable.
\end{lemma}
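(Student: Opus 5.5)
The second assertion implies the first, since $\Sigma^\circ$ is a nonempty open subset of $\Sigma$. So the plan is to show that any $[f_4]\in\Sigma^\circ$ is semistable, and to do this with a Hilbert--Mumford analysis of one-parameter subgroups. Fix $[f]\in\Sigma^\circ$ with $X=\Sing(Z(f))$ a smooth $\KKK^{[2]}$-type fourfold, and suppose $f$ is unstable. Then there is a one-parameter subgroup $\lambda$ of $\SL(10)$, diagonal in coordinates $x_0,\dots,x_9$ with weights $r_0\ge\dots\ge r_9$ summing to $0$, such that every monomial $x^a$ occurring in $f$ has $\sum a_ir_i>0$. The limit $\lim_{t\to0}\lambda(t)\cdot f$ is then $0$, so it does not exist as a point of $|\cO(4)|$.

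\textbf{Step 1: geometric consequence of instability.} I would use the standard destabilizing-flag argument, as for cubic or quartic hypersurfaces. Instability forces $f$ to be highly singular at the coordinate point $p_9=(0:\dots:0:1)$, or along a linear subspace cut out by the coordinates of smallest weight. For instance, $x_9$ carries the most negative weight $r_9<0$. Because $r_9\le$ the average weight $0$, every monomial with $a_9\ge 3$ has weight at most $3r_9+r_i$, which should be negative. Such monomials therefore cannot appear, so $\mathrm{mult}_{p_9}(Z(f))\ge2$ and $p_9\in\Sing(Z(f))=X$.

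\textbf{Step 2: conclude via the quadratic tangent cone.} I would refine Step 1 to show that the quadratic tangent cone of $Z(f)$ at $p_9$ has corank strictly greater than $\dim X+1=5$. The quadratic tangent cone is the coefficient of $x_9^2$, which is a quadric in $x_0,\dots,x_8$. Its monomials $x_ix_j$ survive only when $r_i+r_j+2r_9>0$. Combined with $\sum r_i=0$, this should force the quadric to involve at most four of the variables, so its rank is $\le4$ and its corank exceeds $5$. Alternatively, the monomial restrictions would force the partial derivatives to share a common linear subspace of positive dimension through $p_9$, giving $\dim \Sing(Z(f))>4$ or a non-reduced $\Sing(Z(f))$ at $p_9$.

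Either outcome contradicts the discussion at the start of \autoref{sec:Coble_hypersurfaces_general}. There, smoothness of $\Sing(Y)=X$ at $p$ forces the quadratic tangent cone to have corank exactly $\dim X+1$ and to be a smooth quadric in $\bP\cN_{X/\bP}(p)$, which is the nondegenerate $q$ of \autoref{prop:coble-quadratic-form}.

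\textbf{Main obstacle.} The obstacle is the weight bookkeeping in Step 2. The conditions $r_i+r_j+2r_9>0$ do not obviously bound the rank by $4$ for all weight vectors; the rank bound should hold only once one also uses that $f$ has no monomials with $a_9\geq3$ and that the linear part vanishes. If that fails, I would fall back on a semicontinuity argument. The semistable locus is open, and by \autoref{thm:coble-quartic} $\Sigma^\circ$ is irreducible and $\SL(10)$-invariant, so it suffices to exhibit one semistable point. The quartics $Z(q^2)$ coming from the genus-$7$ degeneration are natural candidates: $q^2$ is semistable because $Z(q)$ is a smooth quadric, stabilized by $\SO(10)$, hence polystable by Matsushima/Kempf. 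The catch is that $Z(q^2)$ is not in $\Sigma^\circ$, so this fallback only proves the first assertion. For the second assertion I would still need the direct Hilbert--Mumford bound from Steps 1 and 2.
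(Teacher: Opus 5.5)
Your fallback settles the first assertion in essentially the paper's way. $[q^2]\in\Sigma$ is the special fiber of the genus-$7$ degeneration, and its stabilizer $\SO(10)$ acts irreducibly on $V_{10}$. By Kempf's theorem, the stabilizer of an unstable point lies in a proper parabolic subgroup, so $[q^2]$ cannot be unstable. Matsushima's criterion goes the wrong way here. Polystability, which the paper obtains from Luna's criterion using that $\SO(10)$ has finite index in its normalizer, is not needed for this part.

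The gap is in the second assertion, which is the real content of the lemma.

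\textbf{Step~1 is false.} For the weights $(9,-1,\dots,-1)$, the monomial $x_0x_9^3$ has weight $6>0$. This subgroup destabilizes exactly the quartics $f=x_0g$, and when $g(p_9)\neq 0$ the point $p_9$ is a smooth point of $Z(f)$. In general, $p_9\in\Sing(Z(f))$ is forced only when $r_0\le 3|r_9|$.

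\textbf{Step~2 is not the obstacle.} Once $p_9\in\Sing(Z(f))$, rank $\ge 5$ of the tangent cone gives a nonzero $5\times5$ principal minor. Hence there is a permutation $\sigma$ of a $5$-set $I\subset\{0,\dots,8\}$ with $r_i+r_{\sigma(i)}>2|r_9|$ for all $i\in I$. Summing gives $\sum_{i\in I}r_i>5|r_9|$. This contradicts $\sum_i r_i=0$, because the five remaining weights are all $\ge r_9$. No extra input is needed.

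\textbf{What remains open.} Your argument therefore excludes only destabilizing subgroups with $r_0\le 3|r_9|$. For the others, one or two weights exceed $3|r_9|$. There you must find singular points on larger low-weight coordinate subspaces, and both the dimension count and the Hessian bound have to be redone case by case. Nothing in the proposal does this. The fallback cannot substitute for it: exhibiting one semistable point, plus openness of the semistable locus, gives semistability only on a dense open subset of $\Sigma^\circ$, never at every point.

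\textbf{The paper's route.} The paper avoids Hilbert--Mumford entirely. Taking the singular locus gives a morphism $\Sigma^\circ\to\cM_4^{(1)}$. Since $\Sigma^\circ\cap\Sigma^{ss}\neq\emptyset$ by the first assertion, its categorical quotient by $\SL(10)$ maps to $\cM_4^{(1)}$. By Zariski's main theorem this map is an immersion onto a dense subset. Given any $[f_4]\in\Sigma^\circ$, choose an effective Cartier divisor on the quasi-projective $\cM_4^{(1)}$ that avoids the image of $[f_4]$ but meets this dense subset. The closure of its preimage in $\Sigma$ is an $\SL(10)$-invariant divisor missing $[f_4]$, which is what yields semistability. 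So in the paper the first assertion is an input to the second, not a consequence of it.
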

\begin{proof}
From \autoref{prep-lemma}.(\ref{prep-lemma-2}), we see that for a smooth quadric $q$, the double quadric $[q^2]$ lies in $\Sigma\subset |\cO_{\bP^9}(4)|$.
We claim that $[q^2]$ is semistable for the action of $\SL(10)$, which then implies that a general $[f_4]\in \Sigma$ must be semistable as well.
We follow the argument in \cite{Debarre-Han-OGrady-Voisin} and use a result of Luna: $V_{10}$ is a faithful irreducible representation for the stabilizer $\SO(10)$ of $[q^2]$, hence $\SO(10)$ has finite index in its normalizer in $\SL(10)$ \cite[Proposition~5.4]{Debarre-Han-OGrady-Voisin}, which shows that $[q^2]$ is \emph{polystable} (semistable with closed orbit in the semistable locus) \cite[Corollaire~3]{Luna} and therefore semistable \emph{a fortiori}.

For $[f_4]\in\Sigma^\circ$, we see that it lies in the regular locus of the rational map $\varphi\circ \coble^{-1}\colon\Sigma^\circ \to \cM_4^{(1)}$.
On the other hand, denoting by $\Sigma^{\circ,ss}\coloneqq \Sigma^\circ\cap \Sigma^{ss}$ the semistable locus, we may consider the GIT quotient $\Sigma^{\circ,ss}\to \Sigma^{\circ,ss}\git\SL(10)$ and use the fact that it is categorical to complete the diagram
\[
\begin{tikzcd}
\Sigma^{\circ,ss}\ar[r,hookrightarrow]\ar[d]& \Sigma^\circ\ar[d]\\
\Sigma^{\circ,ss}\git\SL(10)& \cM_4^{(1)}
\end{tikzcd}
\]
which, by the Zariski's main theorem, yields an immersion $\Sigma^{\circ,ss}\git\SL(10)\into\cM_4^{(1)}$.
Now let $D$ be a nonzero effective Cartier divisor in $\cM_4^{(1)}$ avoiding $(\varphi\circ \coble^{-1})([f_4])$ and intersecting the dense subset $\Sigma^{\circ,ss}\git\SL(10)$.
Then (the closure of) its preimage in $\Sigma$ is an $\SL(10)$-invariant divisor avoiding $[f_4]$.
Therefore $[f_4]$ is indeed semistable.
\end{proof}
Therefore, we may consider the GIT quotient
\begin{equation}
\Mcoble4\coloneqq\Sigma\git \SL(10),
\end{equation}
which is a nonempty irreducible projective variety and serves as a compactified moduli space for Coble type quartics.
In this way, $\coble$ descends to a birational map $\overline{\coble}\colon\cM_4^{(1)}\dasharrow\Mcoble4$. The situation can be summarized in the following commutative diagram:
\[
\begin{tikzcd}
\cH^{\dim 119}\ar[r,dashrightarrow,yshift=0.6ex,"\coble"]\ar[d,dashrightarrow]
& \Sigma^{\dim 119} \ar[l,dashrightarrow,yshift=-0.6ex,"\Sing"]\ar[r,hookrightarrow]\ar[d,dashrightarrow]
& {|\cO(4)|}^{\dim 714}\ar[d,dashrightarrow]\\
{\cM_4^{(1)}}^{\dim 20}\ar[r,dashrightarrow,yshift=0.6ex,"\overline{\coble}"]
& {\Mcoble4}^{\dim 20}\ar[l,dashrightarrow,yshift=-0.6ex,"\m"]\ar[r,hookrightarrow]
& {|\cO(4)|\git \SL(10)}^{\dim 615}
\end{tikzcd}
\]
where we denote by $\m\colon \Mcoble4 \dasharrow \cM_4^{(1)}$ the rational inverse of $\overline{\coble}$ ($\m$ stands for ``modular'').

We remark that the construction above works, \emph{mutatis mutandis}, in the case of Coble type cubics for $\cM_6^{(1)}$
\[
\Mcoble3\coloneqq \Sigma_{\mathrm{Coble}3}\git \SL(15)\subset {|\cO_{\bP^{14}}(3)|\git \SL(15)}^{\dim 455}.
\]
The only noteworthy change is the semistability of the Pfaffian cubic, which we spell out as a lemma.
\begin{lemma}
The Pfaffian cubic $[\pf]\in |\cO_{\bP^{14}}(3)|$ is polystable for the action of $\SL(15)$.
\end{lemma}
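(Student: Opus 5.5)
The plan is to mimic the argument used above for the double quadric $[q^2]$, namely Luna's criterion together with the finite-index normalizer argument of \cite[Proposition~5.4]{Debarre-Han-OGrady-Voisin}. The first step is to identify the stabilizer of $[\pf]$ in $\SL(15)$. Write $\bP^{14}=\bP(\bw2 V_6)$. The group $\SL(V_6)$ acts on $\bw2 V_6$ through the second exterior power representation, and it preserves the Pfaffian up to the determinant character, hence it fixes $\pf$. Conversely, any element of $\SL(15)$ fixing $[\pf]$ must preserve $\Sing(\Pf)=G=\Gr(2,V_6)$. Since $\Aut(G)\subset \mathrm{PGL}(\bw2V_6)$ is $\mathrm{PGL}(V_6)$ (for $\dim V_6\neq 4$ there is no extra duality automorphism preserving the Plücker embedding of $\Gr(2,6)$ as a subvariety of $\bP(\bw2V_6)$), the identity component of the stabilizer is the image $H$ of $\SL(V_6)$, up to finite central elements.

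The second step is the Luna argument. The group $H\cong\SL(6)/\{\pm1\}$ is reductive, and $\bw2V_6$ is an irreducible faithful representation of it (faithful after quotienting by the kernel $\pm1$). I would then show that $H$ has finite index in its normalizer $N_{\SL(15)}(H)$. By Schur's lemma, the centralizer of $H$ in $\SL(15)$ consists of scalars, hence is finite. So $N(H)/(H\cdot Z(\SL(15)))$ embeds into $\mathrm{Out}(H)$, which is finite. More concretely, the only outer automorphism of $\SL(6)$ comes from transpose-inverse, and it sends $\bw2V_6$ to $\bw2V_6^\vee$, which is not isomorphic to $\bw2V_6$; hence outer automorphisms cannot even be realized inside $\SL(15)$. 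This is exactly the input of \cite[Proposition~5.4]{Debarre-Han-OGrady-Voisin}. Luna's criterion \cite[Corollaire~3]{Luna} then says that a point whose stabilizer $H$ is reductive with $N(H)/H$ finite has closed orbit. Since $\pf\neq 0$ has closed $\SL(15)$-orbit in $\Sym^3(\bw2V_6^\vee)$, the orbit avoids $0$, so $[\pf]$ is polystable.

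The main obstacle is to make the application of Luna's result precise. Luna's statement concerns the vector $\pf$ in the affine cone and requires the stabilizer of the point itself, not of the line. This is handled by noting that $\SL(V_6)$ fixes $\pf$ exactly, since the determinant is $1$. It also requires checking that the full stabilizer of $\pf$ is reductive with identity component $H$. For this I would invoke the description of $\Aut(G)$ and the fact that any linear map preserving $\pf$ preserves $\Sing(\Pf)=G$; the remaining finite ambiguity does not affect the finiteness of $N(H)/H$. Alternatively, one could bypass these identifications by a direct Hilbert–Mumford check using weights of $\bw2V_6$ under a maximal torus. However, the Luna route is shorter and parallels the quadric case.
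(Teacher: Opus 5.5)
Your proposal is correct and follows the paper's argument. You identify $\Stab(\pf)$ with the image of $\SL(V_6)$ via its action on $\Sing(\Pf)=\Gr(2,V_6)$. You then use that $\bw2V_6$ is irreducible and faithful to get finite index in the normalizer; this is \cite[Proposition~5.4]{Debarre-Han-OGrady-Voisin}, which you essentially reprove via Schur's lemma and finiteness of $\mathrm{Out}$. You conclude with Luna's criterion, as the paper does, and your care about the kernel $\{\pm1\}$ and about stabilizers of $\pf$ versus $[\pf]$ only makes precise the paper's informal ``$\Stab(\pf)=\SL(6)$''.
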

\begin{proof}
First, we claim that the stabilizer subgroup $\Stab(\pf)$ is $\SL(6)\subset \SL(15)$ via $\bw2 V_6$.
Clearly $\SL(6)$ is a subgroup of $\Stab(\pf)$; conversely, any element in $\SL(15)$ stabilizing $\pf$ induces an automorphism of its singular locus which is the Grassmannian $G=\Gr(2,6)$, hence must lie in $\SL(6)$.
Now since $\bw2 V_6$ is a faithful irreducible representation of $\Stab(\pf)=\SL(6)$, one can apply \cite[Proposition~5.4]{Debarre-Han-OGrady-Voisin} to obtain that $\SL(6)$ has finite index in its normalizer in $\SL(15)$, and then conclude using the result of Luna.
\end{proof}

Going back to the case of Coble type quartics for $\cM_4^{(1)}$,
by composing with the period map $\p\colon \cM_4^{(1)}\into \cP_4^{(1)}$,
and considering the Baily--Borel compactification $\overline{\cP_4^{(1)}}$ for the period domain, we get an extended period map $\tilde \p$ for Coble type quartics after resolving the indeterminacy:
\[
\begin{tikzcd}
\tilde{\Mcoble4}\ar[r,"\tilde\p"]\ar[d,"\varepsilon"] & \overline{\cP_4^{(1)}}\\
\Mcoble4\ar[r,dashrightarrow,"\p\circ \m"] & \cP_4^{(1)}\ar[u,hookrightarrow].
\end{tikzcd}
\]
\begin{definition}\label{def:HLS}
An irreducible divisor in $\cP_4^{(1)}$ (or $\cM_4^{(1)}$) is called \emph{Hassett--Looijenga--Shah} (HLS) if its closure in $\overline{\cP_4^{(1)}}$ is the image of an exceptional divisor of $\varepsilon$ by the extended period map $\tilde \p$.
\end{definition}
An equivalent formulation is the following: the modular map $\m$ is the rational inverse of the map $\overline{\coble}$, while the latter can be extended to a birational map $\cP_4^{(1)}\dasharrow \Mcoble4$, still denoted as $\overline{\coble}$ by abuse of notation.
Since $\cP_4^{(1)}$ is normal and in particular regular in codimension $1$, we can assume that $\overline{\coble}$ is defined in codimension $1$.
Then the HLS divisors are precisely those contracted by $\overline{\coble}$ to some image with higher codimension.

Using the analysis for $\cD_{12}$ in \autoref{subsec:genus-7} and the one for $\cD_4$ in \autoref{subsec:D4} below, we can deduce the following result.
\begin{theorem}
\label{thm:HLS}
The Heegner divisors $\cD_4$ and $\cD_{12}$ are HLS divisors.
\end{theorem}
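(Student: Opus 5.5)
Following the equivalent formulation given after \autoref{def:HLS}, I will show that the birational map $\overline{\coble}\colon\cP_4^{(1)}\dasharrow\Mcoble4$ contracts each of $\cD_{12}$ and $\cD_4$ to a single point. Since $\Mcoble4$ has dimension $20$, a point has codimension $20>1$, so both divisors are HLS. The argument in each case has two parts: first identify the Coble type quartic attached to a general member of the divisor, and then show that this quartic is the same up to $\SL(10)$ for every such member.

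\textbf{The divisor $\cD_{12}$.} A general point of $\cD_{12}$ is the period of $(S^{[2]},L_2-2\delta)$ with $(S,L)$ a general polarized K3 surface of genus $7$. Indeed, the lattice $\langle L_2-2\delta,\delta\rangle$ has orthogonal complement of discriminant $-12$. This polarization is ample, so these points lie in the image of $\p$, and $\overline{\coble}$ is defined there. By \autoref{prep-lemma}.(\ref{prep-lemma-2}), the unique quartic singular along $S^{[2]}$ is the double quadric $q^2$, with $q$ smooth. All smooth quadrics are $\SL(10)$-equivalent, and the proof of the semistability lemma shows that $[q^2]$ is polystable. Hence the general point of $\cD_{12}$ maps to the single point $[q^2]\in\Mcoble4$. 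I would still check that $\overline{\coble}$ is defined along $\cD_{12}$ via the family $\Sigma$, so that its value really is $[q^2]$. This follows because $q^2$ is the flat limit of the Coble quartics, by the uniqueness statement $h^0(\cI^2(4))=1$ together with semicontinuity, exactly as in the proof of \autoref{thm:coble-quartic}.

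\textbf{The divisor $\cD_4$.} Here the period point lies outside $\p(\cM_4^{(1)})$. A general point corresponds to $(S^{[2]},L_2)$ with $S\subset\bP^3$ a quartic surface, and $|L_2|$ is the Hilbert--Chow contraction onto $\Sym^2S\subset\bP(\Sym^2H^0(L)^\vee)=\bP^9$. I would argue as in \autoref{lem:D6-square6}. The quadrics containing $\Sym^2 S$ come only from $\Sym^2\bP^3$, which is the rank $\le2$ locus of symmetric $4\times4$ matrices and is cut out by the $3\times3$ minors, i.e.\ by cubics. The natural candidate for the limiting quartic is therefore the determinant quartic $\det$ of the generic symmetric $4\times4$ matrix. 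Its singular locus is the rank $\le 2$ locus $\Sym^2\bP^3$, which contains $\Sym^2 S$, and it does not depend on $S$. To show that this is the limit, I would take a one-parameter family $\cX_t\to\Sym^2S$ and argue as in \autoref{prep-lemma}: any quartic singular along $\Sym^2S$ has polar cubics in $H^0(\cI_{\Sym^2S}(3))$, and I would check that this space equals the space of $3\times3$ minors, using the factorization of $m_3$ through $\Sym^2\bP^3$ as in \autoref{lem:D6-square6}. That identification forces the quartic to be singular along all of $\Sym^2\bP^3$, and hence, up to scalar, to be $\det$. Polystability of $[\det]$ follows from Luna's criterion as in the Pfaffian lemma, since its stabilizer contains $\SL(4)$ acting irreducibly on $\Sym^2 V_4$.

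\textbf{Main obstacle.} The hardest step is making rigorous that the extension of $\overline{\coble}$ to the boundary divisor $\cD_4$ takes the value $[\det]$. One issue is that $\Sym^2S$ is singular along the diagonal, so I must verify $h^0(\cI^2_{\Sym^2S}(4))=1$ and possibly the vanishing of $h^1$, rather than only exhibit $\det$. The other issue is controlling the flat limit of $\cX_t$ in $\cH$, since the limit might acquire embedded structure along the diagonal. I would handle this by computing directly with the $\SL(4)$-representation theory of $\Sym^\bullet\Sym^2$ restricted to $S$.
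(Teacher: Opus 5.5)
Your proposal is correct and follows the paper's proof. $\cD_{12}$ is sent to $[q^2]$ by \autoref{prep-lemma}.(\ref{prep-lemma-2}), and $\cD_4$ is sent to the discriminant quartic because the factorization of $m_3$ through $\Sym^2\bP^3$ identifies the cubics through $\Sym^2 S$ with those through $\Sym^2\bP^3$, exactly as in \autoref{cor:D4-coble-unique}. The well-definedness issues you flag (polystability of the limit quartics, embedded structure in the flat limit) are left implicit in the paper. The embedded-structure worry is settled by the Hilbert-polynomial comparison from the end of the proof of \autoref{thm:coble-quartic}, since the reduced image $\Sym^2 S$ already has Hilbert polynomial $\chi(S^{[2]},L_2^{\otimes t})$. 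To pin down the limit quartic you need only the polar-cubic bound $h^0\le 1$, not any $h^1$ vanishing.
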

\begin{proof}
A general element in $\cD_{12}$ corresponds to $(S^{[2]}, L_2-2\delta)$, where $(S,L)$ is a general K3 surface of genus $7$. Recall from \autoref{prep-lemma}.(\ref{prep-lemma-2}) that, for such an element, the non-reduced quartic $Z(q^2)$ is the unique quartic hypersurface singular along $S^{[2]}$.
Therefore the map $\overline{\coble}$ contracts the entire $\cD_{12}$ to the point $[q^2]\in \Mcoble4$.

Similarly, a general member in $\cD_{4}$ is a pair $(S^{[2]}, L_2)$ where $(S,L)$ is a general quartic K3 surface. The linear system $|L_2|$ defines the Hilbert--Chow contraction $S^{[2]}\to \Sym^2 S\subset\bP^9=\bP\Sym^2 V_4$, and by \autoref{cor:D4-coble-unique} below, the discriminant quartic in $\bP^9$ is the unique quartic hypersurface singular along $\Sym^2 S$.
Hence the map $\overline{\coble}$ contracts the entire $\cD_4$ to a single point $[f_{\disc}]$ in $\Mcoble4$.
\end{proof}

\subsection{Heegner divisor \texorpdfstring{$\cD_{4}$}{D4}}
\label{subsec:D4}
The Heegner divisor $\cD_4$ lies in the complement of $\cM_4^{(1)}$ in the period domain.
Its general member is $(S^{[2]},L_2)$, where $(S,L)$ is a general quartic K3 surface.
Recall that $L_2$ is not ample but only big and nef, and the morphism
\[
\varphi_{L_2}\colon S^{[2]}\to \bP^9=\bP H^0(S^{[2]},L_2)^\vee
\]
is the Hilbert--Chow contraction $S^{[2]}\to \Sym^2S$ followed by a closed immersion.

We analyze the geometry of $(S^{[2]}, L_2)$, which is very similar to the case already studied in \autoref{lem:D6-square6}.

\begin{lemma}
\label{cor:D4-coble-unique}
Consider $(S^{[2]},L_2)$ for $(S,L)$ a general quartic K3 surface in $\bP^3=\bP H^0(L)^\vee$.
\begin{enumerate}
\item\label{cor:D4-coble-unique1} The symmetric square $\Sym^2 S$ is projectively normal in $\bP^9$.
\item The cubic equations for $\Sym^2 S$ in $\bP^9$ are the same as those for $\Sym^2\bP^3$.
\item The discriminant quartic in $\bP^9$ is the unique quartic hypersurface singular along $\Sym^2 S$.
\end{enumerate}
\end{lemma}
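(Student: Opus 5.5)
I will follow the approach of \autoref{lem:D6-square6}, with $S=Z(g)\subset\bP^3=\bP(V_4)$ a quartic surface, $V_4=H^0(L)^\vee$, and the embedding
\[
\Sym^2 S\subset\Sym^2\bP^3\subset\bP^9=\bP(\Sym^2 V_4).
\]
Here $\Sym^2\bP^3$ is the locus of symmetric $4\times4$ matrices of rank $\le2$.

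\textbf{Step 1: projective normality.} As in \autoref{lem:D6-square6}, the restriction $m_d\colon H^0(\bP^9,\cO(d))\to H^0(\Sym^2S,\cO(d))$ factors as
\[
\Sym^d\Sym^2H^0(L)\overset{m_d'}{\to}\Sym^2\Sym^dH^0(L)\xrightarrow{\Sym^2r_d}\Sym^2H^0(L^{\otimes d}).
\]
The map $m_d'$ is surjective; this is the projective normality of $\Sym^2\bP^3$, i.e.\ of the rank-$\le2$ symmetric determinantal variety. The map $r_d$ is surjective since $S$ is a hypersurface, hence $\Sym^2r_d$ is surjective. The target equals $H^0(\Sym^2S,\cO(d))$, because $H^0(S\times S,L^{d}\boxtimes L^{d})^{\mathfrak S_2}=\Sym^2H^0(L^d)$. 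This gives \eqref{cor:D4-coble-unique1}.

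\textbf{Step 2: cubics.} The kernel of $\Sym^2 r_3$ is $\ker r_3\cdot\Sym^3H^0(L)$, where $\ker r_3=(g)_3=0$ because $g$ has degree $4$. So $\Sym^2r_3$ is injective, and the cubics vanishing on $\Sym^2S$ are exactly $\ker m_3'$, the cubics vanishing on $\Sym^2\bP^3$. These are the $3\times3$ minors, $\binom{4}{3}^2$ minus symmetries, i.e.\ $20$ of them, and they are the partials of the discriminant $\det$.

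\textbf{Step 3: uniqueness.} Let $F=Z(f)$ be a quartic singular along $\Sym^2S$. Its partials are cubics vanishing on $\Sym^2S$, so by Step 2 they vanish on $\Sym^2\bP^3$. Hence $F$ is singular along $\Sym^2\bP^3$, i.e.\ $f\in H^0(\cI^2_{\Sym^2\bP^3}(4))$. It remains to show this space is spanned by $\det$, the quartic singular along the rank-$\le2$ locus. Since $\det$ does lie in it, existence is clear.

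For the bound, I would use $\GL(4)$-equivariance. The quartics on $\bP(\Sym^2V_4)$ decompose as the plethysm $\Sym^4\Sym^2 V_4^\vee=\bigoplus_{\lambda}S_\lambda V_4^\vee$ over even partitions $\lambda$ of $8$. The ideal of the rank-$\le r$ locus is spanned by the $S_\lambda$ whose column lengths $\lambda'$ exceed $r$ (de Concini--Procesi / Abeasis). For the symbolic square $I^{(2)}$ of the rank-$\le2$ locus, the known description of symbolic powers of symmetric determinantal ideals gives that in degree $4$ only $\lambda=(2,2,2,2)$, namely $\det$, has $\sum_i\max(\lambda'_i-2,0)\ge 2$. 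In addition, $\cI^2$ agrees with $\cI^{(2)}$ here because $\Sing(F)$ is described pointwise. Alternatively, I could argue directly: any such $f$ restricted to a generic diagonalizable pencil must be divisible by the square of the rank-dropping factors.

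I expect this last representation-theoretic step to be the main obstacle. As a fallback, a Macaulay2 computation of $h^0(\cI^2(4))=1$ suffices.
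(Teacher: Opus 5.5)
Your Steps 1--3 are the paper's proof: the same factorization of $m_d$ through $\Sym^2\bP^3$, injectivity of $\Sym^2 r_3$ because no cubic contains $S$, and the reduction from $\Sym^2 S$ to $\Sym^2\bP^3$ via the polar cubics. The paper stops there and quotes as known that the discriminant is the only quartic singular along the rank-$\le 2$ locus. Your extra justification of that fact is a reasonable addition, but several details in it are wrong.

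First, there are $10$ such cubics, not $20$. The adjugate of a symmetric $4\times4$ matrix has $10$ independent entries; equivalently, $\dim\Sym^3\Sym^2V_4-\dim\Sym^2\Sym^3V_4=220-210$.

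Second, the claim that $\cI^2$ agrees with $\cI^{(2)}$ is false. Near a rank-one point, $\cI$ is generated by the $2\times 2$ minors of a $3\times3$ symmetric matrix of local coordinates (the Schur complement). There $\det$ becomes the determinant of that matrix, a cubic, so it is not a local section of $\cI^2$; in fact $H^0(\cI^2(4))=0$. You do not need this identification at all: ``singular along $\Sym^2\bP^3$'' means the partials vanish there, which is exactly membership in the symbolic square (Zariski--Nagata).

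Third, your criterion as written does not single out $\det$. For $\lambda=(4,2,2)$ one has $\lambda'=(3,3,1,1)$, so $\sum_i\max(\lambda'_i-2,0)=2$ as well. The formula must be applied to $\mu$ with $\lambda=2\mu$, whose column lengths are the sizes of the minors. This gives $\mu'=(3,1)$ and sum $1$ for $(4,2,2)$, versus $\mu'=(4)$ and sum $2$ for $\det$.

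A cleaner self-contained argument runs as follows. The representation $\Sym^4\Sym^2V_4^\vee$ is multiplicity-free, and the degree-$4$ part of the ideal is $S_{4,2,2}\oplus S_{2,2,2,2}$, of dimension $84+1=715-630$. The degree-$4$ part of the symbolic square is a $\mathrm{GL}(V_4)$-stable subspace containing $\det$. It is proper, because $x_{44}\Delta_{123}$ (with $\Delta_{123}$ the principal $3\times3$ minor) has nonzero differential at the rank-$2$ point $vv^T+ww^T$, where $v=e_1+e_4$ and $w=e_2$. Hence it equals $\bC\det$.

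Finally, your pencil alternative needs adjusting. A general line misses the codimension-$3$ locus $\Sym^2\bP^3$ entirely, so the restriction gives no constraint. You would have to restrict to lines joining two rank-$2$ points, where $f$ restricts to a multiple of $s^2t^2$, or to diagonal $\bP^3$-slices.
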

\begin{proof}
The morphism $\varphi_{L_2}\colon S^{[2]}\to \bP^9=\bP(H^0(L_2)^\vee)=\bP(\Sym^2 H^0(L)^\vee)$ is the composition
\[
S^{[2]}\overset{\mathrm{HC}}{\to}\Sym^2 S\into \Sym^2 \bP^3\into \bP(\Sym^2 H^0(L)^\vee)=\bP^{9}.
\]
Factoring the restriction $H^0(\bP^{9}, \cO(d))\to H^0(\Sym^2S, \cO(d))$ through $\Sym^2 \bP^3$ we obtain
\[
m_d\colon\Sym^d\Sym^2 H^0(L)\to \Sym^2\Sym^d H^0(L) \xrightarrow{\Sym^2 r_d} \Sym^2 H^0(L^{\otimes d}).
\]
The first arrow is surjective, while the second arrow is the symmetric square of the restriction $r_d\colon \Sym^d H^0(L) \to H^0(L^{\otimes d})$ on $S\subset\bP^3$, which is an isomorphism for $d\le 3$ and surjective for $d\ge 4$.
It follows that $\Sym^2 S$ is projectively normal in $\bP^9$, and the cubic equations for $\Sym^2 S$ are the same as those for $\Sym^2\bP^3$.

Finally, for any quartic hypersurface singular along $\Sym^2 S$, its polar cubics will vanish along $\Sym^2 \bP^3$, hence it will necessarily be singular along $\Sym^2 \bP^3$ as well.
Since the discriminant quartic in $\bP^9$ is the unique quartic hypersurface singular along $\Sym^2 \bP^3$, this concludes the proof.
\end{proof}

\begin{remark}
    Note that \autoref{cor:D4-coble-unique}.(\ref{cor:D4-coble-unique1}) yields a simple proof to the fact that a general $(X,H)\in\cM_4^{(1)}$ is projectively normal, which was previously established in \cite[Section~4.7]{projective-models}.
\end{remark}

Similarly to the case of $\cD_{12}$, to recover the fourfold $\Sym^2S$ we will need not just the degenerate Coble type quartic, but also a tangential quartic section of the singular locus $\Sym^2 \bP^3$.
We quickly comment on its geometry.

The symmetric square $\Sym^2 \bP^3$ admits a resolution by $\bP_G\coloneqq\bP_G(\Sym^2 \cU)$ where $G\coloneqq \Gr(2, V_4)$ and $\cU$ is the tautological subbundle of rank $2$.
For a line $\ell$ in $\bP^3$, the fiber of the projection map $\pi\colon\bP_G\to G$ at $\ell$ is precisely the symmetric square $\Sym^2\ell\cong\bP^2$.
We denote by $\zeta$ the relative $\cO(1)$ of $\pi$.

Now let $(S,L)$ be a general quartic surface in $\bP^3=\bP(V_4)$ that contains no lines.
$S^{[2]}$ can be embedded in $\bP_G$ as a $6:1$ covering over $G$: fiberwise, for a general line $\ell$ in $\bP^3$ intersecting $S$ along $4$ distinguished points $\set{p_i}_{1\le i\le 4}$,
we obtain $6=\binom{4}{2}$ points in the fiber $\pi^{-1}(\ell)\cong\Sym^2\ell$.
Moreover, these $6$ points are the intersection of the four lines $L_i\coloneqq\setmid{\xi\subset\Sym^2\ell}{p_i\in \Supp\xi}$.
Globalizing this, we get a hypersurface $Y$ in $\bP_G$ of class $4\zeta$ that is singular along $S^{[2]}$, and indeed provides a quartic section after it is mapped to $\Sym^2 \bP^3$ via the linear system $|\cO(\zeta)|$.

\section{Speculations}
\label{sec:speculations}
\subsection{Global geometry of the moduli space}
We formulate some speculations on the moduli spaces $\Mcoble4$ and $\cM_4^{(1)}$, based on analogy with the cubic fourfolds \cite{Voisin:torelli,Hassett,Looijenga,Laza:moduli-via-period}, the double EPW sextics \cite{OGrady:taxonomy,OGrady:moduli-EPW}, and the Debarre--Voisin fourfolds \cite{Debarre-Han-OGrady-Voisin,Oberdieck,Song:special-DV}.

\begin{conjecture}
\label{conj:HLS}
\leavevmode
\begin{enumerate}
\item The Heegner divisors $\cD_2$, $\cD_6$, and $\cD_8$ are also HLS divisors.
\item\label{conj:HLS-2d} The Heegner divisors $\cD_{d}$ for $d\ge 16$ are not HLS.
\end{enumerate}
\end{conjecture}
The first part should follow once we examine the examples in \autoref{subsec:menagerie}.
For the second part, see \autoref{rmk:special-example} below.

\begin{conjecture}
\label{conj:smoothness}
Recall that $\Sigma^\circ\subset |\cO_{\bP^9}(4)|$ is the locus of Coble type quartics that are singular exactly along a \emph{smooth} hyperkähler fourfold $\in \cM_4^{(1)}$, and $\Sigma$ is its closure.
\begin{enumerate}
\item\label{conj:smoothness-divisorial} The complement $\Sigma\setminus \Sigma^\circ$ is a (reducible) divisor.
\item\label{conj:smoothness-two-comps} More precisely, the complement $\Sigma\setminus \Sigma^\circ$ consists of two irreducible divisors $\Sigma_{16}$ and $\Sigma_{20}$.
In other words, there are two $\SL(10)$ invariants $\Delta_{16}$ and $\Delta_{20}$ such that for $[f_4]\in \Sigma$, the singular locus of the quartic hypersurface $Z(f_4)$ is a smooth hyperkähler fourfold $\in\cM_4^{(1)}$ if and only if $\Delta_{16}(f_4)\ne 0$ and $\Delta_{20}(f_4)\ne 0$.
\end{enumerate}
\end{conjecture}
We note that these divisors are the analogue of singular cubic fourfolds (where a general member is a nodal cubic).
In this way, we get two irreducible divisors $\Mcoble{4,16}$ and $\Mcoble{4,20}$ in $\Mcoble4$ that dominate the divisors $\cD_{16}$ and $\cD_{20}$ respectively via the extended period map $\tilde\p$ (so these are indeed not HLS).
\begin{conjecture}
\label{conj:stablility}
Any $[f_4]\in\Sigma^\circ$ is stable for the action of $\SL(10)$.
In other words, we have an inclusion $\Sigma^\circ\subset \Sigma^s$, where $\Sigma^s$ is the stable locus.
\end{conjecture}
This would imply that the moduli space $\Mcoble4$ contains the following open loci
\[
\Mcoble4^\circ\coloneqq \Sigma^\circ / \SL(10) \subset \Mcoble4^s\coloneqq \Sigma^s / \SL(10),
\]
both being geometric quotients.
Then one can see immediately that the regular locus of $\m$ contains the open locus $\Mcoble4^\circ$ and induces an open immersion
\[
\m\colon \Mcoble4^\circ \into \cM_4^{(1)}
\]
by the Zariski's main theorem.
\begin{conjecture}
\label{conj:image}
Assuming \autoref{conj:stablility}, the complement of $\Mcoble4^\circ$ in $\cM_4^{(1)}$ is the union of the four HLS Heegner divisors $\cD_2$, $\cD_6$, $\cD_8$, and $\cD_{12}$.
(Recall that the complement of $\cM_4^{(1)}$ in $\cP_4^{(1)}$ is the union of three Heegner divisors $\cD_4$, $\cD_{16}$, and $\cD_{20}$.)
\end{conjecture}

Now we establish some partial results and implications.
\begin{proposition}
\autoref{conj:smoothness}.(\ref{conj:smoothness-divisorial}) implies \autoref{conj:stablility}.
\end{proposition}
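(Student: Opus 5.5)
The plan is to prove stability by producing, for each $[f_4]\in\Sigma^\circ$, an $\SL(10)$-invariant section avoiding $[f_4]$, and then to check that the stabilizer is finite and the orbit closed in the semistable locus. Granting \autoref{conj:smoothness}.(\ref{conj:smoothness-divisorial}), the complement $\Sigma\setminus\Sigma^\circ$ is an $\SL(10)$-invariant Weil divisor on $\Sigma$. Some multiple of it should be cut out, at least set-theoretically, by an invariant hypersurface section $\Delta\in H^0(\Sigma,\cO(k))^{\SL(10)}$. The model I have in mind is a discriminant-type argument, as in the proof that smooth hypersurfaces are stable. We already know that points of $\Sigma^\circ$ are semistable (the preceding lemma). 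By construction $\Sigma^\circ$ is the non-vanishing locus of $\Delta$, so it is an invariant affine open subset of the semistable locus $\Sigma^{ss}$.

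Next I would show that stabilizers are finite. Take $g\in\SL(10)$ fixing $[f_4]\in\Sigma^\circ$. It preserves $\Sing Z(f_4)=X$ together with its hyperplane class, so it acts on $(X,H)$ through $\Aut(X,H)$. Automorphisms of a polarized hyperkähler manifold form a finite group, since $H^0(X,\cT_X)=0$. Moreover, the representation of $\Aut(X,H)$ on $H^0(X,H)$ determines $g$ up to scalars, because $X$ is linearly normal and nondegenerate. Hence $\Stab([f_4])$ is finite.

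It remains to show the orbit is closed in $\Sigma^{ss}$. Since $\Sigma^\circ=\{\Delta\neq0\}$ is affine and invariant, and every stabilizer on it is finite, all orbits in $\Sigma^\circ$ have the same dimension, namely $\dim\SL(10)$. An orbit of maximal dimension in an affine $G$-variety is closed, because its boundary would consist of orbits of strictly smaller dimension lying in $\Sigma^\circ$, which do not exist. So orbits are closed in $\Sigma^\circ$ and therefore in $\Sigma^{ss}$, since $\Sigma^\circ$ is a saturated open subset, and $[f_4]$ is stable.

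The main obstacle is the first step: passing from ``invariant divisor'' to ``$\Sigma^\circ$ is the non-vanishing locus of an invariant section''. Two issues arise. First, $\Sigma$ may not be normal or factorial, so the divisor need not be $\bQ$-Cartier. Second, even when it is, the corresponding section must be invariant rather than merely semi-invariant; this is automatic, since $\SL(10)$ has no nontrivial characters. To handle the first issue I would try to work on $\cM_4^{(1)}$ and the GIT quotient, as in the proof of the previous lemma. The quasiprojective moduli space is normal, with $\bQ$-factorial period domain, so the image of the boundary divisor is $\bQ$-Cartier there. Pulling back, one gets an invariant function whose zero set is exactly $\Sigma\setminus\Sigma^\circ$. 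If that route fails, I would instead verify directly that every orbit in $\Sigma^\circ$ is closed in $\Sigma^{ss}$. The argument would be that a degeneration $\lim_{t\to0}\lambda(t)\cdot f_4$ to a different orbit would yield a quartic whose singular locus is a flat limit of copies of $X$. Such a limit lies in $\Sigma\setminus\Sigma^\circ$, which contradicts semistability combined with the divisoriality of that complement.
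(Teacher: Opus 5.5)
Your argument is the paper's. Stabilizers are finite via the homomorphism $\Stab(f_4)\to\Aut(X)$, which, as the paper remarks, needs no hypothesis. Orbits are closed via an $\SL(10)$-invariant discriminant $\Delta$ whose non-vanishing locus is $\Sigma^\circ$, as in \cite[Proposition~4.2]{Mumford-Fogarty-Kirwan}. Your last step says that all orbits in the saturated open set $\{\Delta\neq 0\}$ have dimension $\dim\SL(10)$, hence are closed there and so in $\Sigma^{ss}$. This is a rephrasing of the paper's step: a smaller orbit in the closure of the affine orbit would have infinite stabilizer, hence lie outside $\Sigma^\circ$, while $\Delta$ is constant on the orbit closure.

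The step you flag as the main obstacle is not proved in the paper either. The paper reads \autoref{conj:smoothness}.(\ref{conj:smoothness-divisorial}) as saying that $\Sigma\setminus\Sigma^\circ$ is cut out by an invariant $\Delta$ (compare part (\ref{conj:smoothness-two-comps}), formulated via $\Delta_{16}$ and $\Delta_{20}$). Under that reading your proof is complete.

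Your proposed repairs, however, would not work. The map to $\cM_4^{(1)}$ is only defined on $\Sigma^\circ$. The boundary is conjecturally sent to $\cD_{16}\cup\cD_{20}$, outside $\cM_4^{(1)}$, or is indeterminacy, as at $[q^2]$, onto which all of $\cD_{12}$ is contracted. Moreover, pulling back a local $\bQ$-Cartier equation along a rational map to the period domain gives neither a global section nor a section of $\cO_\Sigma(k)$, which is what GIT needs.

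The fallback gives no contradiction at all. $\Sigma\setminus\Sigma^\circ$ contains polystable points such as $[q^2]$. So an orbit in $\Sigma^\circ$ degenerating into the boundary is exactly what strict semistability would look like. Only an invariant vanishing on the boundary but not at $[f_4]$ excludes it.
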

\begin{proof}
Assuming \autoref{conj:smoothness}.(\ref{conj:smoothness-divisorial}), we will show that any $[f_4]\in \Sigma^\circ$ is stable, in other words, it admits a finite stabilizer subgroup $\Stab(f_4)$ and its affine orbit $O(f_4)$ in $H^0(\cO_{\bP^9}(4))$ is closed.

We first prove the first part, which holds unconditionally.
By assumption, the singular locus of the quartic hypersurface $Z(f_4)$ defines a hyperkähler fourfold $(X,H)\in \cM_4^{(1)}$.
Any non-trivial projective transformation stablizing the quartic will induce a non-trivial automorphism of $X$.
In other words, the natural homomorphism
\[
\Phi\colon \Stab(f_4) \to \Aut(X)
\]
has kernel $\setmid{\lambda \Id}{\lambda^{10}=1}$.
The automorphism group $\Aut(X)$ is discrete since $X$ is hyperkähler, thus so is $\Stab(f_4)$.
The latter, being an algebraic subgroup of $\SL(10)$, must necessarily be finite.

For the second part, assuming \autoref{conj:smoothness}.(\ref{conj:smoothness-divisorial}), then $\Sigma^\circ$ is the complement in $\Sigma$ of a (reducible) divisor given by an $\SL(10)$ invariant $\Delta$ (which, assuming \autoref{conj:smoothness}.(\ref{conj:smoothness-two-comps}), should be the product $\Delta_{16}\cdot \Delta_{20}$).
The existence of such a \emph{discriminant} allows one to conclude following the same argument as in the case of hypersurfaces \cite[Proposition~4.2]{Mumford-Fogarty-Kirwan}.
More concretely, suppose that the affine orbit $O(f_4)$ is not closed so its closure $\overline{O(f_4)}$ contains another orbit $O(f_4')$, then the latter must have higher codimension and therefore an infinite stabilizer, so it must be contained in the complement of $\Sigma^\circ$.
Thus we have $\Delta(f_4')=\Delta(f_4)=0$, a contradiction.
\end{proof}

\begin{remark}
\label{rmk:special-example}
We note that the smoothness criterion in \autoref{conj:smoothness} would also imply \autoref{conj:HLS}.(\ref{conj:HLS-2d}),
by following the same strategy as in \cite{Song:special-DV} and exhibiting some smooth examples with high Picard rank lying on these Heegner divisors; we already have some candidates, but the tricky part is to establish smoothness.
Hence \autoref{conj:smoothness} is really somewhat central, but very likely depends on the knowledge of an explicit parametrization.
This also highly motivates the study of the two Heegner divisors $\cD_{16}$ and $\cD_{20}$.

Alternatively, to show \autoref{conj:HLS}.(\ref{conj:HLS-2d}) one could attempt to compute the Noether--Lefschetz generating function, following the strategy in \cite{Oberdieck}.

On the other hand, \autoref{conj:image} would be the hardest one to establish, since it is a very strong statement on the global geometry of the GIT moduli space $\Mcoble4$.
\end{remark}

\subsection{Menagerie}
\label{subsec:menagerie}
We list some examples of degenerate Coble type quartics.
They all admit a singular locus with excessive dimension and have a Lie group symmetry.

The following two examples have already been studied.
\begin{itemize}
\item $\cD_{12}$: $f_4$ is the non-reduced quartic $q^2$, and its singular locus is the $8$-dimensional quadric.
It admits an $\SO(10)$-symmetry and is the unique $\SO(10)$-invariant quartic in $\Sym^4 V_{10}^\vee$, where $V_{10}$ is the standard representation.
\item $\cD_{4}$: $f_4$ is the discriminant quartic for symmetric $2$-tensors on a $\bP^3$ and is singular along the rank-$2$ locus which is a $\Sym^2\bP^3$.
It admits an $\SL(4)$-symmetry and is the unique $\SL(4)$-invariant quartic in $\Sym^4 V_{10}^\vee$, where $V_{10}=\Sym^2 V_4$.
\end{itemize}

Moreover, we have the following examples that we expect to be the degenerate Coble type quartic for the corresponding Heegner divisor (which would then be HLS following the same proof as \autoref{thm:HLS}).
\begin{itemize}
\item $\cD_{6}$: We expect an $\SO(5)$-symmetry.
We consider $\Sym^4 V_{10}^\vee$ where $V_{10}=\bw2 V_5$ and $V_5$ is the standard representation.
We obtain a pencil of $\SO(5)$-invariant quartics, and
there exists a distinguished member in the pencil whose singular locus contains $\Gr(2,V_5)$.
Recall that a general member in $\cD_6$ is given by $(S^{[2]},L_2-\delta)$ for a general K3 surface $(S,L)$ of genus $4$, mapped birationally onto a non-normal fourfold in $\Gr(2,V_5)$ \cite[Example~3.11]{Debarre:survey}.

\item $\cD_{8}$: We expect an $\SL(3)\times \SL(3)$-symmetry.
We consider $\Sym^4 V_{10}^\vee$ where $V_{10}=(V_3\otimes V_3)\oplus V_1$ and $V_3$ is the standard representation of $\SL(3)$.
We obtain a pencil of $\SL(3)\times \SL(3)$-invariant quartics.
One distinguished member in the pencil is reducible with singular locus containing a cone $C(\bP^2\times \bP^2)$.
By \cite[Theorem~1.1]{Iliev-Kapustka-Kapustka-Ranestad}, a general member $(X,H)\in\cD_8$ is hyperelliptic: $X$ is mapped two-to-one onto a quartic section of $C(\bP^2\times \bP^2)$, a so-called \emph{EPW quartic}.
One would expect that the Coble type quartic degenerates to the above one, while the second quartic is recovered by the tangential quartic.

\item $\cD_2$: We expect an $\SL(3)$-symmetry.
We consider $\Sym^4 V_{10}^\vee$ where $V_{10}=\Sym^3 V_3$ and $V_3$ is the standard representation of $\SL(3)$.
There is a unique $\SL(3)$-invariant quartic, the \emph{Aronhold quartic} $f_4$, a classical invariant for tenary cubic forms.
The singular locus of $f_4$ is a fivefold, containing a $\bP^2\times\bP^2$ embedded with bidegree $(2,1)$ of degree~$24$ (see \cite[Section~2.2]{Eisenbud-Harris}).
Since a general member $(X,H)$ of $\cD_2$ is latticed polarized with $\NS(X)=\bZ e\oplus \bZ f$ and Gram matrix $\begin{psmallmatrix}0&1\\1&0\end{psmallmatrix}$, where the polarization $H$ is $2e+f$,
we expect this $\bP^2\times\bP^2$ to be the hyperelliptic image of $X$.
\end{itemize}

Finally, we also briefly describe the picture for the Heegner divisors $\cD_{16}$ and $\cD_{20}$.

\begin{itemize}
\item For a general $(X,H)\in \cD_{16}$, recall that $H$ contracts a prime exceptional divisor $D$ with square $-2$ and divisibility $1$ to a quartic K3 surface $S\subset \bP^3$, giving a singular model $\overline{X}\subset \bP^9$.
We expect the scheme-theoretical singular locus of the corresponding Coble type quartic $Y$ to be $\overline{X}\cup\bP^3$.

Moreover, if one assumes that $X$ has Picard rank $2$ with $\NS(X)=\bZ H\oplus \bZ D$, then the contraction $D\to S$ induces a non-trivial Brauer class $\beta$ on $S$.

On the other hand, van Geemen--Kapustka \cite{van-Geemen-Kapustka} studied the geometry of the linear system $|H-D|$ that is ample of square $2$ and realizes $(X,H-D)$ as a double EPW sextic $X\xrightarrow{2:1}Y_6\subset\bP^5$, belonging to the Heegner divisor $\cD_{2,16}^{(1)}\subset \cM_2^{(1)}$.
Moreover, the divisor $D$ is mapped birationally onto a quadric section $Y_6\cap Q$.

We will investigate, in an upcoming work, how the Coble type quartic $Y$ provides a unified perspective relating these pictures, namely, the projective model of a twisted quartic K3 surface $(S,\beta)$ and a Noether--Lefschetz special EPW sextic $Y_6$ together with a distinguished quadric $Q$ in $\bP^5$.
\item For a general $(X,H)\in\cD_{20}$, recall that $H$ contracts a Lagrangian $\bP^2\subset X$ to a point $p$, giving a singular model $\overline{X}\subset\bP^9$.
Based on computer experiments, we expect the scheme-theoretical singular locus of the corresponding Coble type quartic $Y$ to be $\overline{X}$ with an embedded point at $p$.
\end{itemize}

\bibliography{refs}
\bibliographystyle{amsalpha}

\end{document}